\documentclass[opre,nonblindrev]{informs3NoHeading} 

\OneAndAHalfSpacedXI 


\usepackage{endnotes}
\let\footnote=\endnote

%

\newcommand{\Ex}{\mathrm{E}}
\usepackage{mathtools}
\usepackage{diagbox}
\usepackage{mathrsfs}
\usepackage{bbm,tabularx}
\usepackage{amsmath,adjustbox,booktabs}
\usepackage{graphics,dsfont}
\usepackage{mathtools}
\DeclarePairedDelimiter\ceil{\lceil}{\rceil}

\usepackage{natbib}
\bibpunct[, ]{(}{)}{,}{a}{}{,}%
%
%
%
%
%

\TheoremsNumberedThrough     
\ECRepeatTheorems

\EquationsNumberedThrough    

\newcommand{\rr}[1]{\textcolor{black}{#1}}

\begin{document}
	
	
	\RUNAUTHOR{Authors' names blinded}
	
	\RUNTITLE{Posted Price versus Auction Mechanisms}
	
	\TITLE{Posted Price versus \rr{Hybrid} Mechanisms in Freight Transportation Marketplaces}
	
	\ARTICLEAUTHORS{%
            \AUTHOR{Ruoran Chen}
		\AFF{School of Economics and Management, Southwest Jiaotong University, Chengdu, China, \EMAIL{chenrr@swjtu.edu.cn}} 
		\AUTHOR{Sungwoo Kim}
		\AFF{DoorDash, New York, NY,  \EMAIL{asdf1358@gmail.com}} 
	\AUTHOR{He Wang}
	\AFF{School of Industrial and Systems Engineering, Georgia Institute of Technology, GA 30332, \EMAIL{he.wang@isye.gatech.edu}}
	\AUTHOR{Xuan Wang}
	\AFF{Department of Information Systems, Business Statistics and Operations Management, Hong Kong University of Science and Technology, Clear Water Bay, Kowloon, Hong Kong, \EMAIL{xuanwang@ust.hk}}
} 

\ABSTRACT{%
	We consider a freight platform that serves as an intermediary between shippers and carriers in a truckload transportation network. The platform's objective is to design a policy that determines prices for shippers and payments to carriers, as well as how carriers are matched to loads to be transported, to maximize its long-run average profit. \rr{We propose a two-stage decision framework to model carriers' load choice behavior, where carriers choose a lane according to the multinomial logit (MNL) model based on the platform's posted price in the first stage and book a load in the second stage. We analyze two types of carrier-side mechanisms commonly used by freight platforms: a posted price mechanism and a hybrid mechanism where carriers can either book loads at posted price or submit their bids in an auction}. The proposed mechanisms are constructed using a fluid approximation model to incorporate carrier interactions in the freight network. We show that \rr{the hybrid mechanism has higher profits than the posted price mechanism}. We prove tight bounds between these mechanisms for varying market sizes. The findings are validated through a numerical simulation using industry data from the U.S. freight market.

}%


\KEYWORDS{online platforms, revenue management, mechanism design, freight transportation} 

\maketitle

%


\maketitle

%

\section{Introduction}

The trucking industry transports 73\% of freight by weight in the U.S.\ and generates a gross revenue of around \$987 billion annually \citep{ATA2023}. However, this large industry is characterized by high market fragmentation. 
According to the U.S. Department of Transportation, there are over 577,000 active motor carriers in the United States, of which 95.5\% operate 10 or fewer trucks \citep{ATA2023}.
Traditionally, the market relies on brokers and freight forwarders to connect shippers and carriers using phones or emails, a process that is time-consuming and labor-intensive.
In recent years, digital freight marketplaces have grown rapidly. These digital platforms allow shippers and carriers to list and book loads through smartphone apps or websites in an automated process that is significantly more efficient than traditional brokers.

A central question faced by operators of digital freight platforms is how to design mechanisms for participants in their marketplaces. 
Unlike other types of transportation marketplaces such as ride-sharing, where matching decisions are made by the platform operator in a centralized manner, most freight platforms allow carriers to browse any open loads and choose which loads they want to transport. The decentralized matching scheme is a result of demand and supply heterogeneity in freight transportation, as carriers have their specific preferences for load features including cargo type, trailer type, length of haul, etc.
A variety of market mechanisms are currently being used in practice to match loads to carriers: 
Some freight platforms set posted prices for loads; some use auctions that allow carriers to bid their own prices for loads; others apply hybrid (or dual-channel) mechanisms that allow carriers to choose between posted price and bidding \citep[e.g.][]{uber2020, convoy2023}.
For example, Figure \ref{Figure: Uber} illustrates the carrier app of a freight marketplace that uses such a hybrid mechanism. The platform displays information of available loads to carriers, such as the origin and destination of a load, distance, weight, pickup and drop-off time, as well as a posted price set by the platform (i.e., the expected payment to carriers). If a carrier wants to book a load, they can either book it instantly at the displayed price or submit a bid in order to receive possibly higher payments. However, booking a load through bidding requires the carrier to wait for the platform's assignment decision, during which the carrier may be outbid by others.
\begin{figure}[!htb]
	\centering
	\caption{An example of freight brokerage apps for carriers (source: Uber freight).    \label{Figure: Uber}}
	\vspace{12pt}
	\begin{minipage}{0.3\textwidth}
		\includegraphics[width=\textwidth]{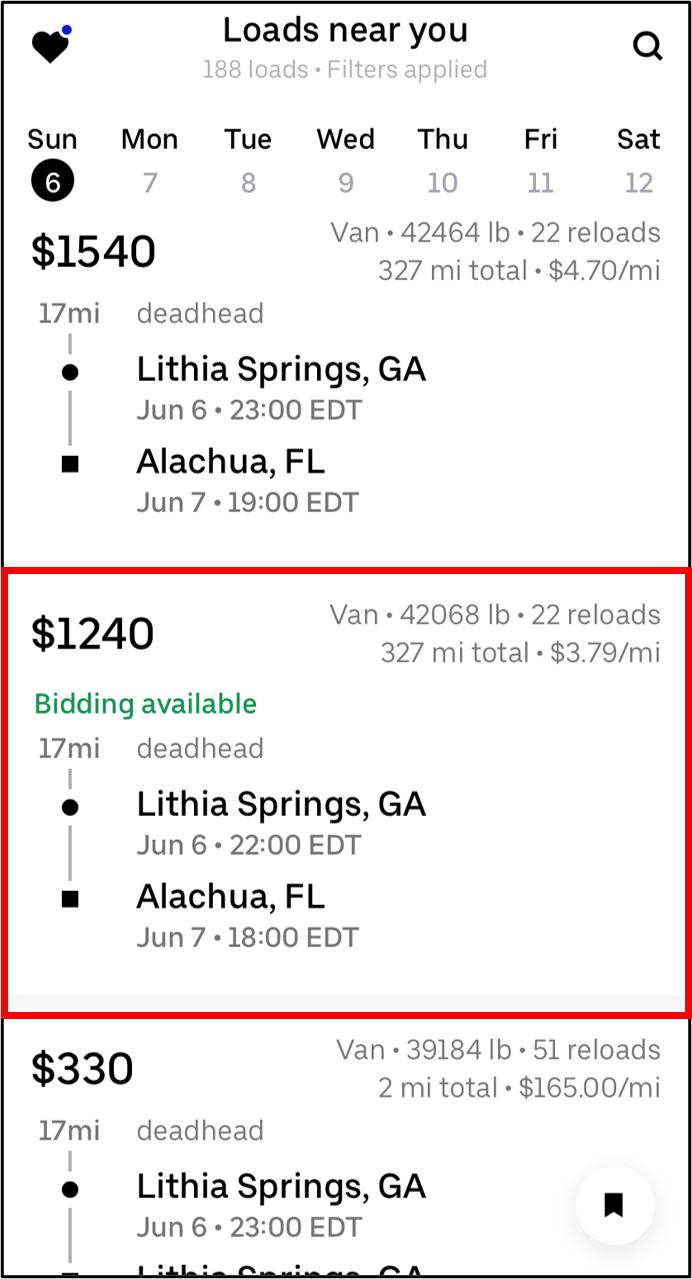}
	\end{minipage}
	\hspace{30pt}
	\begin{minipage}{0.3\textwidth}
		\includegraphics[width=\textwidth]{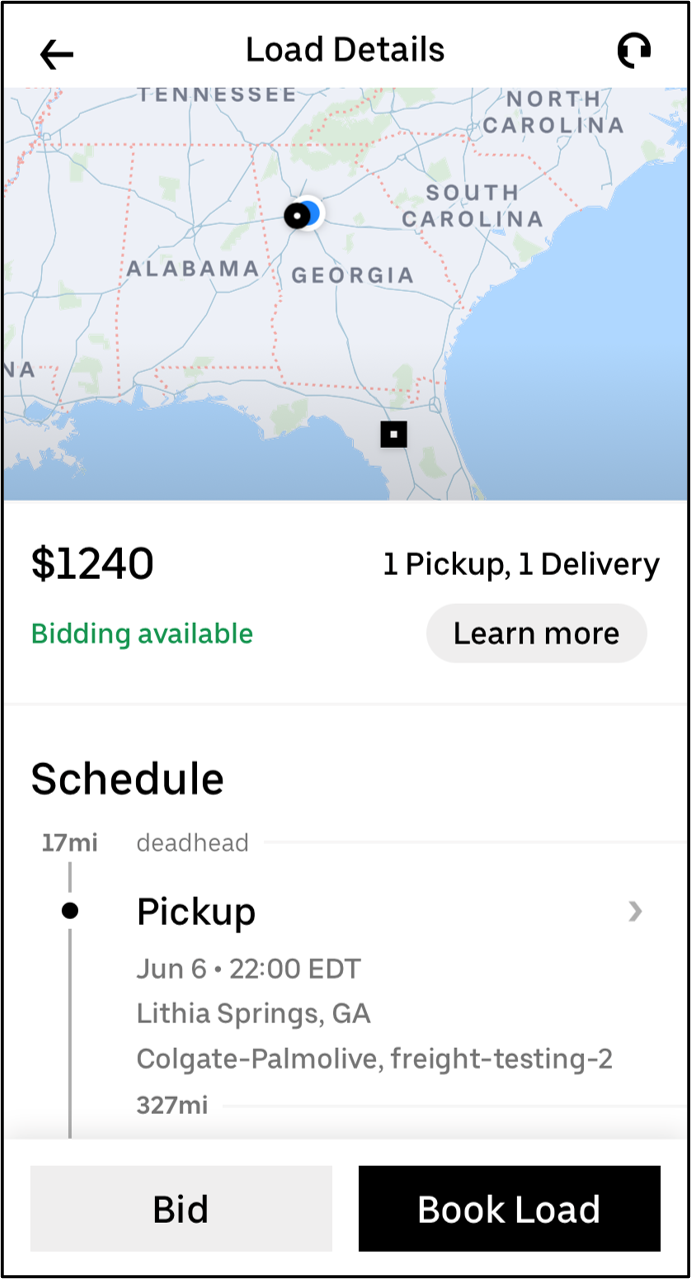}
	\end{minipage}
	\begin{minipage}{1\textwidth}
		\vspace{12pt}
		{\footnotesize \it Note: The left screen shows a list of open loads available to carriers for booking. The right screen shows a load being selected that displays two booking options: instant booking at the posted price (bottom right of the screen) or bidding (bottom left of the screen).}
	\end{minipage}
\end{figure}

The goal of this paper is to
analyze different mechanisms used by freight marketplaces and understand their implications on market efficiency and platform profitability. 
Moreover, we aim to understand the motivation for freight platforms to use hybrid mechanisms.
To formulate the mechanism design problem, we consider a stochastic model for a two-sided freight marketplace. The market operator observes the movement of loads and carriers in an interconnected freight transportation network.
We assume that the behaviors of carriers are characterized by a random utility model, and carriers' private information is their
true opportunity costs to transport a given load.
By analyzing the decisions of carriers, we consider \rr{different types of mechanisms mentioned earlier}: posted price \rr{and hybrid (dual-channel that combines both posted price and auction) mechanisms}.

\subsection{Contributions}
The main contributions of this paper are summarized as follows.

\begin{itemize}
	\item \textbf{Freight marketplace model:} We formulate a two-sided freight marketplace as a discrete-time infinite-horizon Markov decision process (MDP). The model is defined over a transportation network where shipper demand and carrier supply arrive dynamically. We assume that all loads are available for one period and unmatched loads expire at the end of the period.
	Meanwhile, carriers may stay in the marketplace for multiple periods. Carriers choose a load based on a two-stage decision framework. In the first stage, carriers choose a lane according to the multinomial logit (MNL) model based on the prices posted by the platform. Then in the second stage, carriers decide whether to book a load on the chosen lane instantly by accepting the posted price or submit a bid for a load on the chosen lane (if an auction option is provided by the platform).
	A fluid approximation model of the MDP is formulated by replacing random variables with their means. We show that the fluid approximation model can be reformulated as a convex optimization problem. In addition, the optimal objective value of the fluid model serves as an upper bound benchmark of \emph{any} stable, incentive compatible, and individually rational dynamic mechanisms. 
    
	\item \textbf{Mechanism design and theoretical analysis:} Using the fluid model, we construct several mechanisms and compare their relative performance in terms of long-run average profit.
	We consider a static posted price with fixed rates on each edge of the network.
	We also consider a hybrid mechanism that allows the carriers to self-select between a uniform price auction and the static posted price policy.
	We show that the expected profit of the hybrid mechanism is always higher than that of the static posted price mechanism.
	Both mechanisms are asymptotically optimal as both supply and demand scale up in the system. 
	\rr{Furthermore, we provide tight bounds between the posted price mechanism and the hybrid mechanism as a function of the scaling factor.}

	\item \textbf{Numerical experiment:} To evaluate the numerical performance of the proposed mechanisms, we conduct a simulation using real data obtained from the U.S.\ freight market. We consider a freight network consisting of the lower 48 states, where each state is treated as a node in the network. We simulated the performance of the posted price and hybrid mechanisms under different load volume assumptions. In general, the hybrid (dual-channel) mechanism has around 7-17\% lower carrier costs than the posted price mechanism.  Under the hybrid mechanism, over 87\% of carriers choose the posted price option, and this percentage is even higher as the load volume of the platform increases. This implies that most carriers
can confirm a booking instantly and do not have to wait for load confirmation under the hybrid
mechanism.	Finally, the hybrid mechanism has a higher market efficiency, represented by fewer unmatched loads.  
\end{itemize}

\subsection{Literature Review}


\textbf{Mechanism design.}
The seminal result of \cite{Myerson_1981} characterized the optimal revenue maximizing mechanism for single-parameter mechanism design problems. 
However, the optimal revenue maximizing mechanism for general multi-item or multi-period problems can be very complex and may not be practically appealing. A stream of literature has studied the performance of simple-to-implement mechanisms such as sequential posted-price mechanisms or first/second-price auctions \citep{edelman2007internet, chawla2010multi, dutting2016revenue, jin2020tight, balseiro2019dynamic}; most of these papers are based on applications such as e-commerce and online advertising. 
Several papers have compared posted price and auction mechanisms \citep{Wang_1993, vulcano2002optimal, einav2018auctions}. 
Our contribution is that we consider auction design in a multi-location network setting where the number of bidders in each location is endogenous and depends on previous decisions from other locations.


There is a stream of literature considering \emph{hybrid} or \emph{dual-channel} mechanisms that offer both posted price and auction at the same time. 
\cite{Etzion_etal_2006} studied a problem of optimally selecting and designing auction and posted price mechanisms in parallel. 
\cite{caldentey2007online} extended the work of \cite{Etzion_etal_2006} and considered dual channel problems with two variants, i.e., the external posted-price channel or the combined posted-price and auction channels. 
\cite{osadchiy2010selling, board2016revenue} considered selling goods to strategic buyers using a hybrid mechanism that consists of a dynamic pricing mechanism during the season and an auction for end-of-season clearing sales. 
\cite{cohen2022best} considered designing dual-channel mechanisms for online advertising. Although the idea of a hybrid mechanism is not new, our paper differs from this stream of literature by studying hybrid mechanisms in a network setting. In our model, the number of bidders for a lane in each period is influenced not only by the arrival, movement, and departure of carriers in the network but also by the platform's pricing decisions for different lanes in the transportation network. Additionally, carriers' choices of lanes, their bidding behavior, and potential arrivals and departures will also influence the platform's pricing decisions in the subsequent period. Therefore, the interplay between network dynamics, pricing, and auction design across multiple lanes over multiple periods poses technical challenges that necessitate the development of new methodologies and analysis. 



\textbf{Pricing in two-sided platforms.} Our work is also related to the stream of work on two-sided platforms, especially those on ride-sharing. Several recent papers have examined the dynamic matching and pricing problem of two-sided platforms (e.g., \citealt{Banerjee2016}, \citealt{Cachon_etal_2017}, \citealt{Ozkan_Ward_2020}, \citealt{Chen_and_Hu_2020}, \citealt{varma2020dynamic}, \citealt{Banerjee_etal_2021}, \citealt{Feng_etal_2021}, \citealt{Hu_Zhou_2021}, \citealt{Balseiro_etal_2021}, \citealt{Aveklouris_etal_2021}). In view of this literature, our paper makes some modeling assumptions that reflect distinct features of freight platforms.
First, the majority of papers consider centralized matching control, which is common practice in ride-sharing; in other words, it is the platform operator who makes matching decisions for participants in the platform. In contrast, most freight platforms use decentralized matching schemes where carriers are free to choose any available loads, so we assume a setting with decentralized control in this paper. 
Second, the travel time of a long-haul trip in freight transportation is much longer (up to several days) than a ride-sharing trip,
so we assume an open network where some carriers leave the platform at the end of each period and new carriers may arrive, rather than using a closed network assumed in some of the ride-sharing papers.
Third, our paper studies auction design in hybrid mechanisms, which are commonly used in freight transportation but are less used in ride-sharing.






Closer to our work are the ride-sharing papers that study pricing with spatial considerations. \cite{Bimpikis2019} consider the spatial transition of a ride-sharing network and characterize the value of spatial price discrimination. The authors show that a fixed-commission rate pricing policy could result in significant profit loss if demand is not “balanced” across locations. \citet{Guda_Subramanian_2019}
consider a two-location two-period setting and show the benefits of using surge pricing when the supply is strategic. \citet{Afeche2018} examine the impact of platforms' demand-side admission control and supply-side repositioning control on system performance in a two-location network. In related work, \citet{Besbes_etal_MS_2021} study the optimal spatial pricing strategy in a static equilibrium model and examine how the platform should respond to short-term supply-demand imbalances given that the supply units are strategic. 






\textbf{Freight marketplaces.}
\cite{andres2003framework} proposed a model for freight marketplaces using auction mechanisms and applied Vickery auctions in a simulation study.
In subsequent work, \cite{figliozzi2005impacts} compared three different sequential auctions numerically.
However, the two papers above assume that auctions are run by shippers rather than some freight brokerage platform.
\cite{caplice2007electronic} gave a survey of auctions in truckload transportation marketplaces.
\cite{topaloglu2007incorporating} proposed an optimization model for a carrier's fleet by integrating shipper pricing and load assignment decisions.
\cite{cao2020dynamic} considered a dynamic pricing problem for freight marketplaces that is closely related to this paper.
The authors proposed a static pricing policy by setting a fixed price for each lead time. There are several key differences between \cite{cao2020dynamic} and this paper.
Their work focused on posted price mechanisms, whereas our paper focuses on comparing posted price mechanisms with hybrid mechanisms. Moreover, \cite{cao2020dynamic} considered individual lanes and thus did not consider a network setting, whereas our paper explicitly models the interactions of carriers in the freight network. 




\section{Model}  \label{section: model}
We formulate the marketplace design problem for a freight transportation platform that serves both shippers (the demand side) and carriers (the supply side). On this platform, shippers post information about goods that need to be transported, commonly referred to as \emph{loads} in the freight industry. Carriers book loads on the platform and transport them.
Our objective is to 
design a market mechanism that specifies how the platform should set prices for shippers and determine allocation and payment rules for carriers in order to maximize the expected profits of the platform.

The model is defined in a transportation network represented by a graph ${\mathcal{G(N,E)}}$. The set of nodes $\mathcal{N} := \{1,...,n\}$ represents geographical locations, and the set of arcs $\mathcal{E} = \{(i,j): i, j \in \mathcal{N}\}$ contains all the origin-destination (O-D) pairs, also known as \emph{lanes} in truckload transportation. 
We allow self-loops in the network, i.e., $(i,i) \in \mathcal{E}$ for all $i\in \mathcal{N}$, which represent local lanes.
Let $\delta^+(j) := \{k \in \mathcal{N}:(j,k)\in \mathcal{E}\}$ be the outbound nodes from node $j$ and $\delta^-(j) := \{i \in \mathcal{N}: (i,j)\in \mathcal{E}\}$ be the inbound nodes to node $j$.
For simplicity, we assume all loads require exactly one period to transport on any O-D pair. \rr{It might be possible to generalize the single period travel time assumption using the approach in \cite{godfrey2002adaptive}, but this would complicate our model significantly and may necessitate the development of new methodologies and the use of different tools, so we shall leave it for future research. Our numerical experiments conducted in Section~\ref{section: case study} relax the single period travel time assumption, and we demonstrate the effectiveness of our proposed mechanisms under a more general setting in which the travel times of certain O-D pairs require multiple time periods.}

We model the dynamics in the marketplace as a discrete-time, infinite horizon, average cost Markov decision process (MDP).
The objective is to maximize its long-run average profit.
The system state of the MDP consists of both the state of loads and the state of carriers.
\rr{On the carrier side, let $\mathcal{S}_{it}$ denote the set of carriers available to deliver loads at node $i$ at the beginning of period $t$ (including the new carriers who have just arrived to the marketplace), and let $S_{it} := |\mathcal{S}_{it}|$ be its cardinality.} 
On the demand side, let $D_{ijt}$ denote the number of loads in the marketplace that need to be transported from node $i$ to node $j$ at the beginning of period $t$, where $D_{ijt}$ follows a Poisson distribution with the rate $d_{ijt}$.\footnotemark\footnotetext{The Poisson distribution assumption can be replaced with other distributions without loss of generality as long as the variance of $D_{ijt}$ increases at most linearly with the mean. Specifically, when the mean demand grows from $d_{ijt}$ (in the original $\textsf{MDP}$) to $\theta d_{ijt}$ (in instance $\textsf{MDP}^{\theta}$ with scaling parameter $\theta$, which is formally defined in Section \ref{subsection: Fluid Model}), the variance of the demand should grow from $\sigma_{ijt}^2$ to no more than $C \theta \sigma_{ijt}^2$ for some constant $C$. When $D_{ijt}$ follows a Poisson distribution with rate $d_{ijt}$,  the mean demand rate and the variance are both $\theta d_{ijt}$ in $\textsf{MDP}^{\theta}$, which clearly satisfies the above distributional property.} The vector $(\mathbf{S}_t,\mathbf{D}_t)$ is the system state of the MDP in period $t$, where \rr{$\mathbf{S}_t=(S_{it},\forall i\in \mathcal{N})$} and $\mathbf{D}_t=(D_{ijt},\forall (i,j)\in \mathcal{E})$. 

In each period of the MDP, the following events occur in sequence on the \emph{shipper side} of the marketplace:
(1) upon observing the current system state $(\mathbf{S}_{t},\mathbf{D}_{t})$, the platform determines shipper spot prices for the next period $(t+1)$; 
(2) with the announced shipper prices, shippers submit load demand to the platform, which determines $\mathbf{D}_{t+1}$; 
(3) all the loads submitted by shippers during period $t$ are processed and will be released to the carriers for booking at the beginning of the next period $t+1$.

Meanwhile, the following sequence of events occur on the \emph{carrier side} of the marketplace: (1) at the beginning of period $t$, upon observing the current system state $(\mathbf{S}_{t},\mathbf{D}_{t})$, the platform sets prices and/or auction parameters; 
(2) during the period, carriers browse the freight brokerage app to decide which load to book or bid, or leave the app without any booking;
(3) bookings through posted pricing are confirmed immediately, and bookings through auctions are resolved at the end of the period;
(4) after the end of each period, carriers deliver loads, and the platform pays a penalty cost $b_{ij}$ for each unmatched load on lane $(i,j)$; 
(5) carriers who delivered loads on $(i,j)$ remain in the marketplace with probability $q_{ij}$, and become available \rr{at the beginning of the next period at their new location $j$ together with $\Lambda_{j t+1}$ number of new carriers who exogenously arrive to node $j$ at the beginning of period $t+1$}.

\begin{remark}
	To further explain the penalty term $b_{ij}$, note that when a freight platform is not able to find a carrier for a load in practice, a few possible outcomes may occur. First, the shipper may cancel the load and use another brokerage platform. Because our model accounts for revenues from shippers when they tender loads to the platform, if the load is canceled, then the shipper will be refunded and the penalty term can be set equal to the shipper price. The penalty term can also be adjusted to include any additional compensation that the platform pays to the shipper. Second, the platform may choose to hire a third-party logistics company to transport the load. In this case, the penalty term represents the cost of hiring the third-party company. Finally, it is also possible that the shipper may agree with the platform to reschedule the load for a future date, in which case the penalty term can be set as the current shipper rate and the shipper revenue will be accounted for in the future period to reflect the changes in future shipper rates. 
\end{remark}

\subsection{The Platform's Mechanism} \label{subsection: mechanism}

In this section, we introduce the platform's mechanism $(\mathcal{M}_r,\mathcal{M}_p)$, where the shipper-side mechanism $\mathcal{M}_r$ sets the prices charged to the shippers, and the carrier-side mechanism $\mathcal{M}_p$ specifies payment and load allocation rules for the carriers. 

\vspace{0.1in}
\noindent \textbf{Mechanisms for the Shipper Side}

Given the system state $(\mathbf{S}_t,\mathbf{D}_t)$, the shipper-side mechanism $\mathcal{M}_r$ specifies a pricing vector $\mathbf{r}_t = (r_{ijt}: (i,j)\in \mathcal{E})$, where $r_{ijt}$ is the spot price for each load that needs to be shipped from node $i$ to node $j$ in period $t$. In response to the pricing vector $\mathbf{r}_t$, shippers choose to buy transportation services from the marketplace and submit load information to the platform.
The loads that need to be transported from node $i$ to node $j$ are submitted to the platform in period $t$ according to a Poisson distribution with demand rate $d_{ij}(r_{ijt})$. In other words, the demand $D_{ijt+1}$ for the \emph{next} period is drawn from a Poisson distribution with mean $d_{ij}(r_{ijt})$. (Recall that all loads submitted by shippers during period $t$ will be released to carriers at the beginning of period $t+1$.)
Assuming the demand function $d_{ij}(r_{ijt})$ is strictly decreasing,
we denote by $r_{ij}(d_{ijt})$ the inverse demand function given demand rate $d_{ijt}$ in period $t$.
Finally, we make the standard assumption that the revenue function $r_{ij}(d_{ijt})d_{ijt}$ is concave in $d_{ijt}$. The concavity assumption on the revenue function is commonly assumed in the literature, stemming from the economic principle of diminishing marginal returns.

\begin{remark}
	In addition to spot rates, it is common for freight platforms to offer contract rates to shippers. 
	A truckload contract covers all loads offered by a shipper during
	a specific period (e.g., a year) at fixed rate formulas. We remark that our shipper model defined above holds for this general setting, where the demand $d_{ijt}$ is interpreted as the sum of contract and spot demands. Specifically, we define the overall demand as $d_{ijt}(r_{ijt}) := d^{\text{contract}}_{ijt} + d^{\text{spot}}_{ijt}(r_{ijt})$, where $d^{\text{contract}}_{ijt}$ is assumed to be exogenous and independent of the spot price $r_{ijt}$.
\end{remark}

\vspace{0.1in}
\noindent \textbf{Mechanism for the Carrier Side}

Carrier-side mechanisms are more complex than shipper-side mechanisms. To formally describe the carrier-side mechanism, we need to introduce some notations to define carriers' utility functions. Consider a carrier \rr{$s \in \mathcal{S}_{it}$}. 
If the carrier $s$ transports a load on lane $(i,j)$ in period $t$
and receives a payment amount $p^s_{ijt}$, his net utility $U^s_{ijt}$ is determined by a random utility model.
More specifically, we assume that
\begin{equation} \label{eq: carrier utility}
	U^s_{ijt} := \beta p^s_{ijt} - \alpha_{ij} + \epsilon^s_{ijt}, \quad \forall (i,j)\in \mathcal{E},
\end{equation}
where $\beta > 0$ denotes the price sensitivity parameter, $\alpha_{ij} > 0$ represents the average cost for carriers to transport a load from node $i$ to node $j$, and $\epsilon^s_{ijt}$ represents the idiosyncratic error terms of the carrier $s$, which are
assumed to be \rr{independent and identically distributed (i.i.d.) random variables following the Gumbel distribution with location parameter zero and scale parameter one.} 
If the carrier $s$ chooses to not book any load from the platform, he may choose any outside option (i.e., the null alternative) resulting in a utility of \rr{$U^s_{i0t} :=\epsilon^s_{i0t}$, where $\epsilon^s_{i0t}$ follows the same Gumbel distribution with location parameter zero and scale parameter one.} The definition of this random utility model implies that the \emph{opportunity cost} of the carrier $s$ for transporting a load on lane $(i,j)$ in period $t$ with the platform is
\rr{\begin{equation} \label{eq: true opportunity cost}
		C^s_{ijt}:=(\alpha_{ij} -\epsilon^s_{ijt} + \epsilon^s_{i0t})/\beta, \quad \forall (i,j)\in \mathcal{E},
	\end{equation}
	where Eq \eqref{eq: true opportunity cost} is obtained by letting $U^s_{ijt} = U^s_{i0t}$.}

\begin{remark}
	When modeling carrier-side mechanisms, we assume that the platform's policy only affects carriers' utilities in the current period, but not in future periods.
	This assumption is reflected by the highly competitive nature of freight marketplaces. 
    Unlike other transportation sectors, such as ride-sharing or delivery, which are dominated by a few major platforms, the truckload freight market is highly fragmented, with tens of thousands of freight  brokerage companies
operating in the United States. Carriers frequently book loads with different brokerage platforms in subsequent periods, which means that the pricing policy of one particular platform in one period has small impact on carriers’ earnings in the future. Therefore, a carrier’s utility in future periods is primarily determined by the overall market condition, rather than the decision made by a single platform.
\end{remark}

We now define a carrier-side mechanism $\mathcal{M}_p=(\mathbf{A},\mathbf{P})$. By the revelation principle, we focus on direct mechanisms exclusively. 
In period $t$, when carriers arrive to the platform, they submit their bids (representing their opportunity costs) to the platform.
Given a vector of bids $\mathbf{c}_{t}$ submitted by carriers, an allocation rule $\mathbf{A}$ determines the probability $A_{ijt}^s(\mathbf{c}_{t}) \in [0,1]$ that the carrier $s$ is allocated a load from origin node $i$ to destination node $j$ in period $t$, and a payment rule $\mathbf{P}$ determines the expected payment $P^s_{ijt}(\mathbf{c}_{t})$ made to the carrier $s$ for the service provided. 

In this paper, we consider \rr{two types of carrier-side mechanism $\mathcal{M}_p$: posted price mechanisms, and hybrid mechanisms that combine pricing and auction.} In a \emph{posted price mechanism}, for each load in the marketplace, the platform sets a price that is offered to carriers for transporting the load. Since  carriers have i.i.d.\ opportunity costs, we shall drop the carrier index $s$ if we are referring to an arbitrary carrier in \rr{$\mathcal{S}_{it}$. Given a posted price vector $\mathbf{p_t}=(p_{ijt}: (i,j)\in \mathcal{E})$, the carriers' utilities defined in Eq~\eqref{eq: carrier utility} imply that
	each carrier's choice among the available loads follows the multinomial logit (MNL) model. It is well known by the multinomial logit model (cf.~\citealt{McFadden73}) that a carrier at node $i$ will choose to transport a load from node $i$ to node $j$ with probability $x_{ijt}$, where
	\begin{equation*}  \label{eq: def of x_ijt}
		x_{ijt} = \frac{e^{\beta p_{ijt} -\alpha_{ij}}}{\sum_{k\in \delta^+(i)} e^{\beta p_{ikt} -\alpha_{ik}}+1}, \quad \forall j\in \delta^+(i),
	\end{equation*}
	and the probability that a carrier chooses to not book any load and leave the marketplace is given by
	\begin{equation*} \label{eq: def of w_ijt}
		w_{it} = \frac{1}{\sum_{k\in \delta^+(i)} e^{\beta p_{ikt} -\alpha_{ik}}+1}.
	\end{equation*}
	It then follows that $p_{ijt}$ can be expressed as a function of $x_{ijt}$ and $w_{it}$:
	\begin{equation}   \label{eq: def of p_ijt}
		p_{ijt}= \frac{1}{\beta}\left[\ln \left (\frac{x_{ijt}}{w_{it}} \right)+\alpha_{ij} \right], \quad \forall j\in \delta^+(i).
\end{equation}}
Alternatively, the platform may \rr{adopt a hybrid mechanism by combining the pricing mechanism with an \emph{auction} mechanism. In a \emph{hybrid mechanism}, carriers follow a two-stage decision process. In the first stage, carriers choose a lane according to the MNL model based on the prices posted. Then in the second stage, carriers decide whether to book a load on the chosen lane instantly by accepting the posted price, submit a bid for a load on the chosen lane and join an auction, or choose the outside option and leave the platform. Since carriers may have the opportunity to receive updated information in the second stage (e.g., the number of carriers who chose a specific lane in the first stage) and possibly obtain a larger utility through the auction, we assume that carriers consider the outside option in the second stage rather than in the first stage. At the beginning of the second stage, carriers' lane choices are realized and the distribution of carriers' true opportunity cost should be updated to a posterior distribution conditioning on the lane choice decisions. Let $F_{ij}(\cdot)$ and $f_{ij}(\cdot)$ be the posterior cumulative distribution function and the posterior probability density function of a carrier's true opportunity cost after choosing lane $(i,j)$, respectively. It is easy to verify that 
	\begin{align*}
		F_{ij}(c) &= \Pr\{C_{ijt}^s\leq c |U_{ijt}^s>U_{ikt}^s,\forall k\in\delta^+(i), k\neq j\} \\
		&=\Pr\{C_{ijt}^s\leq c | C_{ijt}^s<(p_{ijt}-p_{ikt})+C_{ikt}^s,\forall k\in\delta^+(i), k\neq j\}.
	\end{align*}
	Finally, let $\psi_{ij}(\cdot)$ be a \emph{virtual cost} function defined as
	\begin{align}  
		\psi_{ij}({C}_{ijt}^s) := {C}_{ijt}^s + \frac{F_{ij}({C}_{ijt}^s)}{f_{ij}({C}_{ijt}^s)}. \label{eq:psi-def}
	\end{align}
	We show in the following lemma  that $\psi_{ij}$ satisfies the standard regularity condition, and we refer the readers to the appendix for the complete proof. 
	\begin{lemma}\label{Lemma: virtual function properties}
		The virtual cost function $\psi_{ij}$ defined in (\ref{eq:psi-def}) is strictly increasing and its inverse function $\psi^{-1}_{ij}$ exists.
\end{lemma}}
\rr{We next use a simple example to illustrate how an auction may work in the context of truckload platforms when a hybrid mechanism is adopted. Consider an origin-destination pair $(i,j) \in \mathcal{E}$ and let $\tilde{S}_{ijt}$ denote the total number of carriers in $\mathcal{S}_{it}$ who have chosen lane $(i,j)$ based on the posted prices in the first stage}. Given the observed system state $(\mathbf{S}_t,\mathbf{D}_t)$ in period $t$ and the set of bids $\mathbf{c}_{t}$ that the carriers have reported, the platform sorts the carriers' bids from the smallest to the largest such that $c_{ijt}^{[1]} \le c_{ijt}^{[2]} \le \cdots \le c_{ijt}^{[\rr{\tilde{S}_{ijt}}]}$ and uses the following allocation and payment rules. For each carrier \rr{$s$}, $A_{ijt}^s(\mathbf{c}_{t}) = 1$ and $P_{ijt}^s(\mathbf{c}_{t}) = c^s_{ijt}$ if $c^s_{ijt} \le c_{ijt}^{[\min(\rr{\tilde S_{ijt}}, D_{ijt})]}$. Otherwise, $A_{ijt}^s(\mathbf{c}_{t}) = 0$ and $P_{ijt}^s(\mathbf{c}_{t}) = 0$ if $c^s_{ijt} > c_{ijt}^{[\min(\rr{\tilde S_{ijt}}, D_{ijt})]}$. In other words, when the number of available carriers \rr{$\tilde S_{ijt}$} is no less than the number of loads $D_{ijt}$ that need to be transported, the platform allocates the $D_{ijt}$ loads to the first $D_{ijt}$ lowest bids with probability one, and the payment amount is equal to each carrier's reported opportunity cost. When there are not enough carriers, i.e., $\rr{\tilde S_{ijt}} < D_{ijt}$, each carrier is allocated a load with probability one and receives a payment amount equal to their bid $c_{ijt}^s$. 

\subsection{Dynamic Program Formulation} \label{subsection: DP formulation}

In this section, we provide a dynamic program formulation for the platform's optimization problem.
In period $t$, given the state $(\mathbf{S}_t,\mathbf{D}_t)$, a mechanism $\pi$ maps the state to a pair of shipper/carrier mechanisms $(\mathcal{M}_r, \mathcal{M}_p)$.
Given the carrier-side mechanism $\mathcal{M}_p$, each available carrier either chooses to book a load or leaves the marketplace without any booking. 
We consider a full truck load assumption where each carrier can transport only one load at a time.
For each $(i,j) \in \mathcal{E}$, let $Y_{ijt}$ be a random variable that denotes the number of carriers who have transported a load on lane ($i,j)$ in period $t$, and let \rr{$V_{it}$ be the number of carriers in $\mathcal{S}_{it}$ who leave the marketplace from node $i$} in period $t$. It then follows that for each period $t$, we have 
\begin{equation} \label{eq: sum Y and V = S}
		\sum_{j\in\delta^+(i)} Y_{ijt} + V_{it}=S_{it}, \quad \forall i \in \mathcal{N}.
\end{equation}
For a carrier who just completes a load shipment from node $i$ to node $j$ in period $t$, we assume that the carrier will \rr{stay in the marketplace with probability $q_{ij} \in (0,1)$, and will leave the system with probability $1-q_{ij}$. Let $Z_{ijt}$ be a random variable that denotes the number of carriers who decide to stay in the marketplace after completing a load shipment from node $i$ to node $j$ in period $t$. Note that $Z_{ijt}$ follows a Binomial distribution with parameters ($Y_{ijt}$, $q_{ij}$). Then, the dynamics of how the number of carriers evolving over time is characterized as follows:
	\begin{equation} \label{eq: dynamics of St}
		S_{jt+1} =  \sum_{i\in \delta^-(j)} Z_{ijt} + \Lambda_{jt+1}, \quad \forall j\in \mathcal{N},
	\end{equation} 
	where $\Lambda_{jt+1}$ is a Poisson random variable with rate $\lambda_{j}>0$ that denotes the number of new carriers who exogenously arrive to node $j$ in period $t+1$. }

For carriers who have completed a load shipment in period $t$, the platform makes payments to them.
Let $P_{ijt}$ denote the total payment to all the carriers who have transported a load from node $i$ to node $j$ in period $t$. It is then clear from the definitions that $\Ex\left[\sum_{s=1}^{\rr{{S}_{it}}}A_{ijt}^s(\mathbf{C}_t)\middle|\mathbf{S}_t,\mathbf{D}_t\right] =\Ex[Y_{ijt}|\mathbf{S}_t,\mathbf{D}_t]$ and $\Ex\left[\sum_{s=1}^{\rr{{S}_{it}}}P_{ijt}^s(\mathbf{C}_t)\middle|\mathbf{S}_t,\mathbf{D}_t\right] =\Ex[P_{ijt}|\mathbf{S}_t,\mathbf{D}_t]$ for each $(i,j)\in \mathcal{E}$. If there is not enough carriers to transport all the loads (i.e., $Y_{ijt} < D_{ijt}$), we assume that the excess demand $(D_{ijt}-Y_{ijt})$ incurs a unit penalty cost  
$b_{ij}$.
Therefore, the total payment that the platform has made during period $t$ is equal to $\sum_{(i,j)\in \mathcal{E} } P_{ijt} + b_{ij}(D_{ijt}-Y_{ijt})$.

Next we consider the shipper side dynamics. 
Recall that a shipper-side mechanism $\mathcal{M}_r$ specifies the prices $\mathbf{r}_t$ charged to the shippers, which determine the number of loads $D_{ijt+1}$ that need to be transported from node $i$ to node $j$ in period 
$t+1$. We assume that $D_{ijt+1}$ follows a Poisson process with rate $d_{ij}(r_{ijt})$. 
The revenue that the platform collects from the shippers is $r_{ijt}D_{ijt+1}$ with mean $r_{ijt} d_{ij}(r_{ijt})$.
It is worth pointing out that, although these loads are transported in period $t+1$, we account them for the revenue in period $t$. Therefore, the platform's expected profit in period $t$ is given by: 
\begin{equation}  \label{Ex profit under policy pi}
	G^{{\pi}}_t(\mathbf{S}_t, \mathbf{D}_t) :=  \Ex \left[ \sum_{(i,j)\in \mathcal{E}} r_{ijt} d_{ij}(r_{ijt}) - b_{ij}(D_{ijt}-Y_{ijt}) - P_{ijt}  \;\middle|\; \mathbf{S}_t,\mathbf{D}_t \right].
\end{equation}
Define $\gamma^{{\pi}}$ as the long-run average profit per period under a stationary policy ${\pi}$:
\[
\gamma^{{\pi}} := \lim_{T \rightarrow \infty} \frac{1}{T}\Ex \left[
\sum_{t=1}^{T} G^{{\pi}}_t(\mathbf{S}_t, \mathbf{D}_t)\right]. 
\] 
The existence of $\gamma^{{\pi}}$ is implied by Proposition~\ref{Prop: stationary} that we detail below in Section \ref{subsubsection: stability IC and IR}.
Let $\gamma^*$ denote the optimal long-run average profit per period, and let $h(\mathbf{S}_t,\mathbf{D}_t)$ denote the differential cost for the state $(\mathbf{S}_t,\mathbf{D}_t)$. Then the optimality equation is given by
\rr{
	\begin{equation}
		\begin{aligned}
			& \gamma^*+h(\mathbf{S}_t,\mathbf{D}_t) = \max_{{\pi}\in \Pi} \, \Ex\left[G^{{\pi}}_t(\mathbf{S}_t,\mathbf{D}_t) + \Ex[h(\mathbf{Z}_t^T\mathds{1}+\boldsymbol{\Lambda}_{t+1},\mathbf{D}_{t+1})]\;\middle|\; \mathbf{S}_t,\mathbf{D}_t \right], \quad \forall (\mathbf{S}_t, \mathbf{D}_t)
			\label{eq: MDP}
		\end{aligned} \tag{\textsf{MDP}} 
	\end{equation}
	where $\boldsymbol{\Lambda_t}=(\Lambda_{it},\forall i\in \mathcal{N})$, $\mathbf{Z}_t=(Z_{ijt},\forall (i,j)\in \mathcal{E})$, and $\mathds{1}$ is a vector with all entries equal to one. }

\subsection{Stability, Incentive Compatibility (IC) and Individual Rationality (IR).} \label{subsubsection: stability IC and IR}

In this subsection, we introduce some properties of the platform's mechanism $(\mathcal{M}_r,\mathcal{M}_p)$ introduced above. 
Let $\pi \in \Pi$ be a stationary policy that maps the system state $(\mathbf{S}_t, \mathbf{D}_t)$ to a mechanism $(\mathcal{M}_r,\mathcal{M}_p)$.
Proposition \ref{Prop: stationary} below shows that the Markov chain induced by any stationary policy ${\pi}$ is stable. The proof uses the Foster-Lyapunov theorem \citep{foster1953stochastic}
with a Lyapunov function on the total number of carriers in the system, and we refer the readers to the appendix for the complete proof.

\begin{proposition} \label{Prop: stationary}
	There exists a stationary distribution of the Markov chain induced by the platform's policy ${\pi}$ and the system is stable (i.e., positive recurrent).
\end{proposition}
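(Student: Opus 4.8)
The plan is to apply Foster's drift criterion for countable-state Markov chains, using the total number of carriers (together with the number of open loads) in the system as the Lyapunov function. Fix any stationary policy $\pi\in\Pi$, under which $(\mathbf{S}_t,\mathbf{D}_t)$ is a time-homogeneous Markov chain on the countable state space of pairs of non-negative integer vectors indexed by $\mathcal{E}$. I would first verify irreducibility. From any state, with positive probability every carrier leaves within one period --- carriers who do not book a load leave by definition, and each carrier who delivers a load then departs with probability at least $1-\rho$, where $\rho:=\max_{j\in\mathcal{N}}\sum_{k\in\delta^+(j)}q_{jk}<1$ is the largest retention probability --- while at the same time $\boldsymbol{\Lambda}_{t+1}=\mathbf{0}$, so that $\mathbf{S}_{t+1}=\mathbf{0}$; moreover $\mathbf{D}_{t+1}=\mathbf{0}$ with positive probability because each $D_{ijt+1}$ is Poisson. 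Hence $(\mathbf{0},\mathbf{0})$ is reachable from every state, and conversely from $(\mathbf{0},\mathbf{0})$ one has $\mathbf{S}_{t+1}=\boldsymbol{\Lambda}_{t+1}$ together with $\mathbf{D}_{t+1}$ Poisson, which place positive mass on every state; thus all states communicate and it suffices to prove positive recurrence.

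Next I would take $L(\mathbf{S},\mathbf{D}):=\sum_{(i,j)\in\mathcal{E}}(S_{ij}+D_{ij})$, a nonnegative function whose sublevel sets $\{L\le K\}$ are finite. The heart of the argument is a one-step drift bound that holds \emph{uniformly over all policies}, resting on three structural facts. First, $Y_{ijt}\le S_{ijt}$ by~\eqref{eq: sum Y and V = S} since $V_{ijt}\ge 0$. Second, conditional on the number of carriers arriving at node $i$, namely $\sum_{k\in\delta^-(i)}Y_{kit}$, the total that remain at $i$, namely $\sum_{j\in\delta^+(i)}Z_{ijt}$, is a sum of multinomial counts with conditional mean $(\sum_{k\in\delta^-(i)}Y_{kit})\sum_{j\in\delta^+(i)}q_{ij}\le\rho\sum_{k\in\delta^-(i)}Y_{kit}$; summing over $i\in\mathcal{N}$ gives $\Ex[\sum_{(i,j)\in\mathcal{E}}Z_{ijt}\mid\mathbf{S}_t,\mathbf{D}_t]\le\rho\sum_{(i,j)\in\mathcal{E}}\Ex[Y_{ijt}\mid\mathbf{S}_t,\mathbf{D}_t]\le\rho\sum_{(i,j)\in\mathcal{E}}S_{ijt}$. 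Third, $\Ex[\Lambda_{ijt+1}]=\lambda_{ij}$ and $\Ex[D_{ijt+1}\mid\mathbf{S}_t,\mathbf{D}_t]=d_{ij}(r_{ijt})\le\bar d_{ij}<\infty$, the demand rates being bounded. Using~\eqref{eq: dynamics of St}, these combine into
\[
\Ex\left[L(\mathbf{S}_{t+1},\mathbf{D}_{t+1})\,\middle|\,\mathbf{S}_t,\mathbf{D}_t\right] \le \rho\sum_{(i,j)\in\mathcal{E}}S_{ijt}+\kappa, \qquad \kappa:=\sum_{(i,j)\in\mathcal{E}}(\lambda_{ij}+\bar d_{ij}),
\]
so the drift is at most $-(1-\rho)\sum_{(i,j)}S_{ijt}-\sum_{(i,j)}D_{ijt}+\kappa\le-\theta\,L(\mathbf{S}_t,\mathbf{D}_t)+\kappa$ with $\theta:=\min\{1-\rho,\,1\}>0$.

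Finally, this inequality gives $\Ex[L_{t+1}-L_t\mid\mathbf{S}_t,\mathbf{D}_t]\le-1$ off the finite set $\{(\mathbf{S},\mathbf{D}):L(\mathbf{S},\mathbf{D})\le(\kappa+1)/\theta\}$, on which the one-step expected increment of $L$ is in any case finite; Foster's criterion then yields positive recurrence, and combined with irreducibility it produces a unique stationary distribution, which establishes stability. The step I expect to be the main obstacle is making the drift estimate genuinely uniform in $\pi$: it must not reference the prices, auction parameters, or allocation probabilities chosen by the policy, which forces the argument onto the crude inequalities $Y_{ij}\le S_{ij}$, the retention probabilities summing to $\rho<1$, and boundedness of the demand rates. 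A secondary technical point is that the open-load counts $D_{ij}$ must be carried inside $L$ --- even though they regenerate each period --- so that the sublevel sets of $L$ are finite and Foster's criterion applies on the full state space.
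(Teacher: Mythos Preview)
Your argument is correct and, in fact, more direct than the paper's. Both proofs rest on the same idea --- Foster's criterion applied to the total number of carriers, exploiting that each carrier either exits or survives to the next period with probability at most $\rho<1$ --- but the paper takes a detour: it builds a \emph{modified} one-dimensional chain $\hat S_t$ in which every carrier (loaded or not) remains with probability $\hat q=\max_{(i,j)}q_{ij}$, shows that this dominating chain is positive recurrent via Foster with $\Phi(\hat S)=\hat S$, and then infers stability of the original system from the bound $\Ex[\sum_{i,j}S_{ijt}]\le\Ex[\hat S_t]$. Your route avoids the dominating construction and applies Foster directly to the original state $(\mathbf{S},\mathbf{D})$, which forces you --- as you correctly note --- to include $\sum_{(i,j)}D_{ij}$ in the Lyapunov function so that sublevel sets are finite on the full state space. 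The paper sidesteps this by collapsing to a scalar state, at the cost of an extra layer of comparison. Your explicit treatment of irreducibility (reaching and leaving $(\mathbf 0,\mathbf 0)$) is another point the paper glosses over; just note that the ``positive mass on every state'' step implicitly requires $d_{ij}(r_{ijt})>0$ under the policy $\pi$, a mild regularity condition.
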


By the revelation principle \citep{Myerson_1981}, we will focus on direct mechanisms that satisfy the Bayesian incentive compatibility (IC) and individual rationality (IR) properties. 
We next briefly discuss these properties and the conditions under which IC and IR constraints are satisfied.

\rr{Consider a carrier $s \in \mathcal{S}_{it}$ who chooses to deliver a load from node $i$ to node $j$ according to the MNL model at the first stage of period $t$.} Let $c^s_{ijt}$ denote the bid for a load on lane $(i,j)$ submitted by the carrier $s$ to the platform. In other words, $c^s_{ijt}$ represents the opportunity cost reported by the carrier $s$ for transporting a load from node $i$ to node $j$ in period $t$. \rr{Let $\tilde S_{ijt}$ denote the total number of carriers in $\mathcal{S}_{it}$ who choose lane $(i,j)$, and let $\mathbf{C}^{-s}_{ijt} = \left({C}^{1}_{ijt},...,{C}^{s-1}_{ijt},{C}^{s+1}_{ijt},...,{C}^{\tilde{S}_{ijt}}_{ijt}\right)$ be a vector that represents the opportunity costs of all the  carriers who have chosen lane $(i,j)$ in period $t$ other than $s$, where $C^{s}_{ijt}$ is defined in \eqref{eq: true opportunity cost}. Let $g^s(\cdot)$ denote the joint probability density function of $\mathbf{C}^{-s}_{ijt}$. Finally, let $(c^s_{ijt},\mathbf{C}^{-s}_{ijt}) = \left({C}^{1}_{ijt},...,{C}^{s-1}_{ijt},c^s_{ijt},{C}^{s+1}_{ijt},...,{C}^{\tilde{S}_{ijt}}_{ijt}\right)$.}

Assume that all the carriers other than $s$ report their true opportunity costs $\mathbf{C}^{-s}_{ijt}$ when submitting the bids. Then the carrier $s$ who submitted bid $c^s_{ijt}$ will be allocated a load to be transported from node $i$ to node $j$ in period $t$ with probability 
\[
a^s_{ijt}(c^s_{ijt}) := \int A^s_{ijt}(c^s_{ijt},\mathbf{C}^{-s}_{ijt})g^s(\mathbf{C}^{-s}_{ijt})d\mathbf{C}^{-s}_{ijt},
\] 
and the expected payment to the carrier $s$ for transporting a load from node $i$ to node $j$ is 
\[
p^s_{ijt}(c^s_{ijt}) := \int P^s_{ijt}(c^s_{ijt},\mathbf{C}^{-s}_{ijt})g^s(\mathbf{C}^{-s}_{ijt})d\mathbf{C}^{-s}_{ijt}.
\]

Let $C^s_{ijt}$ denote the true opportunity cost of the carrier $s$. Then the expected net utility of the carrier $s$ when he submits a bid $c^s_{ijt}$ is given by
\[
{u}^s_{ijt}(c^s_{ijt}):={p}^s_{ijt}(c^s_{ijt}) - a^s_{ijt}(c^s_{ijt}) C^s_{ijt}. 
\]
A carrier-side mechanism $\mathcal{M}_p$ is \emph{incentive compatible} if it is a Bayesian Nash equilibrium for each carrier to report his true opportunity cost. That is, for each carrier $s$, we have
$$
{u}^s_{ijt}(C^s_{ijt}) = {p}^s_{ijt}(C^s_{ijt})-a^s_{ijt}(C^s_{ijt}) C^s_{ijt} \ge {p}^s_{ijt}(c^s_{ijt})-a^s_{ijt}(c^s_{ijt}) C^s_{ijt}, \quad \forall c^s_{ijt}.
$$
We say a carrier-side mechanism $\mathcal{M}_p$ is \emph{individual rational} if each carrier's expected net utility is non-negative when they report their true opportunity costs, i.e., $u^s_{ijt}(C^s_{ijt})\ge 0$ for each carrier $s$.

Given a platform mechanism $(\mathcal{M}_r,\mathcal{M}_p)$ that is IC and IR, 
the following revenue equivalence principle characterizes the carrier's expected payment by the allocation rule.
\begin{proposition}\label{prop:p = a and psi}
	Under a given state $(\mathbf{S},\mathbf{D})$ with the platform's mechanism ${\pi}(\mathbf{S},\mathbf{D})=(\mathcal{M}_r,\mathcal{M}_p)$ that is IC and IR, the expected payment to a carrier $s\in \mathcal{S}_{i}$ who chooses lane $(i,j)$ is
	\begin{equation} \label{eq: expected payment to carrier s}
		\Ex[{p}^s_{ij}(C^s_{ij})|\mathbf{S},\mathbf{D}]=\Ex[a^s_{ij}(C^s_{ij})\psi_{ij}(C^s_{ij})|\mathbf{S},\mathbf{D}],
	\end{equation}
	where $C^s_{ij}$ denotes the true opportunity cost of the carrier $s$, and the virtual cost function $\psi_{ij}(\cdot)$ is defined in \rr{Eq~\eqref{eq:psi-def}}. 
\end{proposition}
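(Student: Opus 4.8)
The plan is to carry out the standard single-parameter mechanism-design (Myerson) argument, adapted to the procurement setting in which a carrier's private type is an opportunity \emph{cost} rather than a value. Fix the state $(\mathbf S,\mathbf D)$, a lane $(i,j)\in\mathcal E$, and a carrier $s\in\mathcal S_{ij}$; throughout I suppress the period index and work with the reduced allocation and payment functions $a^s_{ij}(\cdot)$ and $p^s_{ij}(\cdot)$ defined in the excerpt, which have already integrated out the competing carriers' reports. The expectations in \eqref{eq: expected payment to carrier s} are then over the carrier's own cost $C^s_{ij}\sim F_{ij}$.

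First I would extract monotonicity of the allocation rule from incentive compatibility. Applying the IC inequality both to a carrier with true cost $C$ contemplating the report $c$ and to one with true cost $c$ contemplating the report $C$, and combining the two, gives $(C-c)\bigl(a^s_{ij}(c)-a^s_{ij}(C)\bigr)\ge 0$, so $a^s_{ij}$ is non-increasing in the reported cost. Writing $u^s_{ij}(C):=p^s_{ij}(C)-a^s_{ij}(C)\,C$ for the equilibrium utility, the same pair of inequalities sandwiches the difference quotient of $u^s_{ij}$ between $-a^s_{ij}(c)$ and $-a^s_{ij}(C)$; since $a^s_{ij}\in[0,1]$ this makes $u^s_{ij}$ Lipschitz, hence absolutely continuous, with $(u^s_{ij})'(C)=-a^s_{ij}(C)$ outside the (countable) set of discontinuities of the monotone function $a^s_{ij}$. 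The fundamental theorem of calculus then yields
\[
u^s_{ij}(C)\;=\;u^s_{ij}(\bar c)\;+\;\int_{C}^{\bar c} a^s_{ij}(t)\,dt ,
\]
where $\bar c$ denotes the right endpoint of the support of $C^s_{ij}$ (with the usual limiting interpretation if this support is unbounded).

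Because $a^s_{ij}\ge 0$, the function $u^s_{ij}$ is non-increasing, so the IR condition $u^s_{ij}\ge 0$ is equivalent to $u^s_{ij}(\bar c)\ge 0$; since the platform's mechanism $\pi(\mathbf S,\mathbf D)$ is profit-maximizing and therefore pays each carrier as little as the IC and IR constraints allow, this binds at the worst type, so $u^s_{ij}(\bar c)=0$. (Equivalently: in each of the posted-price, auction, and hybrid mechanisms the highest-cost carrier is never allocated and receives nothing, so $u^s_{ij}(\bar c)=0$ directly.) Substituting $p^s_{ij}(C)=a^s_{ij}(C)\,C+u^s_{ij}(C)$, taking expectations over $C^s_{ij}$, and interchanging the order of integration via $\int_{\underline c}^{\bar c}f_{ij}(C)\int_{C}^{\bar c}a^s_{ij}(t)\,dt\,dC=\int_{\underline c}^{\bar c}a^s_{ij}(t)\,F_{ij}(t)\,dt$ gives
\[
\Ex\bigl[p^s_{ij}(C^s_{ij})\bigr]=\Ex\bigl[a^s_{ij}(C^s_{ij})\,C^s_{ij}\bigr]+\Ex\!\left[a^s_{ij}(C^s_{ij})\,\tfrac{F_{ij}(C^s_{ij})}{f_{ij}(C^s_{ij})}\right]=\Ex\bigl[a^s_{ij}(C^s_{ij})\,\psi_{ij}(C^s_{ij})\bigr],
\]
using the definition \eqref{eq:psi-def} of $\psi_{ij}$; reinstating the conditioning on $(\mathbf S,\mathbf D)$ yields \eqref{eq: expected payment to carrier s}.

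The only delicate point — hence the step I expect to be the main obstacle — is pinning down the integration constant $u^s_{ij}(\bar c)$. The envelope argument determines the payment rule only up to an additive constant per carrier, and the clean equality in the statement (as opposed to the inequality $\Ex[p^s_{ij}]\ge\Ex[a^s_{ij}\psi_{ij}]$) hinges on $u^s_{ij}(\bar c)=0$, i.e.\ on the platform extracting all surplus from the highest-cost carrier. One should therefore either state this as the normalization implicit in "the platform's profit-maximizing mechanism" or verify that it is without loss of generality for the bound the proposition is used to prove; the remaining manipulations are routine, modulo the mild measure-theoretic care needed for a monotone (possibly discontinuous) $a^s_{ij}$ and for an unbounded cost support.
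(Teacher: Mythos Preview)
Your proposal is correct and follows essentially the same route as the paper: derive the envelope formula $u'(C)=-a(C)$ from IC, integrate to express $u$ as a tail integral of $a$, then swap the order of integration to pick up the $F_{ij}/f_{ij}$ term and recognize $\psi_{ij}$. The paper's proof is terser---it simply asserts convexity of $u$ with subgradient $-a$ and writes $u(C)=\int_C^\infty a(\zeta)\,d\zeta$ directly---whereas you supply the monotonicity and absolute-continuity details explicitly.

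On the boundary condition you flag: the paper disposes of it in one line, ``$\lim_{C\to\infty}a(C)=0$ because the platform should reject the carrier whose opportunity cost is $\infty$,'' and then writes the integral from $C$ to $\infty$ with no constant. That is exactly the normalization $u^s_{ij}(\bar c)=0$ you isolate, justified informally rather than via profit maximization. Your observation that the stated \emph{equality} needs this normalization (a generic IC+IR mechanism only gives $\Ex[p]\ge\Ex[a\psi]$) is sharper than what the paper makes explicit; in the paper's applications the proposition is invoked either to lower-bound payments (where the inequality suffices) or for the specific \textsf{AUC}/\textsf{HYB} mechanisms (where the worst-type carrier indeed receives nothing), so your caveat is well placed.
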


\section{Fluid Approximation and Profit Benchmark}
\label{section: Fluid Approx}

The dynamic programming formulation \ref{eq: MDP} is intractable since the number of nodes may be large in practice and the state space grows at least exponentially with the number of nodes. This motivates us to consider a fluid approximation of the \ref{eq: MDP}, where the random shipper demands and carrier arrivals are replaced with their respective mean values, and we consider the system under the stationary distribution. In this section, we first provide the formulation of the fluid model. Then, we show that the fluid model can be transformed into a convex optimization problem and hence can be solved efficiently. Finally, we show that the optimal objective value of the fluid optimization problem serves as an upper bound for the long-run average profit for the \ref{eq: MDP} under \emph{any} stationary mechanism. This upper bound is useful in that it can be used as a benchmark to establish performance guarantees of any given mechanism. As we shall show later in Section \ref{section: posted price mechanism}, a simple static posted price mechanism for the \ref{eq: MDP} based on the solution to the fluid optimization problem is asymptotically optimal.

\subsection{The Fluid Model}  \label{subsection: Fluid Model}

In the fluid model, the random shipper demands $D_{ijt}$ and random carrier arrivals $\Lambda_{it}$ in each period $t$ are replaced with their mean values $d_{ij}$ and $\lambda_i$, respectively, for each node $i \in \mathcal{N}$ and every origin-destination pair $(i,j) \in \mathcal{E}$. We consider the fluid system in a steady state.
	Let $x_{ij}$ denote the fraction of carriers at node $i$ who choose to transport a load from node $i$ to node $j$ in the fluid system, and let $w_{i}$ be the fraction of carriers at node $i$ who choose to not book any load and leave the marketplace. Suppose for now that a posted price mechanism is used on the carrier side. It then follows from Eq~\eqref{eq: def of p_ijt} that the payment $p_{ij}$ offered to carriers for transporting a load from node $i$ to node $j$ is given by
	\[
	p_{ij}= \frac{1}{\beta}\left[\ln \left (\frac{x_{ij}}{w_{i}} \right)+\alpha_{ij} \right], \quad \forall (i,j)\in \mathcal{E}.
	\] 
It is worth pointing out that the fraction of carriers who \emph{choose} a load from node $i$ to node $j$ may not be equal to the number of carriers who actually \emph{ship} a load from node $i$ to node $j$, since the actual shipment depends on both the carriers' choices and the shipper demand. In view of this distinction, we let $y_{ij}$ be the fraction of carriers at node $i$ who end up hauling loads from node $i$ to node $j$, and let $v_{i}$ be the fraction of carriers who leave the system from node $i$. Let $\bar{\lambda}_i$ denote the total available  carriers at node $i$,  which includes both the new arrivals and the existing carriers in the marketplace who just finished a shipment and decided to stay at node $i$. 

We define the fluid approximation model as
\begin{subequations}
		\begin{align}
			\max_{\mathbf{d},\mathbf{x},\mathbf{w},\mathbf{y},\mathbf{\bar{\lambda}},\mathbf{v}}  \quad &   \sum_{(i, j)\in \mathcal{E}}r_{ij}(d_{ij}) d_{ij} -  \frac{1}{\beta}\left[\ln \left (\frac{x_{ij}}{w_{i}} \right)+\alpha_{ij} \right] \bar{\lambda}_i y_{ij} - b_{ij}(d_{ij}-\bar{\lambda}_i y_{ij}) \label{objective_naive} \\
			\mbox{s.t.}\quad 
			&\;\sum_{i\in\delta^-(j)}q_{ij}\bar{\lambda}_i y_{ij}+\lambda_j = \bar{\lambda}_j \left( \sum_{k\in\delta^+(j)} y_{jk}+v_j \right),  \quad \forall j\in \mathcal{N}, \label{flow_naive}\\
			&\;\sum_{k\in\delta^+(j)} x_{jk}+w_j =1, \quad \forall j\in \mathcal{N},\label{prob_x_naive}\\
			&\;\sum_{k\in\delta^+(j)} y_{jk}+v_j =1, \quad \forall j\in \mathcal{N},\label{prob_y_naive}\\
			&\; \bar{\lambda}_iy_{ij}\leq d_{ij}, \quad \quad \forall (i,j)\in \mathcal{E}, \label{demand_naive}\\
			&\; y_{ij}\leq x_{ij}, \quad \forall (i,j)\in \mathcal{E},\label{relation_x_y_naive}\\
			&\; x_{ij},y_{ij}\ge 0, \quad \forall (i,j)\in \mathcal{E} \mbox{ and }  w_{i},v_{i},\bar{\lambda}_i\ge 0, \quad \forall i\in \mathcal{N}\nonumber.
		\end{align} \label{original FA}
\end{subequations}
In the above fluid model, the objective function \eqref{objective_naive} maximizes the platform's per period profit, where the first term represents the revenues received from shippers, the second term represents payments made to carriers, and the last term represents the penalty costs incurred from unsatisfied demand. \rr{Constraint \eqref{flow_naive} represents the flow balance equations, where the left-hand side represents the total inflow rate to node $j$, and the right-hand side represents the outflow rate from node $j$.} Constraints \eqref{prob_x_naive} and \eqref{prob_y_naive} follow from the definition of probability vectors $\mathbf{x}, \mathbf{w}, \mathbf{y}$, and $\mathbf{v}$. Constraint \eqref{demand_naive} states that the flow of loaded carriers cannot exceed the number of loads that are available for each O-D pair. Constraint \eqref{relation_x_y_naive} requires that $y_{ij}$ cannot exceed $x_{ij}$ since the actual shipment is constrained by the carriers' supply.

\subsection{Convex Reformulation}  \label{subsection: convex reformulation}

In this section, we show that the fluid model presented in Section \ref{subsection: Fluid Model} can be reformulated into a \rr{(convex) conic} optimization problem and hence can be solved efficiently. To that end, we first introduce some notations.
\rr{For each node $i \in \mathcal{N}$ and each O-D pair $(i,j) \in \mathcal{E}$, define $\bar{x}_{ij} := \bar{\lambda}_ix_{ij}$, $\bar{w}_{i} := \bar{\lambda}_iw_{i}$, $\bar{y}_{ij} := \bar{\lambda}_iy_{ij}$, and $\bar{v}_{i} := \bar{\lambda}_iv_{i}$. By the definition of these new variables, it is clear that 
	\[
	p_{ij}=\frac{1}{\beta}\left[\ln \left (\frac{x_{ij}}{w_{i}} \right)+\alpha_{ij} \right] = \frac{1}{\beta}\left[\ln \left (\frac{\bar{x}_{ij}}{\bar{w}_{i}} \right)+\alpha_{ij} \right], \quad \forall (i,j)\in \mathcal{E}.
	\]} 
By substituting the new variables into formulation (\ref{original FA}), we have
\rr{\begin{subequations}
		\begin{align}
			\max_{\mathbf{d}, \bar{\mathbf{x}}, \bar{\mathbf{w}}, \bar{\mathbf{y}}, \bar{\mathbf{v}}}  & \quad 
			\sum_{(i, j)\in \mathcal{E}}r_{ij}(d_{ij})d_{ij} - \frac{1}{\beta}\left[\ln \left (\frac{\bar{x}_{ij}}{\bar{w}_{i}} \right)+\alpha_{ij} \right] \bar{y}_{ij} - b_{ij}(d_{ij}-\bar{y}_{ij}) \label{obj: FA}
			\\
			\mbox{s.t. } \; & \sum_{i\in\delta^-(j)}q_{ij}\bar{y}_{ij}+\lambda_j = \sum_{k\in\delta^+(j)}\bar{y}_{jk}+\bar{v}_{j},\quad \forall j\in \mathcal{N},\label{flow_FA}\\
			\; & \sum_{k\in\delta^+(j)}\bar{y}_{jk}+\bar{v}_j = \sum_{k\in\delta^+(j)}\bar{x}_{jk}+\bar{w}_j, \quad \forall j\in\mathcal{N},\label{flow_x_y_FA}\\
			&\;\bar{y}_{ij} \leq d_{ij}, \quad  \forall (i,j)\in\mathcal{E},\\
			& \;\bar{y}_{ij} \le \bar{x}_{ij}, \quad  \forall (i,j)\in\mathcal{E}, \\
			& \;\bar{x}_{ij},\bar{y}_{ij} \ge 0, \quad  \forall (i,j)\in\mathcal{E} \mbox{  and  } \bar{w}_{i},\bar{v}_{i} \ge 0, \quad  \forall i \in \mathcal{N}. \label{noneneagetive_FA}
		\end{align} \label{new FA}
\end{subequations}}
The interpretations of the constraints in the new formulation \eqref{new FA} are straightforward and similar to those in the original formulation \eqref{original FA}. Next, we show that the new formulation \eqref{new FA} and the original fluid model \eqref{original FA} are equivalent. First, it is easy to check that for any feasible solution to the original formulation \eqref{original FA}, there exists a corresponding feasible solution to the new formulation \eqref{new FA} with the same objective value by the definition of variables $\bar{\mathbf{x}}, \bar{\mathbf{w}}, \bar{\mathbf{y}}$, \rr{and $\bar{\mathbf{v}}$}.
\rr{On the other hand, for any given feasible solution $(\mathbf{d}, \bar{\mathbf{x}}, \bar{\mathbf{w}}, \bar{\mathbf{y}}, \bar{\mathbf{v}})$ to the new formulation, let $\bar{\lambda}_j = \sum_{i}q_{ij}\bar{y}_{ij}+\lambda_j$ for each $j \in \mathcal{N}$, and define variables $y_{ij}=\bar{y}_{ij}/\bar{\lambda}_i$, $x_{ij}=\bar{x}_{ij}/\bar{\lambda}_i$, $w_{i}=\bar{w}_i/\bar{\lambda}_i$,
	$v_{i}=\bar{v}_i/\bar{\lambda}_i$ for each $(i,j) \in \mathcal{E}$ and $\mathbf{d}$ remains unchanged.} It is easy to check that these newly defined variables  are feasible to problem \eqref{original FA}, and the new formulation can be reduced to the original formulation. 

\rr{Let $(\mathbf{d}^*,\bar{\mathbf{x}}^*, \bar{\mathbf{w}}^*, \bar{\mathbf{y}}^*, \bar{\mathbf{v}}^*)$ be an optimal solution to \eqref{new FA}. We note that the optimal solution satisfies $\bar{\mathbf{x}}^* = \bar{\mathbf{y}}^*$. To see this, suppose that there exists some node $i \in \mathcal{N}$ such that $\bar{y}^*_{ij}<\bar{x}^*_{ij}$ for some $j\in \delta^+(i)$. This implies that $\bar{w}^*_{i} < \bar{v}^*_{i}$. In addition, we have
	\[
	\frac{1}{\beta}\left[\ln \left (\frac{\bar{x}^*_{ij}}{\bar{w}^*_{i}} \right)+\alpha_{ij} \right]  > \frac{1}{\beta}\left[\ln \left (\frac{\bar{y}^*_{ij}}{\bar{v}^*_{i}} \right)+\alpha_{ij} \right]. \]  
	In this case, we can construct a new solution $(\mathbf{d}', \bar{\mathbf{x}}', \bar{\mathbf{w}}', \bar{\mathbf{y}}', \bar{\mathbf{v}}')$, where  $\bar{x}'_{ij} := \bar{y}^*_{ij}$ and $\bar{w}'_{i} := \bar{w}^*_{i} + (\bar{x}^*_{ij} - \bar{y}^*_{ij})$, and the rest of the variables have the same value as that in $(\mathbf{d}^*, \bar{\mathbf{x}}^*, \bar{\mathbf{w}}^*, \bar{\mathbf{y}}^*, \bar{\mathbf{v}}^*)$. It is straightforward to check that $(\mathbf{d}', \bar{\mathbf{x}}', \bar{\mathbf{w}}', \bar{\mathbf{y}}', \bar{\mathbf{v}}')$ is feasible to \eqref{new FA} and achieves a strictly larger objective value. This leads to a contradiction with the optimality of $(\mathbf{d}^*, \bar{\mathbf{x}}^*, \bar{\mathbf{w}}^*, \bar{\mathbf{y}}^*, \bar{\mathbf{v}}^*)$, and hence we must have $\bar{\mathbf{x}}^* = \bar{\mathbf{y}}^*$. 
	In view of this, formulation \eqref{new FA} can be simplified as follows:
	\begin{align}
		(\textsf{FA}): \quad \max_{\mathbf{d}, \bar{\mathbf{y}}, \bar{\mathbf{v}}}  & \quad 
		\sum_{(i, j)\in \mathcal{E}}r_{ij}(d_{ij})d_{ij} - \frac{1}{\beta}\left[\ln \left (\frac{\bar{y}_{ij}}{\bar{v}_{i}} \right)+\alpha_{ij} \right] \bar{y}_{ij} - b_{ij}(d_{ij}-\bar{y}_{ij})\nonumber
		\\
		\mbox{s.t. } \; & \sum_{i\in \delta^-(j)}q_{ij}\bar{y}_{ij}+\lambda_j = \sum_{k\in \delta^+(j)}\bar{y}_{jk}+\bar{v}_{j},\quad \forall j\in \mathcal{N},\label{FA linking constraint}  \\
		&\;\bar{y}_{ij} \leq d_{ij}, \quad  \forall (i,j)\in \mathcal{E},\label{FA demand constraint}\\
		& \;\bar{y}_{ij} \ge 0, \quad  \forall (i,j)\in \mathcal{E} \mbox{ and } \bar{v}_{i} \ge 0, \quad  \forall i\in \mathcal{N}. \nonumber 
\end{align} }
\rr{In what follows, we show that \textsf{FA} is a convex optimization problem under the assumption that $r_{ij}(d_{ij})d_{ij}$ is concave in $d_{ij}$. Let $\tau_{ij}$ be an upper bound of $\frac{1}{\beta}\left[\ln \left (\frac{\bar{y}_{ij}}{\bar{v}_{i}} \right)\right] \bar{y}_{ij}$. Then, we have
	\[
	\tau_{ij}\geq \frac{1}{\beta}\left[\ln \left (\frac{\bar{y}_{ij}}{\bar{v}_{i}} \right)\right] \bar{y}_{ij} \, \Leftrightarrow \, \beta \tau_{ij}\ge \ln \left(\frac{ \bar{y}_{ij}}{ \bar{v}_i} \right) \bar{y}_{ij} \, \Leftrightarrow \, (\bar{v}_i,\bar{y}_{ij}, -\beta \tau_{ij})\in K_{\text{exp}},    
	\]
	where $K_{\text{exp}}$ is the \emph{exponential cone} defined as
	\[
	K_{\text{exp}}=\{\boldsymbol{\xi}\in \mathbb{R}^3:\xi_0\ge \xi_1 \exp(\xi_2/\xi_1), \xi_0,\xi_1\ge 0\}.
	\]
	Then, the \textsf{FA} reduces to the following (convex) conic optimization problem, which can be readily solved by commercial solvers:
	\begin{align*}
		\max_{\mathbf{d}, \bar{\mathbf{y}}, \bar{\mathbf{v}},\mathbf{\tau}}  & \quad 
		\sum_{(i, j)\in \mathcal{E}}r_{ij}(d_{ij})d_{ij} - \frac{\alpha_{ij} }{\beta}\bar{y}_{ij} - \tau_{ij} -
		b_{ij}(d_{ij} - \bar{y}_{ij}) 
		\\
		\mbox{s.t. } \; 
		& \sum_{i\in \delta^-(j)}q_{ij}\bar{y}_{ij}+\lambda_j = \sum_{k\in \delta^+(j)}\bar{y}_{jk}+\bar{v}_{j},\quad \forall j\in \mathcal{N},  \\
		&\;\bar{y}_{ij} \leq d_{ij}, \quad \quad \forall (i,j)\in \mathcal{E}, \\
		&(\bar{v}_i,\bar{y}_{ij}, -\beta \tau_{ij})\in K_{\text{exp}}, \quad \forall (i,j)\in \mathcal{E}.
\end{align*}}

\vspace{-0.2in}
\subsection{\textsf{FA} Gives an Upper Bound of the Optimal Profit}

In this section, we show that the optimal objective value of the \textsf{FA} provides an upper bound of the long-run
average profit for the \textsf{MDP} under any stationary mechanism. 
For revenue management problems in large networks where exact solutions are intractable, one common approach is to use fluid approximation to get deterministic optimization problems. \cite{gallego1994optimal,gallego1997multiproduct} introduced a fluid approximation method for finite-horizon dynamic pricing problems and proposed static pricing policies that are asymptotically optimal.
Similar approaches have been applied in many subsequent works (e.g., \citealt{cooper2002asymptotic}, \citealt{maglaras2006dynamic}, \citealt{liu2008choice}, \citealt{chen2019efficacy}), and the fluid approximation model gives an upper bound of the optimal dynamic pricing mechanism. A new contribution of our analysis is to show that the fluid approximation model is an upper bound for not only pricing but also general freight market mechanisms in an infinite-horizon setting.

\begin{theorem} \label{theorem: FA provides an upper bound to an acution}
	The optimal value of the fluid problem \textsf{FA} is an upper bound for the long-run average profit of the system under any stationary policy ${\pi}\in \Pi$. 
\end{theorem}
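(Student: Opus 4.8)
The plan is to fix an arbitrary stationary policy $\pi\in\Pi$; by Proposition~\ref{Prop: stationary} the induced chain is positive recurrent, so all expectations below may be taken under its stationary distribution $\nu$ (hence they do not depend on $t$), and it suffices to show that $\gamma^{\pi}$, which then equals $\Ex[G^{\pi}_t(\mathbf{S}_t,\mathbf{D}_t)]$, does not exceed the optimal value of \eqref{FA}. The device is to exhibit a feasible point of \eqref{FA} with objective value at least $\gamma^{\pi}$, namely the vector of steady-state means $d_{ij}:=\Ex[D_{ijt}]$, $\bar{y}_{ij}:=\Ex[Y_{ijt}]$, $\bar{\lambda}_{ij}:=\Ex[S_{ijt}]$ for $(i,j)\in\mathcal{E}$. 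Feasibility is checked directly: taking expectations in $S_{ijt+1}=Z_{ijt}+\Lambda_{ijt+1}$ and using that $\mathbf{Z}_{it}$ is multinomial with parameters $(\sum_{k\in\delta^-(i)}Y_{kit},\mathbf{q}_i)$, hence $\Ex[Z_{ijt}]=q_{ij}\sum_{k\in\delta^-(i)}\Ex[Y_{kit}]$, together with stationarity $\Ex[S_{ijt+1}]=\Ex[S_{ijt}]$, gives exactly \eqref{FA linking constraint}; constraints \eqref{FA demand constraint} and \eqref{FA inflow constraint} follow from the pathwise inequalities $Y_{ijt}\le D_{ijt}$ and $Y_{ijt}\le S_{ijt}$ (the latter from \eqref{eq: sum Y and V = S} and nonnegativity), \eqref{FA non_negative} is immediate, and $\bar{\lambda}_{ij}\ge\lambda_{ij}>0$.

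It remains to compare the \eqref{FA} objective at this point with $\gamma^{\pi}$ term by term. Writing $\tilde{d}_{ijt}:=d_{ij}(r_{ijt})$ for the (random) demand rate induced by $\pi$ in period $t$, the shipper-revenue term satisfies $\Ex[r_{ijt}\,d_{ij}(r_{ijt})]=\Ex[r_{ij}(\tilde{d}_{ijt})\,\tilde{d}_{ijt}]\le r_{ij}(\Ex[\tilde{d}_{ijt}])\,\Ex[\tilde{d}_{ijt}]=r_{ij}(d_{ij})\,d_{ij}$, by concavity of $d\mapsto r_{ij}(d)d$, Jensen's inequality, and $\Ex[\tilde{d}_{ijt}]=\Ex[D_{ijt+1}]=d_{ij}$; the penalty term is an exact match, $\Ex[b_{ij}(D_{ijt}-Y_{ijt})]=b_{ij}(d_{ij}-\bar{y}_{ij})$. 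The only substantial step is the carrier payment, where I claim $\Ex[P_{ijt}]\ge F^{-1}_{ij}(\bar{y}_{ij}/\bar{\lambda}_{ij})\,\bar{y}_{ij}$.

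To prove the payment bound, condition on $(\mathbf{S}_t,\mathbf{D}_t)$. Combining the revenue-equivalence identity of Proposition~\ref{prop:p = a and psi} with the aggregation identities $\Ex[\sum_s A^s_{ijt}(\mathbf{C}_t)\mid\mathbf{S}_t,\mathbf{D}_t]=\Ex[Y_{ijt}\mid\mathbf{S}_t,\mathbf{D}_t]$ and $\Ex[\sum_s P^s_{ijt}(\mathbf{C}_t)\mid\mathbf{S}_t,\mathbf{D}_t]=\Ex[P_{ijt}\mid\mathbf{S}_t,\mathbf{D}_t]$, the conditional expected payment on lane $(i,j)$ equals $\Ex[\sum_s a^s_{ijt}(C^s_{ijt})\psi_{ij}(C^s_{ijt})\mid\mathbf{S}_t,\mathbf{D}_t]$, whereas the per-carrier allocation masses $\Ex[a^s_{ijt}(C^s_{ijt})]$ sum to $m_{ij}:=\Ex[Y_{ijt}\mid\mathbf{S}_t,\mathbf{D}_t]$. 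Minimizing the former subject only to this mass constraint is a relaxation of the true problem (it drops both the per-realization capacity constraint $\sum_s A^s_{ijt}\le D_{ijt}$ and the IC monotonicity of $a^s_{ijt}$, each of which only raises the minimum), and by the i.i.d. symmetry of the carriers it is solved by the common threshold rule that greedily allocates to the smallest reported costs, equivalently to the smallest virtual costs since $\psi_{ij}$ is increasing; using $\psi_{ij}(c)f_{ij}(c)=\frac{d}{dc}\big(cF_{ij}(c)\big)$, its optimal value is $S_{ijt}\,\phi_{ij}(m_{ij}/S_{ijt})$, where $\phi_{ij}(x):=F^{-1}_{ij}(x)\,x=\int_0^{F^{-1}_{ij}(x)}\psi_{ij}(c)f_{ij}(c)\,dc$, and on $\{S_{ijt}=0\}$ we have $m_{ij}=0$ and read this bound as $0$. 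Taking expectations over the state, the paper's hypothesis that $F^{-1}_{ij}(x_1/x_2)x_1$ is convex is precisely the joint convexity of $(s,m)\mapsto s\,\phi_{ij}(m/s)=m\,F^{-1}_{ij}(m/s)$, so Jensen's inequality and the identities $\Ex[m_{ij}]=\Ex[Y_{ijt}]=\bar{y}_{ij}$, $\Ex[S_{ijt}]=\bar{\lambda}_{ij}$ give $\Ex[P_{ijt}]\ge\bar{\lambda}_{ij}\,\phi_{ij}(\bar{y}_{ij}/\bar{\lambda}_{ij})=F^{-1}_{ij}(\bar{y}_{ij}/\bar{\lambda}_{ij})\,\bar{y}_{ij}$. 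Summing the three comparisons over $(i,j)\in\mathcal{E}$ shows that the \eqref{FA} objective at $(\mathbf{d},\bar{\mathbf{y}},\bar{\boldsymbol{\lambda}})$ is at least $\gamma^{\pi}$; since that point is feasible, the optimal value of \eqref{FA} dominates $\gamma^{\pi}$, and as $\pi$ was arbitrary the theorem follows.

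I expect the payment step to be the main obstacle. One must pass carefully from the state-conditional incentive-compatible payment minimization — in which the number of competing carriers is the realized $S_{ijt}$, correlated with the rest of the state — to a single aggregate fluid bound; this requires recognizing that the hypothesis on $F^{-1}_{ij}(x_1/x_2)x_1$ is exactly the perspective-function convexity that drives the final Jensen step, and it requires handling the atom $\{S_{ijt}=0\}$ and the boundary behavior of $s\mapsto s\,\phi_{ij}(m/s)$ there. By contrast, the revenue and penalty comparisons and the feasibility verification are routine.
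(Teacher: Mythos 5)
Your proof is correct and follows essentially the same route as the paper's: feasibility of the steady-state means for \textsf{FA} (the paper's Lemma~\ref{lemma: necessary conditions for any stationary mechanism}), the payment lower bound $\Ex[P_{ij}]\ge F^{-1}_{ij}(\Ex[Y_{ij}]/\Ex[S_{ij}])\Ex[Y_{ij}]$ via revenue equivalence, reduction to a threshold rule, and the perspective-function Jensen step (the paper's Lemma~\ref{lemma: lower bound of P}, which uses a Lagrangian relaxation where you drop constraints directly, to the same effect). Your explicit Jensen treatment of the shipper-revenue term is a small refinement of a step the paper glosses over, but it does not change the argument.
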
 

It is worth noting that although the fluid model (\textsf{FA}) is constructed under the posted price mechanism, Theorem~\ref{theorem: FA provides an upper bound to an acution} holds not only for posted mechanisms but also for \emph{any} direct mechanism that is IC and IR. 
In view of Theorem \ref{theorem: FA provides an upper bound to an acution}, the optimal objective value of \textsf{FA} provides an upper bound for the long-run average profit for the \ref{eq: MDP} under any platform mechanism. As a result, this upper bound can serve as a benchmark to evaluate the performance of any platform mechanism. 
To avoid the trivial case,
we assume in the subsequent sections that the optimal objective value of \textsf{FA} is strictly positive, because otherwise, it means the platform cannot make an operating profit and thus cannot survive in the long run. 
The proof of Theorem \ref{theorem: FA provides an upper bound to an acution} proceeds in two steps. We first show that the constraints in \textsf{FA} are necessary for any mechanism under which the system is stable. Then we show that the optimal value of the \textsf{FA} is an upper bound of the long-run average profit of any platform mechanism $(\mathcal{M}_r, \mathcal{M}_p)$. 
The full proof is relegated to the appendix.

\section{Posted Price Mechanisms} \label{section: posted price mechanism}
In this section, we study a posted price mechanism in which the platform sets prices $\mathbf{r_t}=(r_{ijt}: (i,j)\in \mathcal{E})$ to shippers and payments $\mathbf{p_t}=(p_{ijt}: (i,j)\in \mathcal{E})$ to carriers for transporting a load in period $t$. Among all the posted price mechanisms, we consider a \emph{static pricing} mechanism, where the platform offers a fixed price to the carriers and charges a fixed price to the shippers.  
We show that with a proper choice of the fixed prices, the static posted price mechanism is asymptotically optimal under an asymptotic scaling regime. 
The static pricing mechanism is easy to implement in practice, and our results provide further theoretical support for its effectiveness. 

We consider the following asymptotic regime. Consider a sequence of problem instances $\{\textsf{MDP}^{\theta}\}$ with scaling parameter $\theta \in \{1, 2, ...\}$. In the instance $\textsf{MDP}^{\theta}$, the arrival rates of shipper demands and carriers are equal to $\theta \mathbf{d}$ and $\theta \boldsymbol{\lambda}$, respectively. In other words, the scaling factor $\theta$ can be viewed as a measure of the system size. \rr{Let ($\mathbf{d}^*, \mathbf{\bar{y}}^*,\mathbf{\bar{v}}^*$) be an optimal solution to the \textsf{FA} and let $\gamma^{\textsf{FA}}$ denote the optimal objective value of the \textsf{FA}. Given an optimal solution ($\mathbf{d}^*, \mathbf{\bar{y}}^*,\mathbf{\bar{v}}^*$), the optimal total in-flow of carriers to each node $j$ is given by $\bar{\lambda}^*_j = \sum_{i \in \delta^-(j)} q_{ij} \bar{y}^*_{ij} + \lambda_j$, and denote $\boldsymbol{\bar{\lambda}}^* = (\bar{\lambda}^*_i: i \in \mathcal{N})$. The probability vectors $\mathbf{y}^*$ and $\mathbf{v}^*$ associated with the optimal fluid solution can be derived accordingly by $y^*_{ij} = \bar{y}^*_{ij}/\bar{\lambda}^*_i$ and $v^*_{i} = \bar{v}^*_{i}/\bar{\lambda}^*_i$.} Recall in Section \ref{subsection: convex reformulation} we showed that  $\mathbf{\bar{x}}^* = \mathbf{\bar{y}}^*$ in the optimal solution to the fluid problem \eqref{new FA}. As it shall become clear later, it is sometimes more convenient to use $\mathbf{\bar{x}}^*$ (and correspondingly, $\mathbf{x}^*)$ in our analysis, and therefore we will differentiate between $\mathbf{\bar{x}}^*$ and $\mathbf{\bar{y}}^*$ (correspondingly, $\mathbf{x}^*$ and $\mathbf{y}^*$) by using their respective notations (even though they have the same value under the optimal fluid solution). Finally, let $(\mathbf{{r}}^*,\mathbf{{p}}^*)$ respectively be the prices charged to the shippers and the payments paid to the carriers by the platform corresponding to the optimal fluid solution \rr{($\mathbf{d}^*, \mathbf{\bar{y}}^*,\mathbf{\bar{v}}^*$), where $\mathbf{{r}}^*$ is determined via the inverse shipper demand function, and $\mathbf{{p}}^*$ is given by $p^*_{ij} = \frac{1}{\beta} \left[\ln \left (\frac{\bar{y}^*_{ij}}{\bar{v}^*_{i}} \right)+\alpha_{ij} \right]$ for each $(i,j) \in \mathcal{E}$.}

Our proposed static posted price mechanism, denoted as \textsf{SP}, applies the prices $(\mathbf{{r}}^*,\mathbf{{p}}^*)$ obtained from the optimal solution to the \textsf{FA} in all system states. Given a system state $(\mathbf{S}_t,\mathbf{D}_t)$ and the posted price vector $\mathbf{p}^*=(p^*_{ij}: (i,j)\in \mathcal{E})$, carriers choose to book a load among the available remaining loads following the MNL choice model. We use the superscript $[s]$ to denote the $s^{\mbox{\scriptsize th}}$ carrier that arrives to the marketplace at origin node $i$. Let $D^{[s]}_{ijt}$ denote the number of remaining loads that need to be shipped from node $i$ to node $j$ when the $s^{\mbox{\scriptsize th}}$ carrier arrives to the platform in period $t$, i.e., $D^{[s]}_{ijt} := D_{ijt}-\sum_{s'=1}^{s-1}A^{[s']}_{ijt}$. We define $\delta^{{[s]}+}_t(i):=\{j \in\mathcal{N}: D^{[s]}_{ijt}>0\}$ as the set of destinations still with remaining loads to be transported from origin $i$ when the $s^{\mbox{\scriptsize th}}$ carrier arrives. Then, the choice probabilities of the $s^{\mbox{\scriptsize th}}$ carrier can be defined as follows:
	\begin{equation*}  
		x^{[s]}_{ijt} = \frac{e^{\beta p^*_{ij} -\alpha_{ij}}}{\sum_{k\in \delta^{{[s]}+}_t(i)} e^{\beta p^*_{ik} -\alpha_{ik}}+1}, \quad \forall j\in \delta^{{[s]}+}_t(i) \,\text{ and } \,   w^{[s]}_{it} = \frac{1}{\sum_{k\in \delta^{{[s]}+}_t(i)} e^{\beta p^*_{ik} -\alpha_{ik}}+1}.
\end{equation*}
Under the static posted price mechanism \textsf{SP}, carriers choose to deliver a load among all the available loads only if their choice maximizes their utilities, and otherwise they would leave the marketplace without booking any load. Therefore, the posted price mechanism \textsf{SP} is IC and IR.

For a problem instance with scaling factor $\theta$, let $\gamma^{\textsf{SP}}(\theta)$ denote the long-run average profit under the proposed \textsf{SP} mechanism. The optimal solution to \textsf{FA}($\theta$) is \rr{($\theta\mathbf{d}^*, \mathbf{\theta\bar{y}}^*,\theta\mathbf{\bar{v}}^*$)}, and the optimal objective value of \textsf{FA}($\theta$), denoted as $\gamma^{\textsf{FA}}(\theta)$, is equal to $\theta\gamma^{\textsf{FA}}$. The following theorem establishes the asymptotic optimality of our proposed static posted price mechanism \textsf{SP}.

\begin{theorem} \label{theorem: SI_bound}
	The static posted price mechanism $(\mathbf{{r}}^*,\mathbf{{p}}^*)$ is asymptotically optimal. More specifically, we have
	$$
	\gamma^{\textsf{FA}}(\theta) - \gamma^{\textsf{SP}}(\theta) \le O(\sqrt{\theta}),
	$$
	and therefore ${\gamma^{\textsf{SP}}(\theta)}/{\gamma^{\textsf{FA}}(\theta)} = 1 - O(1/\sqrt{\theta}) \rightarrow 1$ as the scaling factor $\theta$ approaches infinity.
\end{theorem}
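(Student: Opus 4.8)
The plan is to combine the fluid upper bound from Theorem~\ref{theorem: FA provides an upper bound to an acution} with a steady-state analysis of the Markov chain induced by \textsf{SP}. By Proposition~\ref{Prop: stationary} this chain is positive recurrent, and the linear Lyapunov function behind that proposition supplies the integrability needed to identify the long-run average profit with the stationary expectation $\Ex_\pi[G^{\textsf{SP}}(\mathbf{S},\mathbf{D})]$. I would expand this term by term. Under the fixed shipper price $\mathbf{r}^*$ the next-period demand $D_{ij,t+1}$ is always Poisson with mean $\theta d^*_{ij}$, so the shipper-revenue term matches $\theta$ times its fluid value \emph{exactly}; under the fixed carrier price $p^*_{ij}=F^{-1}_{ij}(x^*_{ij})$ each available carrier books with probability $x^*_{ij}$, so the number wanting a load on $(i,j)$ is $B_{ijt}\sim\mathrm{Binom}(S_{ijt},x^*_{ij})$ and the number actually hauling is $Y_{ijt}=\min(B_{ijt},D_{ijt})$. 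Matching the payment and penalty terms against the \textsf{FA} objective, and using $p^*_{ij}\bar y^*_{ij}=F^{-1}_{ij}(\bar y^*_{ij}/\bar\lambda^*_{ij})\bar y^*_{ij}$ (valid since $\bar{\mathbf{x}}^*=\bar{\mathbf{y}}^*$ and $x^*_{ij}=\bar y^*_{ij}/\bar\lambda^*_{ij}$), the gap reduces to
\[
\gamma^{\textsf{FA}}(\theta)-\gamma^{\textsf{SP}}(\theta)=\sum_{(i,j)\in\mathcal{E}}(b_{ij}-p^*_{ij})\bigl(\theta\bar y^*_{ij}-\Ex_\pi[Y_{ijt}]\bigr).
\]
Since $\mathcal{E}$ is finite and the $b_{ij},p^*_{ij}$ are fixed, it then suffices to prove $\bigl|\theta\bar y^*_{ij}-\Ex_\pi[Y_{ijt}]\bigr|=O(\sqrt\theta)$ for every lane.

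For the upper half, $Y_{kit}\le B_{kit}$ gives $\Ex_\pi[Y_{kit}]\le x^*_{ki}\Ex_\pi[S_{kit}]$, so the stationary flow-balance identity $\Ex_\pi[S_{ijt}]=q_{ij}\sum_{k\in\delta^-(i)}\Ex_\pi[Y_{kit}]+\theta\lambda_{ij}$ yields $\Ex_\pi[\mathbf{S}]\le A\,\Ex_\pi[\mathbf{S}]+\theta\boldsymbol{\lambda}$, where $A$ is the nonnegative matrix with $A_{(i,j),(k,i)}=q_{ij}x^*_{ki}$. Because carriers leave permanently with probability $1-\sum_jq_{ij}>0$ and $x^*_{ki}\le1$, the column sums of $A$ are at most $\max_i\sum_jq_{ij}<1$, so $\rho(A)<1$, $(I-A)^{-1}\ge0$, and $\mathbf{s}\mapsto A\mathbf{s}+\theta\boldsymbol{\lambda}$ is a monotone contraction whose unique fixed point is $\theta\bar{\boldsymbol{\lambda}}^*$ (this is precisely constraint \eqref{FA linking constraint}, using $\bar y^*_{ki}=x^*_{ki}\bar\lambda^*_{ki}$). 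Iterating monotonicity on the inequality then forces $\Ex_\pi[\mathbf{S}]\le\theta\bar{\boldsymbol{\lambda}}^*$, hence $\Ex_\pi[Y_{ijt}]\le x^*_{ij}\Ex_\pi[S_{ijt}]\le\theta\bar y^*_{ij}$; in particular each summand in the displayed identity is nonnegative.

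For the matching lower half I would first establish the stationary deviation bound $\Ex_\pi|S_{ijt}-\theta\bar\lambda^*_{ij}|=O(\sqrt\theta)$ through a drift argument for the weighted norm $\|\mathbf{S}-\theta\bar{\boldsymbol{\lambda}}^*\|_w$, with $w$ a nonnegative left Perron eigenvector of $A$. Writing $\Ex[S_{ij,t+1}\mid\mathbf{S}_t]-\theta\bar\lambda^*_{ij}=q_{ij}\sum_k\bigl(h_{ki}(S_{kit})-\theta\bar y^*_{ki}\bigr)$ with $h_{ki}(S):=\Ex[\min(\mathrm{Binom}(S,x^*_{ki}),D_{ki})]$ monotone, $x^*_{ki}$-Lipschitz, and satisfying $h_{ki}(\theta\bar\lambda^*_{ki})=\theta\bar y^*_{ki}-O(\sqrt\theta)$, the conditional mean deviation contracts by the factor $\rho(A)<1$ in the $w$-norm, while the conditional variance of $S_{ij,t+1}$ is $O(\theta+\|\mathbf{S}_t\|_1)$; after absorbing the resulting sublinear term coming from the conditional variance, this gives a geometric drift $\Ex[\|\mathbf{S}_{t+1}-\theta\bar{\boldsymbol{\lambda}}^*\|_w\mid\mathbf{S}_t]\le\rho'\|\mathbf{S}_t-\theta\bar{\boldsymbol{\lambda}}^*\|_w+O(\sqrt\theta)$ with $\rho'<1$, hence $\Ex_\pi\|\mathbf{S}-\theta\bar{\boldsymbol{\lambda}}^*\|_w=O(\sqrt\theta)$. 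The rest is bookkeeping: $\Ex_\pi[S_{ijt}]\le\theta\bar\lambda^*_{ij}$ gives $\Ex_\pi[B_{ijt}]\le\theta\bar y^*_{ij}$, and using $(B-D)^+\le(B-\theta\bar y^*_{ij})^++(\theta\bar y^*_{ij}-D)^+$ together with $\mathrm{Var}(B_{ijt}\mid S_{ijt})\le S_{ijt}/4$, $\bar y^*_{ij}\le d^*_{ij}$, and the deviation bound, one gets $\Ex_\pi[(B_{ijt}-D_{ijt})^+]=O(\sqrt\theta)$, hence $\Ex_\pi[Y_{ijt}]\ge x^*_{ij}\Ex_\pi[S_{ijt}]-O(\sqrt\theta)$; feeding this into the flow balance and inverting $(I-\tilde A)$, with $\tilde A_{(i,j),(k,i)}=x^*_{ij}q_{ij}$ (column sums $<1$, fixed point $\theta\bar{\mathbf{y}}^*$), yields $\Ex_\pi[Y_{ijt}]\ge\theta\bar y^*_{ij}-O(\sqrt\theta)$. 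Together with the upper half, $0\le\theta\bar y^*_{ij}-\Ex_\pi[Y_{ijt}]\le O(\sqrt\theta)$, and the theorem follows from the displayed identity and $\gamma^{\textsf{FA}}(\theta)=\theta\gamma^{\textsf{FA}}>0$.

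The main obstacle is the stationary deviation bound $\Ex_\pi|S_{ijt}-\theta\bar\lambda^*_{ij}|=O(\sqrt\theta)$: one must show that $\sqrt\theta$-scale supply fluctuations do not amplify as carriers recirculate through the network, which requires tracking the deviation through the multinomial routing splits and through the supply--demand matching $\min(\cdot,D)$ simultaneously, and exploiting — via the right weighted norm — the spectral contraction $\rho(A)<1$ caused by carriers exiting the system at every node. Everything else (the exact matching of the revenue term, the two monotone contraction-map arguments for the mean carrier flows, and the elementary Binomial/Poisson deviation estimates) is routine once that bound is in hand.
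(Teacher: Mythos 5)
Your argument is correct, and it reaches the result by a genuinely different route than the paper. The paper's proof lower-bounds $\gamma^{\textsf{SP}}(\theta)$ by writing $Y=X-(X-D)^+$ and then proves a two-part auxiliary lemma: first $\Ex[X_{ij}]-\Ex[Y_{ij}]\le O(\sqrt{\theta})$ via unconditional variance bounds on $X_{ij}$ and $D_{ij}$, and second $\theta\bar{\lambda}^*_{ij}-\Ex[S_{ij}]\le O(\sqrt{\theta})$ by propagating the first bound through a deterministic linear recursion for the carrier counts. You instead start from the exact identity $\gamma^{\textsf{FA}}(\theta)-\gamma^{\textsf{SP}}(\theta)=\sum_{(i,j)}(b_{ij}-p^*_{ij})(\theta\bar{y}^*_{ij}-\Ex_\pi[Y_{ijt}])$, obtain the one-sided inequality $\Ex_\pi[Y_{ij}]\le\theta\bar{y}^*_{ij}$ from the monotone fixed-point structure of the flow-balance system (the same sub-unit column sums that underlie the paper's recursion), and make the stationary concentration bound $\Ex_\pi|S_{ijt}-\theta\bar{\lambda}^*_{ij}|=O(\sqrt{\theta})$ the central lemma, proved by a geometric-drift argument; everything else follows by conditioning on $\mathbf{S}$. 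Your route is somewhat heavier (it needs the drift machinery and the integrability of $\|\mathbf{S}\|$ from Proposition~\ref{Prop: stationary} to close the stationary expectation), but it buys precision at exactly the point where the paper is terse: $\mathrm{Var}(X_{ij})$ equals $\Ex[S_{ij}]x^*_{ij}(1-x^*_{ij})$ plus a term in $\mathrm{Var}(S_{ij})$, and your prior control of the fluctuations of $\mathbf{S}$ is what justifies discarding that second contribution, whereas the paper writes the variance identity without it. Two cosmetic points: a left Perron vector of a reducible nonnegative matrix may have zero entries and then $\|\cdot\|_w$ is only a seminorm — since the column sums of $A$ are uniformly below $\max_i\sum_j q_{ij}<1$ you can simply take $w=\mathbf{1}$; and your final flow-balance inversion for $\Ex_\pi[Y]$ is redundant once you have both $\Ex_\pi[(B_{ijt}-D_{ijt})^+]=O(\sqrt\theta)$ and $\Ex_\pi[S_{ijt}]\ge\theta\bar{\lambda}^*_{ij}-O(\sqrt{\theta})$, since then $\Ex_\pi[Y_{ijt}]=x^*_{ij}\Ex_\pi[S_{ijt}]-\Ex_\pi[(B_{ijt}-D_{ijt})^+]\ge\theta\bar{y}^*_{ij}-O(\sqrt{\theta})$ directly.
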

The proof of Theorem~\ref{theorem: SI_bound} can be found in the appendix.

\section{Hybrid Mechanisms}\label{section: hybrid mechanisms}

\rr{In addition to posted price mechanisms, auctions are also adopted in the freight industry to match shippers and carriers \citep{figliozzi2005impacts,caplice2007electronic}. Digital freight platforms have also introduced in-app bidding, especially on the carrier side \citep{uber2020}. 
	Unlike posted price mechanisms where carriers cannot negotiate prices with the platform, auction mechanisms allow the platform to collect information from carriers before it decides how to allocate loads and set payment amounts. Therefore, auction mechanisms have the potential to generate higher profit for the platform than posted price mechanisms. However, using auctions comes at the expense of longer waiting time for carriers to receive load booking confirmations, as every carrier must wait until the end of a period to learn the result of the auction; in contrast, in the posted price mechanism, carriers can confirm a load booking instantly without waiting for other carriers' bids.
}

To counter the downside of auctions, we consider a hybrid mechanism in which carriers can either book a load instantly by accepting a posted price offered by the platform or by bidding in an auction for the load. 
Such hybrid mechanisms have become increasingly popular in digital freight marketplaces (e.g., \citealt{uber2020}, \citealt{convoy2023}), as shown earlier in Figure~\ref{Figure: Uber}, which serve as an attractive strategy for platforms to balance the trade-off between platform profit and carrier waiting time.

\rr{We consider a two-stage decision process for carriers under hybrid mechanisms. More specifically, each carrier chooses a lane according to an MNL model based on the posted prices in the first stage, and chooses to submit a bid or accept the posted price for a load on the chosen lane in the second stage. Therefore, auctions are applied to each lane and each period separately. This is aligned with the auction mechanisms used by freight platforms in practice, which would greatly simplify the implementation of the carrier-side auctions since it is well known in the mechanism design literature that multi-item auctions are notoriously difficult to analyze.}

\rr{Before formally defining the hybrid mechanism, we first introduce the auction mechanism that is used in the second stage. We use a sealed-bid uniform price auction with a reserve price for each lane and each period.	Specifically, for each lane $(i,j)$ in period $t$, loads are allocated to the lowest $D_{ijt}$ bids below the reserve price, where $D_{ijt}$ is the demand on the lane in this period. Each winning carrier is paid the lower of the $(D_{ijt}+1)^{\mbox{\scriptsize th}}$ lowest bid and the reserve price, with ties broken randomly. This auction format is simple to implement and intuitive. In practice, freight platforms typically use discriminatory auctions where carriers' payments are equal to their own bids rather than uniform price auctions. However, by the revenue equivalence principle, the two auction mechanisms are essentially equivalent, because given the same reserve price, they generate the same allocation outcomes and expected carrier payments (Proposition \ref{prop:p = a and psi}).
}

\rr{We now formally define our second-stage uniform price auction mechanism. At the beginning of time period $t$, each carrier \rr{$s \in \mathcal{S}_{it}$ first chooses a lane $(i,j)\in \mathcal{E}$ and then submits a bid ${C}^s_{ijt}$ for loads on lane $(i,j)$.}
	After receiving bids from the carriers, the platform makes allocation decisions using the following optimization problem and a reserve price $\xi_{ij}$:
	\begin{align}
		\mathcal{J}_{ij}(\mathbf{C}_{ijt}) := \min_{\mathbf{A}_{ijt},Y^0_{ijt}} &\sum_{s\in\tilde{\mathcal{S}}_{ijt}}C^s_{ijt} A^s_{ijt}(\mathbf{C}_{ijt})+\xi_{ij} Y^0_{ijt}(\mathbf{C}_{ijt}) \label{obj: decomposed auction allocation problem}\\
		\mbox{s.t. }  
		& 0 \le A^s_{ijt}(\mathbf{C}_{ijt})\le 1, \qquad\forall s\in \mathcal{\tilde S}_{ijt},\nonumber \\
		&\sum_{s\in\mathcal{\tilde S}_{ijt}} A^s_{ijt}(\mathbf{C}_{ijt})+Y^0_{ijt}(\mathbf{C}_{ijt})= D_{ijt},\nonumber \\
		&Y^0_{ijt}(\mathbf{C}_{ijt})\ge 0. \nonumber
	\end{align}
	\noindent A few remarks are in order. 
	Due to the network structure and the multi-period dynamics, in the optimal auction, the allocation of loads on a given lane may depend on bids on other lanes.
	However, in Eq \eqref{obj: decomposed auction allocation problem}, the allocation decisions $\mathbf{A}_{ijt}$ are determined separately on each lane and only depend on the bidding information $\mathbf{C}_{ijt}$ submitted by the carriers in $\mathcal{\tilde S}_{ijt}$, where $\tilde{\mathcal{S}}_{ijt}$ denotes the set of carriers in $\mathcal{S}_{it}$ who have chosen lane $(i,j)$. The variable $Y^0_{ijt}$ in the objective function \eqref{obj: decomposed auction allocation problem} represents the number of ``dummy'' bidders who bid at the reserve price $\xi_{ij}$. This ensures that carriers whose submitted bids are higher than the reserve price will not receive any load allocation. The first constraint requires that each carrier can be allocated at most one load. Notice that Eq \eqref{obj: decomposed auction allocation problem} is an assignment problem, so there always exists an integral optimal solution.
}

\rr{After the allocation decisions are set, the payments to carriers are determined as follows.
	Let $\mathbf{A}^*_{ijt}(\mathbf{C}_{ijt})$ denote the allocation of loads to carriers in $\mathcal{\tilde S}_{ijt}$ in the optimal solution to $\mathcal{J}_{ij}(\mathbf{C}_{ijt})$. Notice that such an optimal solution always exists because the objective function \eqref{obj: decomposed auction allocation problem} is bounded from above and there always exists a feasible solution ($\mathbf{A}_{ijt}(\mathbf{C}_{ijt})=\mathbf{0},{Y}^0_{ijt}(\mathbf{C}_{ijt})={D}_{ijt}$) to the above optimization problem. With the allocation rule $\mathbf{A}^*_{ijt}(\mathbf{C}_{ijt})$, the payment to a carrier $s$ in $\mathcal{\tilde S}_{ijt}$, denoted as $P^{s}_{ijt}(\mathbf{C}_{ijt})$, is given by the following payment rule:
	\begin{equation} \label{M-VCG: payment rule}
		P^{s}_{ijt}(\mathbf{C}_{ijt}) = C^s_{ijt} {A}^{s*}_{ijt}(\mathbf{C}_{ijt}) + \mathcal{J}_{ij}(\mathbf{C}^{-s}_{ijt}) - \mathcal{J}_{ij}(\mathbf{C}_{ijt}), \quad \forall s\in\mathcal{\tilde S}_{ijt},\forall ( i,j)\in\mathcal{E}.
	\end{equation}
	}
\rr{
We now elaborate how the allocation decisions obtained from optimization problem \eqref{obj: decomposed auction allocation problem} and the payment rule in \eqref{M-VCG: payment rule} characterize a uniform price auction, and we provide an example in Appendix \ref{appendix: uniform price auction example}. To simplify notations, we  drop the $ijt$ indices and consider a given lane with $|\tilde{\mathcal{S}}| := S$ carriers competing for $D$ loads, and the reserve price is equal to $\xi$. Let $\tilde{\mathcal{W}} \subseteq \tilde{\mathcal{S}}$ and $\tilde{\mathcal{L}} \subseteq \tilde{\mathcal{S}}$ be the subset of winning and losing bidders in the auction as prescribed by \eqref{obj: decomposed auction allocation problem} and \eqref{M-VCG: payment rule}. Let $\mathbf{C}$ denote the carriers' true opportunity cost vector, and we denote the $i^{\mbox{\scriptsize th}}$ lowest bid as $C^{(i)}$. First, we consider a losing bidder $j \in \tilde{\mathcal{L}}$. It is clear that $\mathcal{J}(\mathbf{C}^{-j}) = \mathcal{J}(\mathbf{C})$, since a losing bidder who does not receive any load allocation will not affect the auction outcome if he is excluded from the auction. Then from \eqref{M-VCG: payment rule}, we have $P^j = 0$ for any $j \in \tilde{\mathcal{L}}$. This is consistent with the outcome of a uniform price auction in which all losing bidders have zero payment. Next, we consider a winning bidder $i \in \tilde{\mathcal{W}}$ with $A^{i*}(\mathbf{C}) = 1$. If $\xi \ge C^{(D+1)}$, then it is easy to see that 
$\mathcal{J}(\mathbf{C}) = \sum_{k=1}^D C^{(k)}$. The load originally allocated to bidder $i$ will be allocated to the bidder with opportunity cost $C^{(D+1)}$ when $i$ is removed from the auction, and $\mathcal{J}(\mathbf{C}^{-i}) =\mathcal{J}(\mathbf{C}) - C^{i} + C^{(D+1)}$. Therefore, $P^i = C^i + (\mathcal{J}(\mathbf{C}) - C^{i} + C^{(D+1)}) - \mathcal{J}(\mathbf{C}) = C^{(D+1)}$. On the other hand, if $\xi < C^{(D+1)}$, then we have $Y^{0*}(\mathbf{C}^{-i}) = Y^{0*}(\mathbf{C}) + 1$. In other words, removing the winning bidder $i$ from the auction would make his load allocated to a ``dummy'' bidder who bids at $\xi$. Then it is easy to check that $\mathcal{J}(\mathbf{C}^{-i}) =\mathcal{J}(\mathbf{C}) - C^{i} + \xi$. Therefore, $P^i = C^i + (\mathcal{J}(\mathbf{C}) - C^{i} + \xi) - \mathcal{J}(\mathbf{C}) = \xi$. Combining the above two cases, we have $P^i = \min (\xi, C^{(D+1)})$, which is exactly the payment outcome under a uniform price auction.
}

To fully characterize the uniform price auction, we still need to specify the value of the reserve prices. 
Introducing reserve prices may improve the platform's expected profit, since a reserve price imposes an upper bound on the payment to the carriers. For our multi-period multi-location marketplace model, however, a carefully chosen reserve price must also take into account how the allocation decisions in one lane will affect the states of other lanes in the future. We propose the following uniform price auction, denoted as \textsf{AUC}, with a reserve price $\xi^*_{ij}$ for each origin-destination pair $(i,j) \in \mathcal{E}$: 
\begin{equation} \label{eq: xi^* def in psi}
	\xi_{ij}^* = \max\{\psi_{ij}^{-1}(b_{ij}), p^*_{ij}\},
\end{equation} 
where $\psi_{ij}(\cdot)$ is the virtual cost function defined in Eq~\eqref{eq:psi-def}.

\rr{Now we can define our hybrid mechanism based on the static posted price mechanism \textsf{SP} and the uniform price auction mechanism \textsf{AUC}. One challenge for analyzing hybrid mechanisms is that, even if both \textsf{SP} and \textsf{AUC} are IC and IR, the hybrid mechanism that is a combination of these two mechanisms may not be IC in general.}   To see this, consider a carrier whose true opportunity cost is lower than the posted price. If the carrier reports his opportunity cost truthfully, it will be assigned a load instantly under the \textsf{SP} mechanism with a payment equal to the posted price. 
However, if it turns out that the payout from the auction (which is determined at the end of this period) is higher than the posted price, the carrier may be better off by reporting untruthfully in order to join the auction.

We propose a hybrid mechanism \textsf{HYB} as follows. Similar to the \textsf{SP} mechanism, the \textsf{HYB} mechanism sets the same shipper-side price $\mathbf{r^*}$ as in Section~\ref{section: posted price mechanism}. On the carrier side, consider a carrier who arrives in the marketplace \rr{at node $i$ and chooses to deliver a load} from node $i$ to node $j$ in period $t$. If the submitted bid is less than or equal to the posted price $p^*_{ij}$, then this carrier is assigned a load immediately and the platform guarantees that the payment that carrier would receive is at least $p^*_{ij}$, with the exact payment amount to be determined at a later time. On the other hand, if the submitted bid is higher than $p^*_{ij}$, then this carrier will wait to join an auction with the result to be determined at a later time. An auction will be conducted among all the available carriers if the total number of carriers whose submitted bid does not exceed $p^*_{ij}$ is no more than the demand $D_{ijt}$, and the format of the auction is the uniform price auction \textsf{AUC} with reserve price $\xi^*_{ij}$ defined in \rr{Eq~\eqref{eq: xi^* def in psi}}. More specifically, if the number of carriers who have confirmed a load allocation under \textsf{SP} is smaller than $D_{ijt}$, then the \textsf{AUC} auction will be conducted at the end of the period. If all the loads have already been booked (under \textsf{SP}) upon a carrier's arrival and the bid of this newly arrived carrier is no more than $p^*_{ij}$, the platform makes payment $p^*_{ij}$ to those carriers who have received a load allocation (under \textsf{SP}) and all future carriers \rr{originating from node $i$ can no longer choose the lane $(i,j)$ and will choose another available lane according to the MNL model}. Finally, if all the loads have already been booked (under \textsf{SP}) and there is no additional carrier with a submitted bid smaller than or equal to $p^*_{ij}$ until the end of the period, then the platform conducts the \textsf{AUC} auction and makes payment to those carriers who have confirmed a load allocation (under \textsf{SP}) according to the \textsf{AUC} payment rule.

We now formally define the carrier-side allocation rule and the payment rule under the hybrid mechanism \textsf{HYB}. Let $X^{\textsf{SP}}_{ijt}(\mathbf{C}_{ijt})$ denote the number of carriers who would choose to deliver a load from node $i$ to node $j$ in period $t$ under \textsf{SP} with posted price $p^*_{ij}$ when the opportunity cost vector submitted by the carriers is $\mathbf{C}_{ijt}$. The allocation rule of the \textsf{HYB} mechanism is defined as 
\begin{equation} \label{def: HYB allocation rule}
	A^{s,\textsf{HYB}}_{ijt}(\mathbf{C}_{ijt}) = \begin{cases}
		{A}^{s,\textsf{SP}}_{ijt}(\mathbf{C}_{ijt}), \text{ if }X^{\textsf{SP}}_{ijt}(\mathbf{C}_{ijt}) > D_{ijt},\\
		{A}^{s,\textsf{AUC}}_{ijt}(\mathbf{C}_{ijt}),\text{ otherwise}\\
	\end{cases}
\end{equation}
where $A^{s,\textsf{SP}}_{ijt}(\mathbf{C}_{ijt})$ and ${A}^{s,\textsf{AUC}}_{ijt}(\mathbf{C}_{ijt})$ represent the allocations under the \textsf{SP} and \textsf{AUC} mechanisms defined in Section \ref{section: posted price mechanism} and \rr{Eq~\eqref{obj: decomposed auction allocation problem}}, respectively. The payment rule of \textsf{HYB} is defined as
\begin{equation} \label{def: HYB payment rule}
	P^{s,\textsf{HYB}}_{ijt}(\mathbf{C}_{ijt}) = \begin{cases}
		p^*_{ij}{A}^{s,\textsf{HYB}}_{ijt}(\mathbf{C}_{ijt}), \text{ if }X^{\textsf{SP}}_{ijt}(\mathbf{C}_{ijt}) > D_{ijt},\\
		{C}^s_{ijt}{A}^{s,\textsf{HYB}}_{ijt}(\mathbf{C}_{ijt}) + \mathcal{J}_{ij}(\mathbf{C}^{-s}_{ijt}) - \mathcal{J}_{ij}(\mathbf{C}_{ijt}),\text{ otherwise}\\
	\end{cases} 
\end{equation}
where $\mathcal{J}_{ij}(\cdot)$ represents the optimal objective value of the allocation problem in \textsf{AUC} as defined in Eq~\eqref{obj: decomposed auction allocation problem}.

A few remarks are in order. First, we notice that when $X^{\textsf{SP}}_{ijt}(\mathbf{C}_{ijt}) > D_{ijt}$, all the loads are allocated under \textsf{SP} and the payment to each carrier is $P^{s,\textsf{HYB}}_{ijt}(\mathbf{C}_{ijt})=p^*_{ij}$. Otherwise, the payment under the \textsf{HYB} mechanism is given by the payment under \textsf{AUC}, $P^{s,\textsf{HYB}}_{ijt}(\mathbf{C}_{ijt})=P^{s,\textsf{AUC}}_{ijt}(\mathbf{C}_{ijt})$. It is easy to see that $P^{s,\textsf{AUC}}_{ijt}(\mathbf{C}_{ijt})=\min({C}^{(D_{ijt}+1)}_{ijt}, \xi_{ij}^*)$, where ${C}^{(D_{ijt}+1)}_{ijt}$ is the $(D_{ijt}+1)^{\scriptsize \mbox{th}}$ lowest opportunity cost in the bid vector $\mathbf{C}_{ijt}$. Since in this case we have $X^{\textsf{SP}}_{ijt}(\mathbf{C}_{ijt}) \le D_{ijt}$, it then implies that ${C}^{(D_{ijt}+1)}_{ijt} \ge p^*_{ij}$, and hence $P^{s,\textsf{AUC}}_{ijt}(\mathbf{C}_{ijt}) \ge p^*_{ij}$ since $\xi^*_{ij} \ge p^*_{ij}$ by \eqref{eq: xi^* def in psi}. Therefore, the payment to a carrier who receives a load allocation under \textsf{HYB} is at least the posted price $p^*_{ij}$, i.e., $P^{s,\textsf{HYB}}_{ijt}(\mathbf{C}_{ijt})\ge p^*_{ij}$ when ${A}^{s,\textsf{HYB}}_{ijt}(\mathbf{C}_{ijt})=1$. Intuitively, this ensures that a carrier whose true opportunity cost is no more than $p^*_{ij}$ does not have the incentive to bid untruthfully, since he will be able to receive a higher payment based on the auction outcome in case the carrier supply is insufficient. As shown in Lemma \ref{lemma: HYB is IC and IR} below, the hybrid mechanism \textsf{HYB} is IC and IR. 

\begin{lemma}\label{lemma: HYB is IC and IR}
	The \textsf{HYB} mechanism is IC and IR.
\end{lemma}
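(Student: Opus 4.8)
The plan is to establish incentive compatibility and individual rationality of \textsf{HYB} by a direct deviation argument. Fix a carrier $s\in\mathcal{S}_{ijt}$ and condition on the realized demand $D_{ijt}$, the within-period arrival order, the tie-breaking randomness, and the truthful reports $\mathbf{C}^{-s}_{ijt}$ of all the other carriers; I will show that $s$'s realized utility, as a function of its own reported cost $c$, is maximized at $c=C^s_{ijt}$, and then take expectations over the conditioning to obtain IC. Let $m$ denote the number of the other carriers whose submitted cost is at most $p^*_{ij}$. The starting observation is that \textsf{HYB} executes in the ``\textsf{SP} branch'' (when $X^{\textsf{SP}}_{ijt}>D_{ijt}$) or the ``\textsf{AUC} branch'' (when $X^{\textsf{SP}}_{ijt}\le D_{ijt}$), where $X^{\textsf{SP}}_{ijt}$ equals $m+1$ if $s$ reports a cost at most $p^*_{ij}$ and equals $m$ otherwise; consequently the branch depends on $s$'s report only when $m=D_{ijt}$, and in that case the \textsf{SP} branch is used precisely when $s$ reports at most $p^*_{ij}$.

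The argument then splits into two regimes. If $m\le D_{ijt}-1$, every report of $s$ leaves \textsf{HYB} in the \textsf{AUC} branch, in which the pair $\bigl(A^{s,\textsf{HYB}}_{ijt},P^{s,\textsf{HYB}}_{ijt}\bigr)$ as a function of $s$'s report coincides exactly with the allocation and payment of the stand-alone \textsf{AUC} mechanism of Section~\ref{section: auction mechanisms}, so Lemma~\ref{lemma: VCG is IC and IR} (IC of \textsf{AUC}) immediately gives that $c=C^s_{ijt}$ is optimal. If $m\ge D_{ijt}$, I split on the sign of $C^s_{ijt}-p^*_{ij}$. When $C^s_{ijt}\le p^*_{ij}$, truthful reporting places $s$ in the \textsf{SP} price pool, where it is assigned a load by first-come-first-served at payment exactly $p^*_{ij}\ge C^s_{ijt}$ (the assignment depending on the arrival order, not on the reported value) or is not assigned at all, so its utility is nonnegative; any report above $p^*_{ij}$ removes $s$ from the pool, and then---whether \textsf{HYB} remains in the \textsf{SP} branch or flips to the \textsf{AUC} branch---the at least $D_{ijt}$ competing bids at or below $p^*_{ij}$ crowd $s$ out, yielding utility $0$; hence truthful reporting is weakly best. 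When $C^s_{ijt}>p^*_{ij}$, truthful reporting gives $s$ utility $0$ (its bid exceeds all $D_{ijt}$ competing bids in the \textsf{AUC} branch, and it is outside the price pool in the \textsf{SP} branch), whereas any report at most $p^*_{ij}$ puts $s$ in the price pool, where it may be assigned a load at payment $p^*_{ij}<C^s_{ijt}$, for nonpositive utility; hence truthful reporting is again weakly best. The economic content of \textsf{HYB}'s payment rule is exactly what makes this work: an \textsf{SP}-pool carrier is promised at least $p^*_{ij}$ and in fact receives the weakly higher uniform auction price whenever carrier supply turns out short, so a low-cost carrier never gains by defecting to the auction channel.

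Individual rationality follows from the same facts under truthful reporting: an allocated carrier is paid exactly $p^*_{ij}\ge C^s_{ijt}$ in the \textsf{SP} branch (it enters the pool only if $C^s_{ijt}\le p^*_{ij}$) and is paid at least its own bid in the \textsf{AUC} branch by IR of \textsf{AUC}, while an unallocated carrier has utility $0$. I expect the main obstacle to be the careful bookkeeping around the boundary case $m=D_{ijt}$, where $s$'s report can simultaneously move it out of the price pool and flip the mechanism between its two sub-mechanisms; one must in particular be precise that the \textsf{SP}-branch allocation genuinely ignores the reported value (depending only on arrival order and tie-breaking), so that a carrier cannot shade its bid to improve the chance of an immediate assignment, after which all remaining deviations reduce to the already-established incentive compatibility of \textsf{AUC}.
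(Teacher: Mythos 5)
Your proof is correct and takes essentially the same route as the paper's: a direct deviation argument that reduces to the established IC/IR of \textsf{AUC} in the auction regime and to the fixed payment $p^*_{ij}$ (independent of the reported value) in the posted-price regime. The only difference is bookkeeping --- you case on the number $m$ of competing low bids and on the carrier's type relative to $p^*_{ij}$, while the paper cases on the realized allocation and the active branch under the deviating report --- but the substance, including the careful handling of the boundary where the report flips the mechanism between its two sub-mechanisms, is the same.
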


We next present our main result in this section. \rr{We show that \textsf{HYB} outperforms the static posted price mechanism \textsf{SP} in terms of maximizing the long-run average profit}.  Together with Theorem \ref{theorem: SI_bound}, this immediately implies the asymptotic optimality of \textsf{HYB}.

\begin{theorem} \label{theorem: asymptotically optimal HYB auction}
	The objective values of the static posted price mechanism and the hybrid mechanism satisfy
	\begin{equation} \label{ineq Thm 4: HYB asymptotic optimal}
		\gamma^{\textsf{SP}} \le \gamma^{\textsf{HYB}}.  
	\end{equation}
\end{theorem}

\rr{While the detailed proof of Theorem \ref{theorem: asymptotically optimal HYB auction} is relegated to the appendix, we provide a few remarks about the intuition behind this result and a road map for the proof. We compare the long-run average profits between \textsf{SP} and \textsf{HYB} by the coupling method. We show by induction that there exists a coupling of available carriers $(\widetilde{S}^{\textsf{SP}}_{ijt},\widetilde{S}^{\textsf{HYB}}_{ijt})$ such that the number of available carriers under \textsf{SP} is no more than that under \textsf{HYB} for each lane and in each period, i.e., $\widetilde{S}^{\textsf{SP}}_{ijt}\le \widetilde{S}^{\textsf{HYB}}_{ijt}$ for all $(i,j)\in\mathcal{E}$ and for all $t$. The intuition is that more carriers actually transport a load under \textsf{HYB} compared to under \textsf{SP}, because \textsf{HYB} allows carriers to receive a higher payment than the posted price. With this coupling of carriers, we then compare the cost under the two mechanisms. For a specific lane, if there are enough carriers to take all the loads under \textsf{SP}, then there must be enough carriers who accept the posted price under \textsf{HYB} in the aforementioned coupling. Therefore, the costs under \textsf{SP} and \textsf{HYB} are identical in this case. On the other hand, if the number of carriers is fewer than the demand under \textsf{SP}, then one can expect that the penalty is higher under \textsf{SP} and the payment is higher under \textsf{HYB}. In this case, we construct an optimization problem for each lane such that \textsf{HYB} is an optimal solution to that problem, while \textsf{SP} provides a feasible solution to that problem, and therefore \textsf{SP} performs worse than \textsf{HYB}.}

\rr{Next, we bound the gap between posted price and hybrid mechanisms in the asymptotic regime defined in Section~\ref{section: posted price mechanism}, in which
the arrival rates of loads and carriers are scaled by the same factor $\theta$.
Theorem~\ref{theorem: SI_bound} implies that the gap between the posted price mechanism and the optimal mechanism is no greater than $O(\sqrt{\theta})$. We show that this bound is tight.
\begin{theorem}\label{theorem: gap between posted price and auction}
	There exists a problem instance such that
	\[
	\gamma^{\textsf{HYB}}(\theta) - \gamma^{\textsf{SP}}(\theta) \geq \Omega(\sqrt{\theta}),
	\] 
	and therefore ${\gamma^{\textsf{SP}}(\theta)}/{\gamma^{\textsf{HYB}}(\theta)} = 1 - \Omega(1/\sqrt{\theta})$.
\end{theorem}
The above result shows that although both the posted price and hybrid mechanisms are asymptotically optimal as the scaling factor $\theta$ approaches infinity, hybrid mechanisms can be especially beneficial in markets with low demand and few carriers.
}

\rr{In what follows, we present some key ideas of the proof. The lower bound between \textsf{HYB} and \textsf{SP} is derived from a problem instance characterized by perfect symmetry in network structure and problem parameters, where each node is connected to $k$ destinations and all the model parameters are identical across nodes and origin-destination pairs. Using a coupling method, we establish a lower bound on the performance gap between \textsf{HYB} and \textsf{SP} that is proportional to the expectation of the minimum of two random variables: (a) the unmet demand from \textsf{SP} carriers on each lane, and (b) the number of carriers whose opportunity cost exceeds the posted price but is less than the reserve price on each lane. We denote this minimum as $D^{\textsf{HYB-SP}}$, representing the demand that can be fulfilled by the platform in \textsf{HYB} but incurs a penalty cost in \textsf{SP}. To bound $D^{\textsf{HYB-SP}}$, we apply Markov's inequality to show that the expectation of $D^{\textsf{HYB-SP}}$ is greater than $\sqrt{\theta}$  multiplied by the tail probability $\Pr(D^{\textsf{HYB-SP}} \geq \sqrt{\theta})$. We then utilize the Berry-Esséen inequality, a well-known error estimation method in the context of the Central Limit Theorem, to derive the asymptotic behavior of this tail probability, which is shown to be greater than a $\text{constant} + O(1/\sqrt{\theta})$. Therefore, the performance gap between \textsf{HYB} and \textsf{SP} is lower bounded by $ \sqrt{\theta} \cdot \text{constant} + O(1)$, which is clearly $\Omega(\sqrt{\theta})$.
}

\rr{
The construction and the analysis of the aforementioned lower bound instance also provide valuable insights into the types of problem instances where the hybrid mechanism can significantly outperform the posted price mechanism.  In particular, the condition $\Ex \left[D^{\textsf{HYB-SP}} \right] \ge \Omega(\sqrt{\theta})$ holds, indicating that the hybrid mechanism is especially advantageous and should be prioritized, when the following condition holds 
$$
 \beta b-\alpha+\ln k  + \ln([\lambda/d-1]^+) - \frac{1}{[\lambda/d-1]^+}> 1. 
$$ 
In the above inequality, $b$ is the penalty cost, $k$ is the degree of each node, $\alpha$ is carriers' average cost for transporting a load on each lane, $\lambda$ is the external carrier arrival rate, and $d$ is the customer demand date. Therefore, besides the obvious market size factor, the hybrid mechanism should be more preferable if the penalty cost of unsatisfied demand is high, the node degree is high, the carriers' average cost is low, or the ratio of carrier supply to the customer demand is high.
}

\section{Numerical Studies} \label{section: case study}
In this section, we conduct a case study to provide further insights regarding the performance of different mechanisms. In our numerical studies, we use the freight transportation data \citep{census2017} and the national trucking rates from DAT Freight \& Analytics \citep{dat2022} to calibrate our model parameters. The 2017 federal government data include average delivery miles and volumes (tons) for each state-level O-D pair, and the 2022 national flatbed rates include regional levels rates. The details of data sources and parameter estimation are relegated to the appendix.

In our numerical studies, we relax the travel time assumption in the model so that the travel times of different lanes are heterogeneous and not necessarily equal to one period. 
Our data set has information about the distance (average miles) for each lane (O-D pair). We assume each period is equal to one day. To obtain the number of transportation periods needed for each lane, we divide the average miles for each lane by 500 miles, which is about the maximum distance that a truckload driver can make under federal regulation, and round up the value to the nearest integer. Therefore, in our case study, the travel times of different lanes are heterogeneous and not necessarily equal to one period. Next, we calculate the daily demand rate $d_{ij}$ for each O-D pair $(i,j)$ as follows:
\[
\text{$d_{ij}$ = (delivery volumes on lane $(i,j)$ per year/365)/Container Volume} \times \text{Market Share},
\]	 
where the container volume is set to 20 tons and the market share of a platform is set \rr{around} $1.0\%$ (the market share of Uber Freight is approximately $1.0\%$ in the Transportation \& Fleet Management sector\footnotemark\footnotetext{\url{https://enlyft.com/tech/products/uber-freight}, accessed February 10, 2025.}). In our numerical studies, we exclude lanes with extremely small daily demand rates (i.e., smaller than 0.2). Our final data set includes the daily demand rate, the average miles, and the transportation period for \rr{1074} lanes in 48 states within the United States.

We use simulation to generate independent sample paths, each with a total number of $T = 1000$ time periods. For each sample path, the first $T_0 = 200$ time periods are discarded and the performance of the system under a given mechanism (e.g., average cost) is evaluated against the remaining $T - T_0$ time periods. Furthermore, we set the number of carriers at the beginning of period 1 as \rr{$S_{i1}=\ceil{\bar{\lambda}^*_{i}\cdot \text{Market Share}}$ for each node $i$}, where \text{Market Share} is the percentage of loads transacted on a focal platform and $\bar{\lambda}_{i}^*$ is the optimal solution to the fluid problem \textsf{FA}. This helps to reduce the number of iterations to reach the stationary distribution. \rr{In our numerical simulation, model parameters are estimated based on real freight data from Census Bureau and DAT Freight \& Analytics, which provide U.S. mode data that includes information on annual shipment volumes and average mile per shipment per each geographical origin-destination pair, and national flatbed rates. Details about our data source, parameter estimation process, and model calibration can be found in Appendix \ref{appendix: data}.}

\subsection{Cost Gap Ratio and Booking Channel} \label{subsection: cost gap and AWT}

\rr{Our first set of numerical experiments compares the performance of the static posted price mechanism \textsf{SP} and the hybrid mechanism \textsf{HYB}. In our two-stage model, both  \textsf{SP} and \textsf{HYB} share the same shipper-side mechanism, which uses the same static price $\mathbf{r}^*$ obtained from the fluid model. As a result, profit maximization is essentially equivalent to cost minimization when evaluating different mechanisms. Therefore, we mainly focus on the carrier-side mechanisms to compare their performance from a cost minimization perspective. }

The first performance metric that we consider is the \emph{cost gap ratio}. More specifically, the \emph{cost gap ratio} of a given policy $\pi\in \Pi$ is defined as 
$$
\frac{\kappa^{\pi}-\kappa^{\textsf{FA}}}{\kappa^{\textsf{FA}}}.
$$ 
In the above cost gap ratio, $\kappa^{\pi}$ denotes the long-run average cost incurred by the platform under policy $\pi$: 
\begin{equation} \label{def: kappa_ij under pi}
	\kappa^{\pi}:=\sum_{(i,j)\in\mathcal{E}}\Ex[P^{\pi}_{ij}+b_{ij}(D_{ij}-Y^{\pi}_{ij})],
\end{equation} 
which consists of payments made to the carriers on the platform and penalty costs incurred due to unsatisfied demand (if any).

In addition to the cost gap ratio, we are also interested in the \emph{\textsf{SP} ratio}, which captures carriers' booking channel selection behavior. More specifically, the \emph{\textsf{SP} ratio} under a given policy is defined as the percentage of carriers who confirmed a load booking immediately upon their arrival through posted pricing, among all carriers who delivered a load. Under the posted price mechanism, all carriers can confirm a load instantly and the \textsf{SP} ratio is equal to one. 
For the hybrid mechanism, the \textsf{SP} ratio is somewhere between zero and one, which reflects the proportion of carriers who do not need to wait and are able to book a load instantly upon arrival.

Table \ref{Table: simulation results with the basic setting} summarizes the cost gap ratio and the \textsf{SP} ratio under \textsf{SP} \rr{and \textsf{HYB}}. We observe that the gap between the simulated long-run average cost under \rr{both} policies and the fluid bound decreases as the platform's market share (and hence the system size) becomes larger, which is consistent with our findings in Theorems \ref{theorem: SI_bound}-\ref{theorem: asymptotically optimal HYB auction}. Moreover, our numerical results suggest that the \textsf{SP} ratio under \textsf{HYB} increases and becomes closer to one as the market size grows. This implies that most carriers can confirm a booking instantly and do not have to wait for load confirmation under the \textsf{HYB} mechanism. In view of this, the \textsf{HYB} mechanism can be an attractive alternative for platforms that care about both their cost performance and the carriers' waiting time experience.

\begin{table}[!htb]
	\centering
    \rr{
	\begin{tabular}{c|cc|ccc}
		\toprule
		&\multicolumn{2}{c|}{Cost Gap Ratio($\%$)} &\textsf{SP} Ratio (\%) \\\hline
		\text{Market Share}&\textsf{SP}	&\textsf{HYB}  & \textsf{HYB}\\ 
		0.1\%	&	37.21 	&	14.05 	&	87.30 	\\
		0.5\%	&	24.39 	&	7.37 	&	92.73 	\\
		1.0\%	&	19.55 	&	5.27 	&	94.46 	\\
		5.0\%	&	10.10 	&	1.55 	&	97.14 	\\
		\bottomrule
	\end{tabular}
    }
	\caption{\rr{Cost Gap Ratio under \textsf{SP} and \textsf{HYB}.} } 
	\label{Table: simulation results with the basic setting}
\end{table}

\vspace{-0.2in}
To gain further insights into the performance of the different policies, we break down the long-run average cost into two components, the payment made to the carriers in the marketplace and the penalty incurred (or, payment made to third-party companies) due to the excess demand. The \emph{cost ratio}, \emph{payment ratio}, and \emph{penalty ratio} of a given policy $\pi\in \Pi$ are respectively defined as 
$$
\frac{\kappa^{\pi}}{\kappa^{\textsf{FA}}}, \text{ } \frac{\sum^T_{t= T_0+1}\sum_{(i,j)
		\in \mathcal{E}}P_{ijt}^{\pi}}{(T-T_0)\kappa^{\textsf{FA}}}, \text{ } \text{and } \frac{\kappa^{\pi}}{\kappa^{\textsf{FA}}} - \frac{\sum^T_{t= T_0+1}\sum_{(i,j)
		\in \mathcal{E}}P_{ijt}^{\pi}}{(T-T_0)\kappa^{\textsf{FA}}}.$$ 

Table \ref{Table: cost comparison for } summarizes the ratios between the total cost and the decomposed cost components relative to the long-run average cost of \textsf{FA} under policies \textsf{SP} and \textsf{HYB}. First, we observe that \textsf{SP} incurs a higher average total cost ratio. Second, we observe that \textsf{HYB} makes a higher average payment to the carriers in general. This is intuitive because \textsf{HYB} offers a higher payment than that under \textsf{SP} by the definition of the payment rule \eqref{def: HYB payment rule}. As for the average penalty, \textsf{SP} incurs a higher penalty cost due to unsatisfied demand. Recall that the reserve price $\boldsymbol{\xi}^*$ under \textsf{HYB} is higher than the posted price $\mathbf{p}^*$. Then intuitively, \textsf{HYB} accepts more carriers to transport loads, i.e., $ \Ex[Y^{\textsf{HYB}}_{ijt}] \ge \Ex[Y^{\textsf{SP}}_{ijt}]$, which leads to a lower average penalty cost than that under \textsf{SP}.

\begin{table}[!htb]
	\centering
    \rr{
	\begin{tabular}{c|cc|cc|cc}\toprule
		&\multicolumn{2}{c|}{Cost Ratio}&\multicolumn{2}{c|}{Payment Ratio}&\multicolumn{2}{c}{Penalty Ratio}\\\hline
		\text{Market Share}&\textsf{SP}	 & \textsf{HYB}& \textsf{SP} & \textsf{HYB}&\textsf{SP}& \textsf{HYB}\\
		0.1\%	&	1.37 	&	1.14 	&	0.63 	&	0.86 	&	0.75 	&	0.28 	\\
		0.5\%	&	1.24 	&	1.07 	&	0.75 	&	0.92 	&	0.49 	&	0.15 	\\
		1.0\%	&	1.20 	&	1.05 	&	0.80 	&	0.95 	&	0.39 	&	0.11 	\\
		5.0\%	&	1.10 	&	1.02 	&	0.90 	&	0.98 	&	0.20 	&	0.03 	\\
		\bottomrule
	\end{tabular}
    }
	\caption{\rr{Cost decomposition under \textsf{SP} and \textsf{HYB}.}}\label{Table: cost comparison for }
\end{table}

\subsection{Robustness Checks} \label{subsection: robustness checks}
In this section, we conduct additional numerical experiments to test the robustness of the insights obtained from our earlier results.  
Table \ref{Table: simulation results with b=10} summarizes how the performance of the different policies change with respect to the penalty cost parameter. In our simulation, we fix the platform's market share as $0.5\%$. All the other parameters remain the same as those in Section \ref{subsection: cost gap and AWT} except the penalty cost parameter. We vary the penalty cost parameter such that $b_{ij}/p_{ij} \in \{1.25,1.50,1.75,2.00\}$ for each O-D pair $(i,j) \in \mathcal{E}$, where $p_{ij}$ is an estimated shipping cost calculated from the data. From Table \ref{Table: simulation results with b=10}, we observe that the cost gap ratios of both policies increase as the penalty cost parameter becomes larger, with \textsf{SP} having a much more significant increase compared with \textsf{HYB}. Intuitively, the posted price mechanism has  a larger amount of excess demand and is therefore more affected by the change in the penalty cost parameter. 

\begin{table}[!htb]
	\centering
    \rr{
	\begin{tabular}{c|ccccccc}\toprule
		&\multicolumn{2}{c}{Cost Gap Ratio($\%$)}\\\hline
		$b_{ij}/p_{ij}$&\textsf{SP}	& \textsf{HYB} \\ 
		1.25  &6.18  &1.92 \\
		1.50  &12.30  &3.74 \\
		1.75  &18.23  &5.55 \\
		2.00  &24.39  &7.37 \\
		\bottomrule
	\end{tabular}
    }
	\caption{\rr{Impact of penalty cost parameter $\boldsymbol{b}$ on cost gap ratio.} }\label{Table: simulation results with b=10}
\end{table}

In addition to the penalty cost parameter, we have also conducted additional numerical experiments to investigate the impact of the probability that a carrier will stay in the marketplace after completing a load. The market share is fixed at $0.5\%$, and all the other parameters remain the same as those in Section \ref{subsection: cost gap and AWT} except the staying probability parameter. Note that \rr{$q_{ij}$} represents the probability that a carrier would stay in the marketplace after finishing a load delivery on lane $(i,j)$. In our numerical studies, we assume that \rr{$q_{ij} = q \in \{0, 0.2,0.4, 0.6\}$ for all $(i,j) \in \mathcal{E}$}. It is worth noticing that changing the value of this probability would affect the exogenous inflow rate of the carriers based on our parameter estimation process. Therefore, the results below measure the overall effect of the remaining probability, with the exogenous carriers' arrival rate taken into account. As shown in Table \ref{Table: simulation results with q}, the cost gap ratios \rr{in general increase} as $q$ increases, \rr{but the differences are small: the cost gap ratio increases by less than 2\% under \textsf{SP} and 0.3\% under \textsf{HYB} when the stay probability increases from 0\% to 60\%. This demonstrates that the platform's performance is robust to changes with respect to $q$ under both mechanisms. Intuitively, the platform's costs should decrease as $q$ increases, since a larger $q$ results in more carriers in the network, which helps to reduce penalties.
However, the network structure provides carriers with multiple options, increasing the likelihood that a carrier eventually transports a load, which may partially mitigate the benefits of a higher $q$. Additionally, a larger $q$ leads to a smaller exogenous inflow in our parameter estimation process, which can potentially reduce the number of carriers and increase penalties.}

\begin{table}[!htb]
	\centering
    \rr{
	\begin{tabular}{c|cccccc}\toprule
		&\multicolumn{2}{c}{Cost Gap Ratio($\%$)}\\\hline
		$q$&\textsf{SP}	&	\textsf{HYB}\\ 
		0.00&  24.17 	&	7.44 	\\
		0.20& 24.39	&	7.37 	\\
		0.40&25.15 	&	7.59 	\\
		0.60&25.84 	&	7.67 	\\
		\bottomrule
	\end{tabular}
    }
	\caption{\rr{Impact of staying probability $\mathbf{q}$ on cost gap ratio. }}\label{Table: simulation results with q}
\end{table}

\section{Concluding Remarks} 
\label{section: conclusion}

In this paper, we study a mechanism design problem for freight marketplaces. We consider a freight platform that serves as an intermediary between shippers and carriers in a truckload transportation network and aims to maximize its long-run average profit. \rr{We develop a two-stage framework to model carriers' load choice behavior. In the first stage, 
carriers select a lane using a multinomial logit (MNL) model based on the prices posted by the platform. In the second stage, carriers decide whether to book a load on the chosen lane by either accepting the posted price immediately or submitting a bid for the load if the platform offers an auction option.} 
We have proposed and analyzed \rr{two types of mechanisms: posted price mechanisms and a hybrid of auction mechanisms and posted price mechanisms.} We show that a static posted price mechanism based on fluid approximation is asymptotically optimal when the shipper demand and the carrier supply are both large. Furthermore, we study a hybrid mechanism, in which carriers can either book a load instantly by accepting the posted price offered by the platform or join \rr{a uniform price auction} to seek higher payments. We show that the hybrid mechanism can achieve a trade-off between platform profit and carrier waiting time, and is asymptotically optimal. We also provide tight bounds between the posted price mechanism and the hybrid mechanism as a function of the scaling factor.

There are several possible directions to extend our research. First, our model assumes that the lead time of each load is one period. That is, any load arriving at the beginning of a period will expire at the end of the period and cannot be carried over to the next period. 
It would be interesting to generalize our model and consider loads with heterogeneous, multi-period lead times (see discussion in Section \ref{section: model}). Second, our proposed static posted price mechanism uses a fixed price for each O-D pair. To improve the platform's profit, considering prices that dynamically change over time in response to the system states can be a good extension. Lastly, it would be more desirable that a platform can choose different types of mechanisms on each lane. The effect of this flexibility may offer more managerial insights in practice.

\bibliographystyle{apalike}
\bibliography{ref.bib}

\begin{thebibliography}{}

\bibitem[Af\'{e}che et~al., 2018]{Afeche2018}
Af\'{e}che, P., Liu, Z., and Maglaras, C. (2018).
\newblock Ride-hailing networks with strategic drivers: The impact of platform
  control capabilities on performance.
\newblock Working paper available at SSRN:
  \url{https://ssrn.com/abstract=3120544}.

\bibitem[ATA, 2023]{ATA2023}
ATA (2023).
\newblock Americal {T}rucking {A}ssociations: Economics and industry data.
\newblock \url{https://www.trucking.org/economics-and-industry-data/}.
\newblock [Online; accessed 6-February-2025].

\bibitem[Aveklouris et~al., 2024]{Aveklouris_etal_2021}
Aveklouris, A., DeValve, L., and Ward, A.~R. (2024).
\newblock Matching impatient and heterogeneous demand and supply.
\newblock \emph{Operations Research}, Published Online on 15 May 2024.

\bibitem[Balseiro et~al., 2019]{balseiro2019dynamic}
Balseiro, S.~R., Besbes, O., and Weintraub, G.~Y. (2019).
\newblock Dynamic mechanism design with budget-constrained buyers under limited
  commitment.
\newblock {\em Operations Research}, 67(3):711--730.

\bibitem[Balseiro et~al., 2021]{Balseiro_etal_2021}
Balseiro, S.~R., Brown, D.~B., and Chen, C. (2021).
\newblock Dynamic pricing of relocating resources in large networks.
\newblock {\em Management Science}, 67(7):4075--4094.

\bibitem[Banerjee et~al., 2022]{Banerjee_etal_2021}
Banerjee, S., Freund, D., and Lykouris, T. (2022).
\newblock Pricing and optimization in shared vehicle systems: An approximation
  framework.
\newblock {\em Operations Research}, 70(3):1783--1805.

\bibitem[Banerjee et~al., 2016]{Banerjee2016}
Banerjee, S., Johari, R., and Riquelme, C. (2016).
\newblock Dynamic pricing in ridesharing platforms.
\newblock {\em ACM SIGecom Exchanges}, 15(1):65–70.

\bibitem[Berry, 1941]{Berry_1941}
Berry, A.~C. (1941).
\newblock The accuracy of the {G}aussian approximation to the sum of
  independent variates.
\newblock {\em Transactions of the American Mathematical Society},
  49(1):122--136.

\bibitem[Besbes et~al., 2021]{Besbes_etal_MS_2021}
Besbes, O., Castro, F., and Lobel, I. (2021).
\newblock Surge pricing and its spatial supply response.
\newblock {\em Management Science}, 67(3):1350--1367.

\bibitem[Bimpikis et~al., 2019]{Bimpikis2019}
Bimpikis, K., Candogan, O., and Saban, D. (2019).
\newblock Spatial pricing in ride-sharing networks.
\newblock {\em Operations Research}, 67(3):744--769.

\bibitem[Board and Skrzypacz, 2016]{board2016revenue}
Board, S. and Skrzypacz, A. (2016).
\newblock Revenue management with forward-looking buyers.
\newblock {\em Journal of Political Economy}, 124(4):1046--1087.

\bibitem[Cachon et~al., 2017]{Cachon_etal_2017}
Cachon, G.~P., Daniels, K.~M., and Lobel, R. (2017).
\newblock The role of surge pricing on a service platform with self-scheduling
  capacity.
\newblock {\em Manufacturing \& Service Operations Management}, 19(3):368--384.

\bibitem[Caldentey and Vulcano, 2007]{caldentey2007online}
Caldentey, R. and Vulcano, G. (2007).
\newblock Online auction and list price revenue management.
\newblock {\em Management Science}, 53(5):795--813.

\bibitem[Cao et~al., 2024]{cao2020dynamic}
Cao, Y., Kleywegt, A., and Wang, H. (2024).
\newblock Dynamic pricing for two-sided marketplaces with offer expiration.
\newblock {\em Available at SSRN 3700227}.

\bibitem[Caplice, 2007]{caplice2007electronic}
Caplice, C. (2007).
\newblock Electronic markets for truckload transportation.
\newblock {\em Production and Operations Management}, 16(4):423--436.

\bibitem[Chawla et~al., 2010]{chawla2010multi}
Chawla, S., Hartline, J.~D., Malec, D.~L., and Sivan, B. (2010).
\newblock Multi-parameter mechanism design and sequential posted pricing.
\newblock In {\em Proceedings of the forty-second ACM symposium on Theory of
  computing}, pages 311--320.

\bibitem[Chen et~al., 2012]{ChenEtal2011}
Chen, L.~H., Goldstein, L., and Shao, Q.-M. (2012).
\newblock {\em Normal Approximation by {S}tein’s Method}, chapter
  {Berry–Esseen} Bounds for Independent Random Variables, pages 45--62.
\newblock Probability and Its Applications. Springer Berlin, Heidelberg.

\bibitem[Chen et~al., 2019]{chen2019efficacy}
Chen, Y., Farias, V.~F., and Trichakis, N. (2019).
\newblock On the efficacy of static prices for revenue management in the face
  of strategic customers.
\newblock {\em Management Science}, 65(12):5535--5555.

\bibitem[Chen and Hu, 2020]{Chen_and_Hu_2020}
Chen, Y. and Hu, M. (2020).
\newblock Pricing and matching with forward-looking buyers and sellers.
\newblock {\em Manufacturing \& Service Operations Management}, 22(4):717--734.

\bibitem[Cohen et~al., 2022]{cohen2022best}
Cohen, M.~C., D{\'e}sir, A., Korula, N., and Sivan, B. (2022).
\newblock Best of both worlds ad contracts: Guaranteed allocation and price
  with programmatic efficiency.
\newblock {\em Management Science}.

\bibitem[Convoy, 2023]{convoy2023}
Convoy (2023).
\newblock Introducing the next generation of bidding technology: instant bid
  responses.
\newblock
  \url{https://www.ajot.com/news/introducing-the-next-generation-of-bidding-technology-instant-bid-responses}.
\newblock [Online; accessed 10-February-2025].

\bibitem[Cooper, 2002]{cooper2002asymptotic}
Cooper, W.~L. (2002).
\newblock Asymptotic behavior of an allocation policy for revenue management.
\newblock {\em Operations Research}, 50(4):720--727.

\bibitem[{DAT}, 2022]{dat2022}
{DAT} (2022).
\newblock National flabbed rates.
\newblock
  \url{https://www.dat.com/industry-trends/trendlines/flatbed/national-rates/}.
\newblock [Online; accessed 21-february-2022].

\bibitem[D{\"u}tting et~al., 2019]{dutting2016revenue}
D{\"u}tting, P., Fischer, F., and Klimm, M. (2019).
\newblock Revenue gaps for static and dynamic posted pricing of homogeneous
  goods.
\newblock {\em arXiv preprint arXiv:1607.07105v3}.

\bibitem[Edelman et~al., 2007]{edelman2007internet}
Edelman, B., Ostrovsky, M., and Schwarz, M. (2007).
\newblock Internet advertising and the generalized second-price auction:
  Selling billions of dollars worth of keywords.
\newblock {\em American Economic Review}, 97(1):242--259.

\bibitem[Einav et~al., 2018]{einav2018auctions}
Einav, L., Farronato, C., Levin, J., and Sundaresan, N. (2018).
\newblock Auctions versus posted prices in online markets.
\newblock {\em Journal of Political Economy}, 126(1):178--215.

\bibitem[Etzion et~al., 2006]{Etzion_etal_2006}
Etzion, H., Pinker, E., and Seidmann, A. (2006).
\newblock Analyzing the simultaneous use of auctions and posted prices for
  online selling.
\newblock {\em Manufacturing \& Service Operations Management}, 8(1):68--91.

\bibitem[Feng et~al., 2021]{Feng_etal_2021}
Feng, G., Kong, G., and Wang, Z. (2021).
\newblock We are on the way: Analysis of on-demand ride-hailing systems.
\newblock {\em Manufacturing \& Service Operations Management},
  23(5):1237--1256.

\bibitem[Figliozzi et~al., 2003]{andres2003framework}
Figliozzi, M.~A., Mahmassani, H.~S., and Jaillet, P. (2003).
\newblock Framework for study of carrier strategies in auction-based
  transportation marketplace.
\newblock {\em Transportation Research Record}, 1854(1):162--170.

\bibitem[Figliozzi et~al., 2005]{figliozzi2005impacts}
Figliozzi, M.~A., Mahmassani, H.~S., and Jaillet, P. (2005).
\newblock Impacts of auction settings on the performance of truckload
  transportation marketplaces.
\newblock {\em Transportation Research Record}, 1906(1):89--96.

\bibitem[Foster, 1953]{foster1953stochastic}
Foster, F.~G. (1953).
\newblock On the stochastic matrices associated with certain queuing processes.
\newblock {\em Annals of Mathematical Statistics}, 24(3):355--360.

\bibitem[Gallego and Van~Ryzin, 1994]{gallego1994optimal}
Gallego, G. and Van~Ryzin, G. (1994).
\newblock Optimal dynamic pricing of inventories with stochastic demand over
  finite horizons.
\newblock {\em Management Science}, 40(8):999--1020.

\bibitem[Gallego and Van~Ryzin, 1997]{gallego1997multiproduct}
Gallego, G. and Van~Ryzin, G. (1997).
\newblock A multiproduct dynamic pricing problem and its applications to
  network yield management.
\newblock {\em Operations Research}, 45(1):24--41.

\bibitem[Godfrey and Powell, 2002]{godfrey2002adaptive}
Godfrey, G.~A. and Powell, W.~B. (2002).
\newblock An adaptive dynamic programming algorithm for dynamic fleet
  management, ii: Multiperiod travel times.
\newblock {\em Transportation Science}, 36(1):40--54.

\bibitem[Guda and Subramanian, 2019]{Guda_Subramanian_2019}
Guda, H. and Subramanian, U. (2019).
\newblock Your uber is arriving: Managing on-demand workers through surge
  pricing, forecast communication, and worker incentives.
\newblock {\em Management Science}, 65(5):1995--2014.

\bibitem[Hu and Zhou, 2022]{Hu_Zhou_2021}
Hu, M. and Zhou, Y. (2022).
\newblock Dynamic type matching.
\newblock {\em Manufacturing \& Service Operations Management}, 24(1):125--142.

\bibitem[Jin et~al., 2020]{jin2020tight}
Jin, Y., Lu, P., Tang, Z.~G., and Xiao, T. (2020).
\newblock Tight revenue gaps among simple mechanisms.
\newblock {\em SIAM Journal on Computing}, 49(5):927--958.

\bibitem[Liu and Van~Ryzin, 2008]{liu2008choice}
Liu, Q. and Van~Ryzin, G. (2008).
\newblock On the choice-based linear programming model for network revenue
  management.
\newblock {\em Manufacturing \& Service Operations Management}, 10(2):288--310.

\bibitem[Maglaras and Meissner, 2006]{maglaras2006dynamic}
Maglaras, C. and Meissner, J. (2006).
\newblock Dynamic pricing strategies for multiproduct revenue management
  problems.
\newblock {\em Manufacturing \& Service Operations Management}, 8(2):136--148.

\bibitem[McFadden, 1973]{McFadden73}
McFadden, D. (1973).
\newblock Conditional logit analysis of qualitative choice behaviour.
\newblock In Zarembka, P., editor, {\em Frontiers in Econometrics}, pages
  105--142. Academic Press New York, New York, NY, USA.

\bibitem[Myerson, 1981]{Myerson_1981}
Myerson, R.~B. (1981).
\newblock Optimal auction design.
\newblock {\em Mathematics of Operations Research}, 6(1):58--73.

\bibitem[Osadchiy and Vulcano, 2010]{osadchiy2010selling}
Osadchiy, N. and Vulcano, G. (2010).
\newblock Selling with binding reservations in the presence of strategic
  consumers.
\newblock {\em Management Science}, 56(12):2173--2190.

\bibitem[\"{O}zkan and Ward, 2020]{Ozkan_Ward_2020}
\"{O}zkan, E. and Ward, A.~R. (2020).
\newblock Dynamic matching for real-time ride sharing.
\newblock {\em Stochastic Systems}, 10(1):29--70.

\bibitem[Topaloglu and Powell, 2007]{topaloglu2007incorporating}
Topaloglu, H. and Powell, W. (2007).
\newblock Incorporating pricing decisions into the stochastic dynamic fleet
  management problem.
\newblock {\em Transportation Science}, 41(3):281--301.

\bibitem[{Uber Freight}, 2020]{uber2020}
{Uber Freight} (2020).
\newblock Uber freight announces in-app bidding for carriers.
\newblock \url{https://www.uber.com/blog/does-uber-freight-allow-bidding/}.
\newblock [Online; accessed 06-February-2025].

\bibitem[{United States Census Bureau}, 2017]{census2017}
{United States Census Bureau} (2017).
\newblock Geographic area series: Shipment characteristics by origin geography
  by destination geography by commodity by mode: 2017.
\newblock
  \url{https://data.census.gov/cedsci/table?q=cf1700a21&hidePreview=true&tid=CFSAREA2017.CF1700A21/}.
\newblock [Online; accessed 21-february-2022].

\bibitem[Varma et~al., 2020]{varma2020dynamic}
Varma, S.~M., Bumpensanti, P., Maguluri, S.~T., and Wang, H. (2020).
\newblock Dynamic pricing and matching for two-sided queues.
\newblock In {\em Abstracts of the 2020 SIGMETRICS/Performance Joint
  International Conference on Measurement and Modeling of Computer Systems},
  pages 105--106.

\bibitem[Vulcano et~al., 2002]{vulcano2002optimal}
Vulcano, G., Van~Ryzin, G., and Maglaras, C. (2002).
\newblock Optimal dynamic auctions for revenue management.
\newblock {\em Management Science}, 48(11):1388--1407.

\bibitem[Wang, 1993]{Wang_1993}
Wang, R. (1993).
\newblock Auctions versus posted-price selling.
\newblock {\em The American Economic Review}, 83(4):838--851.

\end{thebibliography}

\ECSwitch

\ECHead{Appendix}

\section{Notations} \label{appendix: notations}
\begin{table}[!htb]
	\begin{tabularx}{\textwidth}{l|X}\toprule
		$\mathcal{N}$ & The set of nodes (locations)\\
		$\mathcal{E}$ & The set of arcs (lanes)\\
		$\mathcal{S}_{it}$ & The set of carriers available to deliver loads at node $i$ in period $t$\\
		$\delta^+(j) $& $\{k \in \mathcal{N}:(j,k)\in \mathcal{E}\}$, the set of outbound nodes from node $j$\\ 
		$\delta^-(j)$ & $\{i \in \mathcal{N}: (i,j)\in \mathcal{E}\}$, the set of inbound nodes to node $j$\\ \hline
		${S}_{it}$ & Number of available carriers at node $i$ in period $t$\\
		$D_{ijt}$	& Number of loads that need to be shipped on lane $(i,j)$ in period $t$	\\
		$C^s_{ijt} $ & True opportunity cost of carrier $s$ transporting a load on lane $(i,j)$ in period $t$\\
		$X_{ijt}$	& Number of carriers who would choose to book loads on lane $(i,j)$ in period $t$\\
		$Y_{ijt}$	& Number of carriers who are awarded loads on lane $(i,j)$ in period $t$\\
		$Z_{ijt}$	& \rr{Number of carriers who decide to stay in the marketplace after completing a load shipment from node $i$ to node $j$ in period $t$. } \\
        $V_{it}$  & Number of carriers in $\mathcal{S}_{it}$ who leave the marketplace at the end of period $t$ \\
		$P_{ijt}$	& Total payment to carriers who transported loads on lane $(i,j)$ in period $t$\\
		$\Lambda_{it}$ & Number of exogenous arrival of carriers at node $i$ in period $t$\\\hline
		$\gamma^{\textsf{FA}}$ & Optimal objective value of \textsf{FA}\\
		$b_{ij}$	& Unit penalty cost for unsatisfied demand on lane $(i,j)$ \\
		$q_{ij}$	& 
        \rr{Probability that a carrier who just delivered a load on lane $(i,j)$ will remain in the marketplace} \\
		$\lambda_{i}$	& Exogenous arrival rate of carriers at node $i$ \\
		$d^*_{ij}$	& Optimal demand rate of loads on lane $(i,j)$ \\
		$r^*_{ij}$	& Optimal spot price for loads that need to be shipped on lane $(i,j)$ \\
		$\bar{\lambda}^*_{i}$ & Optimal total inflow of carriers at node $i$ \\
		$x^*_{ij}$	&Optimal probability that a carrier chooses to deliver a load on lane $(i,j)$ \\
		$\bar{y}^*_{ij}$	&Optimal flow of carriers who transported a load on lane $(i,j)$ \\
		$\bar{v}^*_{i}$	&Optimal flow of leaving carriers at node $i$ \\
		\hline
		$P^{s}_{ijt}(\mathbf{C}_{ijt})$ & Payment to carrier $s$ on lane $(i,j)$ in period $t$ with opportunity cost vector $\mathbf{C}_{ijt}$\\
		${A}_{ijt}^s(\mathbf{C}_{ijt})$ & Load allocation for carrier $s$ on lane $(i,j)$ in period $t$ with opportunity cost vector $\mathbf{C}_{ijt}$\\
		${\xi}^*_{ij}$	&Reserve price of \textsf{AUC} on lane $(i,j)$ \\
		$\psi_{ij}({C}_{ijt}^s)$ &Virtual cost of carrier $s$ on lane $(i,j)$ in period $t$ with opportunity cost ${C}_{ijt}^s$ \\
		$\rho_{ijt}$ & Auxiliary notation used in the proofs, see Eq \eqref{def:rho_it}\\
		$\mathcal{H}_{ijt}$ & Auxiliary notation used in the proofs, see Eq \eqref{def: H_it}\\
		$\mathcal{K}_{ijt}$ & Auxiliary notation used in the proofs, see Eq \eqref{eq: K_it objective}\\
		\bottomrule
	\end{tabularx}
	\caption{List of notations. }
\end{table}

\section{Numerical Experiments: Dataset and Model Calibration} \label{appendix: data}
In this section, we provide detailed information about our dataset and the parameter estimation procedure to calibrate our model. The freight data used in our numerical case studies come from two sources, Census Bureau and DAT Freight \& Analytics. The Census Bureau provides U.S. mode data on the website (\url{https://data.census.gov/cedsci/}). By using the keyword \emph{CFSAREA2017.CF1700A20} in the search box, we obtain a table ``Geographic Area Series: Shipment Characteristics by Origin Geography by Destination Geography: 2017", which includes information on annual shipment volumes and average mile per shipment per each geographical origin-destination pair. Our second data source is DAT Freight \& Analytics, which provides national flatbed rates on their website (\url{https://www.dat.com/industry-trends/trendlines/flatbed/national-rates}). In our numerical studies, we accessed the website and retrieved the average outbound flatbed rates of five regions in the U.S. on February 28, 2022: West (\$3.10), West South (\$2.78), Mid West (\$3.46), South East (\$3.03), and North East (\$2.94) .

In our numerical studies, we assume that the true opportunity cost of a carrier follows \rr{a logistic distribution (which is due to the Gumbel error terms in the utility) with mean $\alpha_{ij}/\beta$. We set the mean opportunity cost $\alpha_{ij}/\beta$ equal to $p_{ij}$}, where $p_{ij}$ represents the average freight shipping cost on lane $(i,j)$. To calculate this average shipping cost, we utilize the data by DAT Freight \& Analytics, which provides the average regional flatbed rates per mile of five regions within the U.S \rr{(i.e., \$3.10, \$2.78, \$3.46, \$3.03, and \$2.94 as mentioned earlier)}. We term this average regional flatbed rate as the \emph{normal freight rate}. We assume that 90\% of daily demand is satisfied by normal freight rates; and the rest of the demand that cannot be served by the carriers in the marketplace is fulfilled by penalty shipping rates (e.g., third-party carriers' shipping costs) which are assumed two times more expensive than the normal freight rates. Then the average freight shipping on lane $(i,j)$ can be computed as follows:
\[
p_{ij} = \text{(Origin $i$ rate + Destination $j$ rate)/2} \times \text{Average miles}/ (2 \times 0.1+0.9),
\]
where the origin and destination rates are based on the DAT regional flatbed rate data. 

We assume that the probability that a carrier will remain in the marketplace after transporting a load is $0.2$. Finally, in our case study, we assume that the fraction of carriers that would choose to ship a load on the platform is 0.5, and the carriers' external arrival rate $\lambda_{i}$ to the marketplace at each node $i$ can be obtained by solving the flow balance Equation \eqref{FA linking constraint}.

\section{An Example of Uniform Price Auctions} \label{appendix: uniform price auction example}
\rr{Here we provide an example to illustrate how the allocation decisions obtained from optimization problem \eqref{obj: decomposed auction allocation problem} and the payment rule in \eqref{M-VCG: payment rule} characterize a uniform price auction. Consider a lane with 5 carriers competing for 3 loads, and the reserve price is equal to $\xi$. Let $C^{(1)}< C^{(2)}<\cdots< C^{(5)}$ denote the carriers' true opportunity costs. When $C^{(3)} \le \xi$, an optimal solution to problem \eqref{obj: decomposed auction allocation problem} is given by $A^{(i)*}=1$ for $i \in \{1,2,3\}$, $A^{(i)*}=0$ for $i \in \{4,5\}$, $Y^{0*}=0$, and $\mathcal{J}(\mathbf{C})=C^{(1)}+C^{(2)}+C^{(3)}$. This is exactly the allocation outcome in a uniform price auction, where the lowest three bids below the reserve price win a load. Then we consider the payment rule given by Eq \eqref{M-VCG: payment rule}. Consider a carrier who wins a load, for example, carrier 2. It is easy to see that $\mathcal{J}(\mathbf{C}^{-(2)})=C^{(1)}+C^{(3)}+ \min (C^{(4)}, \xi)$.
By Eq \eqref{M-VCG: payment rule}, the payment to carrier 2 is $P^{(2)}=C^{(2)}+(C^{(1)}+C^{(3)}+ \min (C^{(4)}, \xi))-(C^{(1)}+C^{(2)}+C^{(3)})=\min (C^{(4)}, \xi)$, which is exactly the uniform price auction payment --- winning carriers are paid the lower of the $(D+1)^{\mbox{\scriptsize th}}$ lowest bid and the reserve price. On the other hand, for carrier $i \in \{4,5\}$ who loses the auction and does not receive a load allocation, it is clear that $\mathcal{J}(\mathbf{C}^{-(i)})=\mathcal{J}(\mathbf{C})=C^{(1)}+C^{(2)}+C^{(3)}$ and $A^{(i)*}=0$, and the payment to carrier $i$ is $P^{(i)}=0$, which is exactly the outcome under a uniform price auction. When $C^{(3)} > \xi \ge C^{(2)}$, it is easy to check that $A^{(1)*}= A^{(2)*} = Y^{0*}= 1$, $A^{(3)*} = A^{(4)*} = A^{(5)*} = 0$, and $\mathcal{J}(\mathbf{C})=C^{(1)}+C^{(2)}+ \xi$. For carrier $i$ who wins a load, say carrier 2,  his payment is $P^{(2)}=C^{(2)}+(C^{(1)} + 2 \xi)-(C^{(1)}+C^{(2)}+\xi)=\xi$. For  carrier $i \in \{3,4,5\}$ who loses the auction, it's clear that $P^{(i)}=0$. The other remaining cases $C^{(2)} > \xi \ge C^{(1)}$ and $C^{(1)} > \xi$ can be analyzed in a similar vein. Therefore, the allocations and payments from \eqref{obj: decomposed auction allocation problem} and \eqref{M-VCG: payment rule} are indeed consistent with the outcome of a uniform price auction.}

\section{Proofs}  \label{appendix: proof}
\subsection*{Proofs in Section~\ref{section: model}}

\begin{proof}{\rr{Proof of Lemma \ref{Lemma: virtual function properties}.}}

    Consider a carrier $s$ who has chosen lane $(i,j)$ in the first stage. Then we have
		\begin{align*}
			& U^s_{ijt} > U^s_{ikt}, \quad \forall k \neq j \\
			\Leftrightarrow \quad  & \beta p_{ijt} - \alpha_{ij} + \epsilon^s_{ijt} > \beta p_{ikt} - \alpha_{ik} + \epsilon^s_{ikt}, \quad \forall k \neq j \\
			\Leftrightarrow \quad  & C^s_{ijt} < (p_{ijt}-p_{ikt}) + C^s_{ikt}, \quad \forall k \neq j\\
			\Leftrightarrow \quad & \epsilon_{ikt}^s<\beta p_{ijt} -\beta p_{ikt} + \alpha_{ik}- \alpha_{ij} + \epsilon^s_{ijt},\quad \forall k\neq j.
		\end{align*}
		Let ${F}_{ij}(\cdot)$ and ${f}_{ij}(\cdot)$ denote the posterior cumulative distribution function and the posterior probability density function of $C^s_{ijt}$ given the condition that carrier $s$ has chosen lane $(i,j)$. For notation simplicity, we will omit the superscript $s$. It then follows that
		\begin{align}
			{F}_{ij}(c) 
			& = \Pr \left(C_{ijt} \le c  \mid C_{ijt} < (p_{ijt}-p_{ikt}) + C_{ikt}, \, \forall k \neq j\right) \nonumber \\
			& = \frac{\Pr \left(C_{ijt} \le c,  C_{ijt} < (p_{ijt}-p_{ikt}) + C_{ikt}, \, \forall k \neq j \right)}{\Pr \left(C_{ijt} < (p_{ijt}-p_{ikt}) + C_{ikt}, \, \forall k \neq j\right)}  \nonumber\\
			& = \frac{\Pr \left(\epsilon_{i0t} \le \beta c-\alpha_{ij}+\epsilon_{ijt},  \epsilon_{ikt}<\beta p_{ijt} -\beta p_{ikt} + \alpha_{ik}- \alpha_{ij} + \epsilon_{ijt}, \, \forall k \neq j \right)}
			{\Pr \left( \epsilon_{ikt}<\beta p_{ijt} -\beta p_{ikt} + \alpha_{ik}- \alpha_{ij} + \epsilon_{ijt}, \, \forall k \neq j \right)}  \nonumber\\
			& = \frac{\int_{-\infty}^{\infty} e^{-e^{-(\beta c-\alpha_{ij}+\epsilon_{ijt})}}\prod_{k \neq j} e^{-e^{-(\beta p_{ijt} -\beta p_{ikt} + \alpha_{ik}- \alpha_{ij} + \epsilon_{ijt})}} \cdot e^{-\epsilon_{ijt}}e^{-e^{-\epsilon_{ijt}}} \,d \epsilon_{ijt}}
			{\int_{-\infty}^{\infty} \prod_{k \neq j} e^{-e^{-(\beta p_{ijt} -\beta p_{ikt} + \alpha_{ik}- \alpha_{ij} + \epsilon_{ijt})}} \cdot e^{-\epsilon_{ijt}}e^{-e^{-\epsilon_{ijt}}} \,d \epsilon_{ijt}}  \label{eq: eps Gumbel}\\
			& = \frac{\int_{-\infty}^{\infty} e^{-e^{-\epsilon_{ijt}} \left[1+e^{-(\beta c-\alpha_{ij})}+\sum_{k \neq j}e^{-(\beta p_{ijt} -\beta p_{ikt} + \alpha_{ik}- \alpha_{ij} )}\right]} \cdot e^{-\epsilon_{ijt}}\,d \epsilon_{ijt}}
			{\int_{-\infty}^{\infty} e^{-e^{-\epsilon_{ijt}} \left[1+\sum_{k \neq j}e^{-(\beta p_{ijt} -\beta p_{ikt} + \alpha_{ik}- \alpha_{ij} )}\right]} \cdot e^{-\epsilon_{ijt}}\,d \epsilon_{ijt}}  \nonumber\\
			& = \frac{\int_{0}^{\infty} e^{-u \left[1+e^{-(\beta c-\alpha_{ij})}+\sum_{k \neq j}e^{-(\beta p_{ijt} -\beta p_{ikt} + \alpha_{ik}- \alpha_{ij} )}\right]} \,d u}
			{\int_{0}^{\infty} e^{-u \left[1+\sum_{k \neq j}e^{-(\beta p_{ijt} -\beta p_{ikt} + \alpha_{ik}- \alpha_{ij} )}\right]} \,d u}\quad (\text{let }u=e^{-\epsilon_{ijt}})  \nonumber\\
			& = \frac{1+\sum_{k \neq j}e^{-(\beta p_{ijt} -\beta p_{ikt} + \alpha_{ik}- \alpha_{ij} )}}{1+e^{-(\beta c-\alpha_{ij})}+\sum_{k \neq j}e^{-(\beta p_{ijt} -\beta p_{ikt} + \alpha_{ik}- \alpha_{ij} )}}  \quad (\text{by }\int_{0}^{\infty} e^{-u A} \,d u = \frac{1}{A}) \nonumber\\
			& = \frac{\sum_{k\in \delta^+(i)} e^{\beta p_{ikt} -\alpha_{ik}}}{e^{\beta(p_{ijt}-c)}+\sum_{k\in \delta^+(i)} e^{\beta p_{ikt} -\alpha_{ik}}} \label{eq: posterior independence}
		\end{align}
		where \eqref{eq: eps Gumbel} follows from the assumption that $\epsilon_{ikt}$ and $\epsilon_{i0t}$ follow i.i.d Gumbel distribution for all $k \in \delta^+(i)$. For the posterior probability density function, we have
		\begin{equation} \label{eq:posterior density}
			{f}_{ij}(c) =F'_{ij}(c)= \frac{\beta e^{\beta(p_{ijt}-c)} \sum_{k\in \delta^+(i)} e^{\beta p_{ikt} -\alpha_{ik}}}{\left(e^{\beta(p_{ijt}-c)}+\sum_{k\in \delta^+(i)} e^{\beta p_{ikt} -\alpha_{ik}}\right)^2}
		\end{equation}
Substituting \eqref{eq: posterior independence} and \eqref{eq:posterior density} into \eqref{eq:psi-def}, we have
		\begin{equation} \label{eq: psi-result}
			{\psi}_{ij} (C_{ijt}) = C_{ijt} + \frac{1}{\beta}+\frac{1}{\beta}e^{\beta (C_{ijt}-p_{ijt})}\sum_{k\in\delta^+(i)}{e^{\beta p_{ikt}-\alpha_{ik}}}.
		\end{equation}
		(\ref{eq: psi-result}) implies that $\psi_{ij}$ is a continuous and strictly increasing function. Therefore the inverse function of $\psi_{ij}$ exists.
	\Halmos \end{proof}

\vspace{0.1in}

\begin{proof}{Proof of Proposition \ref{Prop: stationary}.}
	We prove Proposition \ref{Prop: stationary} by showing the stability of a modified system whose expected total number of carriers is greater than or equal to that of the original system. We use the Foster's theorem \citep{foster1953stochastic} to prove the stability of the new system, and we show that the existence of the stationary distribution of the new system implies the stability of the original system. Notice that it suffices to consider the stability of the carrier side since the new shipper demand $\mathbf{D}_{t}$ follows a Poisson distribution with known rate and all the loads are served either by the carriers in the marketplace or the third-party.

	Consider a modified model in which a carrier stays in the platform with probability $\hat{q}$ after completing a load shipment in each period, where $\hat{q} := \max_{(i,j)\in\mathcal{E}}{q_{ij}}$, regardless of whether the carrier hauls a load. Let $\hat{S_t}$ be the total number of carriers in the modified system at the beginning of period $t$. \rr{Let $\hat{\lambda} = \sum_i \lambda_i$ be the total arrival rate of carriers to the marketplace in each period.} The total arrival of carriers in period $t$, $\hat{\Lambda}_t$, follows a Poisson distribution with parameter $\hat{\lambda}$. The number of remaining carriers at the end of period $t$, $\hat{Z}_t$, follows a Binomial distribution with parameters $\hat{q}$ and $\hat{S}_t$. To explicitly show the dependence of $\hat{Z}_t$ on the parameter $\hat{q}$, we write $\hat{Z}_t$ as $\hat{Z}_t(\hat{q}|\hat{S_t})$ whenever necessary. Then in the modified model, the carriers available in the next period are 
	\[
	\hat{S}_{t+1}=\hat{Z}_t(\hat{q}|\hat{S_t})+\hat{\Lambda}_{t+1}.
	\]
	The one-step transition probability from state $l$ to state $k$ is given by
	\[\Pr(\hat{S}_{t+1}=k|\hat{S}_{t}=l) = \sum_{m=0}^{\min\{l,k\}}\binom{l}{m}\hat{q}^m(1-\hat{q})^{l-m}e^{-\hat{\lambda}} \frac{\hat{\lambda}^{k-m}}{(k-m)!}.\]
	Clearly, $\Ex[\sum_{i\in \mathcal{N}}S_{it}]\le \Ex[\hat{S}_{t}]$ if $\Ex[\hat{S}_{t}]$ exists, where $S_{it}$ is the number of carriers at node $i$ in period $t$ in the original model.
	Therefore, to prove Proposition \ref{Prop: stationary}, it suffices to show the existence of $\Ex[\hat{S}_{t}]$.
	
	We use Foster's Theorem to show that a stationary distribution exists in the modified system. Let $\mathbb{Z}^+$ denote the set of non-negative integers. Define a Lyapunov function $\Phi$ as 
	\[\Phi(\hat{S}_t):= \hat{S}_t.\]
	By the definition of $\hat{S}_t$, we assume $\Phi(\hat{S}_t)\geq 0$ without loss of generality. By Foster's Theorem, the Markov chain of the modified system is positive recurrent if the Lyapunov function $\Phi$ satisfies
	\[\sum_{k=0}^{\infty} \Pr(\hat{S}_{t+1}=k|\hat{S}_{t}=l)\Phi(k)<\infty, \quad \forall l\in F,\]
	\[\sum_{k=0}^{\infty} \Pr(\hat{S}_{t+1}=k|\hat{S}_{t}=l)\Phi(k) \leq \Phi(l) -\epsilon, \quad \forall l\notin F,\]    
	for some finite set $F$ and $\epsilon>0$. 
	
	We first show that $\sum_{k=0}^{\infty} \Pr(\hat{S}_{t+1}=k|\hat{S}_{t}=l)\Phi(k)< \infty$ for any state $l\in \mathbb{Z}^+$. We have
	\begin{align*}
		\sum_{k=0}^{\infty} \Pr(\hat{S}_{t+1}=k|\hat{S}_{t}=l)\Phi(k) &= \sum_{k=0}^{\infty} \Pr(\hat{S}_{t+1}=k|\hat{S}_{t}=l)k\\
		&= \sum_{k=0}^{l} \Pr(\hat{S}_{t+1}=k|\hat{S}_{t}=l)k  + \sum_{k=l+1}^{\infty} \Pr(\hat{S}_{t+1}=k|\hat{S}_{t}=l) k.
	\end{align*}
	It is clear that $\sum_{k=0}^{l} \Pr(\hat{S}_{t+1}=k|\hat{S}_{t}=l)k$ is bounded. Consider the second term in the above equation:
	\begin{align*}
		\sum_{k=l+1}^{\infty} \Pr(\hat{S}_{t+1}=k|\hat{S}_{t}=l) k &= \sum_{k=l+1}^{\infty}\sum_{m=0}^l \binom{l}{m}\hat{q}^m(1-\hat{q})^{l-m}e^{-\hat{\lambda}}\frac{\hat{\lambda}^{k-m}}{(k-m)!}k\\&= \sum_{k=l+1}^{\infty}\sum_{m=0}^l \binom{l}{m}\hat{q}^m(1-\hat{q})^{l-m}e^{-\hat{\lambda}}\frac{\hat{\lambda}^{k-m}}{(k-m)!}(k-m+m)\\
		&        =    \sum_{m=0}^l \binom{l}{m}\hat{q}^m(1-\hat{q})^{l-m}\sum_{k=l+1}^{\infty}e^{-\hat{\lambda}}\frac{\hat{\lambda}^{k-m}}{(k-m)!}(k-m + m).
	\end{align*}
	We can get an upper bound of the above inner summation term as follows:
	\begin{align*}
		\sum_{k=l+1}^{\infty}e^{-\hat{\lambda}}\frac{\hat{\lambda}^{k-m}}{(k-m)!}(k-m + m)&\le \sum_{k=m}^{\infty}e^{-\hat{\lambda}}\frac{\hat{\lambda}^{k-m}}{(k-m)!}(k-m + m)\\
		&=\sum_{k=m}^{\infty}e^{-\hat{\lambda}}\frac{\hat{\lambda}^{k-m}}{(k-m)!}(k-m) +\sum_{k=m}^{\infty}e^{-\hat{\lambda}}\frac{\hat{\lambda}^{k-m}}{(k-m)!} m\\
		&\le\hat{\lambda}+m.
	\end{align*}
	Then, we have
	\begin{align*}
		\sum_{k=l+1}^{\infty} \Pr(\hat{S}_{t+1}=k|\hat{S}_{t}=l) k     
		&    \le    \sum_{m=0}^l \binom{l}{m}\hat{q}^m(1-\hat{q})^{l-m}(\hat{\lambda} + m)\\ &\le {\hat{\lambda}+l\hat{q}(1-\hat{q})}.
	\end{align*}
	It then immediately follows that $\sum_{k=0}^{\infty} \Pr(\hat{S}_{t+1}=k|\hat{S}_{t}=l)\Phi(k)<\infty$ for all $l\in \mathbb{Z}^+$. 
	
	We next show that there exist a finite set $F^*$ and a real positive number $\epsilon^*$ that satisfy the second condition of the Foster's theorem:
	\[
	\sum_{k=0}^{\infty} \Pr(\hat{S}_{t+1}=k|\hat{S}_{t}=l)\Phi(k) \leq \Phi(l) -\epsilon^*, \quad \forall l\notin F^*.
	\]    
	We have
	\begin{align*}
		&\quad \sum_{k=0}^{\infty} \Pr(\hat{S}_{t+1}=k|\hat{S}_{t}=l)\Phi(k) - \Phi(l)\\
		&= \sum_{k=0}^{\infty}  \Pr(\hat{S}_{t+1}=k|\hat{S}_{t}=l)k - l \\&= \sum_{k=0}^{\infty} \Pr(\hat{S}_{t+1}=k|\hat{S}_{t}=l) (k- l)\\
		&= \sum_{k=0}^{l} \Pr(\hat{S}_{t+1}=k|\hat{S}_{t}=l) (k- l)  + \sum_{k=l+1}^{\infty} \Pr(\hat{S}_{t+1}=k|\hat{S}_{t}=l) (k- l).
	\end{align*}
	Then, the condition can be expressed as:
	\[
	\sum_{k=0}^{l} \Pr(\hat{S}_{t+1}=k|\hat{S}_{t}=l) (k- l)  + \sum_{k=l+1}^{\infty} \Pr(\hat{S}_{t+1}=k|\hat{S}_{t}=l) (k- l)\le -\epsilon^*, \quad \forall l\notin F^*.
	\]
	In what follows, we will show the existence of $\epsilon^*$ and $F^*$ that satisfy the following two inequalities: 
	\begin{align}
		& \sum_{k=0}^{l} \Pr(\hat{S}_{t+1}=k|\hat{S}_{t}=l) (k - l) < -\hat{\lambda} - \epsilon^*, \quad \forall l\notin F^*, \label{ineq_1: exist eps F} \\
		& \sum_{k=l+1}^{\infty} \Pr(\hat{S}_{t+1}=k|\hat{S}_{t}=l) (k-l)\le \hat{\lambda},\quad \forall l\notin F^*. \label{ineq_2: exist eps F}
	\end{align}

	\noindent To show \eqref{ineq_1: exist eps F}, we first show 
	\begin{equation}
		\sum_{k=0}^{l} \Pr(\hat{S}_{t+1}=k|\hat{S}_{t}=l) k\le {\hat{\lambda}+l\hat{q}(1-\hat{q})} \tag{*}  \label{ineq_1: exist eps F_condition a}
	\end{equation}
	for any $l\in \mathbb{Z}^+$, and then show the existence of $\epsilon^*>0$ and $F^*$ such that 
	\begin{equation}
		\sum_{k=0}^{l} \Pr(\hat{S}_{t+1}=k|\hat{S}_{t}=l) l > l\hat{q}(1-\hat{q}) + 2\hat{\lambda} +\epsilon^*, \quad \forall l\notin F^*. \tag{**}  \label{ineq_1: exist eps F_condition b}
	\end{equation}
	Combining the above two inequality immediately leads to \eqref{ineq_1: exist eps F}.

	\noindent We first show \eqref{ineq_1: exist eps F_condition a}. We have
	\begin{align*}
		\sum_{k=0}^{l} \Pr(\hat{S}_{t+1}=k|\hat{S}_{t}=l) k &= \sum_{k=0}^{l}\sum_{m=0}^k \binom{k}{m}\hat{q}^m(1-\hat{q})^{k-m}e^{-\hat{\lambda}}\frac{\hat{\lambda}^{k-m}}{(k-m)!}k\\&= \sum_{k=0}^{l}\sum_{m=0}^k \binom{k}{m}\hat{q}^m(1-\hat{q})^{k-m}e^{-\hat{\lambda}}\frac{\hat{\lambda}^{k-m}}{(k-m)!}(k-m+m)\\
		&    =    \sum_{m=0}^k \binom{k}{m}\hat{q}^m(1-\hat{q})^{k-m}\sum_{k=0}^{l}e^{-\hat{\lambda}}\frac{\hat{\lambda}^{k-m}}{(k-m)!}(k-m + m)\\
		&    \le    \sum_{m=0}^k \binom{k}{m}\hat{q}^m(1-\hat{q})^{k-m}(\hat{\lambda} + m)\\ &\le {\hat{\lambda}+l\hat{q}(1-\hat{q})}.
	\end{align*}
	
	\noindent We next show \eqref{ineq_1: exist eps F_condition b}. Consider any $k, l\in \mathbb{Z}^+$ such that $l>k$. By the probability mass function of the Binomial distribution, we have
	\[
	\Pr(Z_t(\hat{q}|l) = l-k) = \binom{l}{l-k}\hat{q}^{l-k}(1-\hat{q})^{k}.\] 
	It then follows that
	\[\lim_{l\rightarrow \infty}\frac{\Pr(Z_t(\hat{q}|l) = l-k)}{\Pr(Z_t(\hat{q}|l+1) = l+1-k)} = \lim_{l\rightarrow \infty}\frac{l-k+1}{(l+1)\hat{q}} =\frac{1}{\hat{q}},
	\] 
	which implies that $\lim_{l \rightarrow \infty}\Pr(Z_t(\hat{q}|l)= l-k) = 0$. Then, for any $\epsilon>0$ and $k$, there exists some $N\in \mathbb{Z}^+$ such that 
	\[
	\Pr(Z_t(\hat{q}|l) = l-k) <\epsilon, \quad \forall l>N.
	\]
	
	Consider some $\epsilon^*$ such that $0<\epsilon^*<1-\hat{q}+\hat{q}^2$.
	As $\hat{\Lambda}_t$ follows a Poisson distribution, there exists a $\lambda' \in \mathbb{Z}^+$ such that $\Pr(\hat{\Lambda}_t > \lambda')<\epsilon^*/2$. Given $\epsilon^*>0$, consider $N_k\in \mathbb{Z}^+$ such that $\Pr(Z_t(\hat{q}|l) = l-k)  <\epsilon^*/(2\lambda')$ for all $l>N_k.$ Let $N' :=\max_{k\le \lambda'}\{N_k\}$. We have
	\[
	\Pr(Z_t(\hat{q}|l) > l-\lambda') = \sum_{k=0}^{\lambda'-1} \Pr(Z_t(\hat{q}|l) = l-k)  <\epsilon^*/2, \quad \forall l>N'.
	\]
	Combining the two inequalities $\Pr(\hat{\Lambda}_{t+1} > \lambda')<\epsilon^*/2$ and $\Pr(Z_t(\hat{q}|l) > l-\lambda') <\epsilon^*/2$, we have
	\[
	\Pr(Z_t(\hat{q}|l) > l-\lambda' \text{ or } \hat{\Lambda}_{t+1} > \lambda') <\epsilon^*, \quad \forall l>N'.
	\]
	Recall that $\hat{S}_{t+1}=\hat{Z}_t(\hat{q}|\hat{S_t})+\hat{\Lambda}_{t+1}$. If $\hat{S}_{t+1}>\hat{S}_{t}$ and $\hat{\Lambda}_{t+1} \le \lambda'$, then we have $\hat{Z}_t(\hat{q}|\hat{S_t})>\hat{S_t}-\hat{\Lambda}_{t+1}\ge \hat{S_t}-\lambda'$, which leads to $
	\Pr(\hat{S}_{t+1}>\hat{S}_{t})\le \Pr(\hat{Z}_t(\hat{q}|\hat{S_t})>\hat{S_t}-\lambda' \text{ or }\hat{\Lambda}_{t+1} > \lambda')$. It then implies that  
	\[
	\sum_{k=l+1}^{\infty} \Pr(\hat{S}_{t+1}=k|\hat{S}_{t}=l)< \epsilon^*, \quad \forall l>N'.
	\] 
	Or equivalently,
	\[
	\sum_{k=0}^{ l} \Pr(\hat{S}_{t+1}=k|\hat{S}_{t}=l)\ge 1 - \epsilon^*, \quad \forall l>N'.
	\]
	Let $N^* := \max\{\ceil{(2\hat{\lambda}+\epsilon^*)/(1-\hat{q}+\hat{q}^2-\epsilon^*)},N'\}$. We have
	\[
	\sum_{k=0}^{l} \Pr(\hat{S}_{t+1}=k|\hat{S}_{t}=l) l \ge (1-\epsilon^*)l > l\hat{q}(1-\hat{q}) + 2\hat{\lambda} +\epsilon^*, \quad \forall l>N^*.
	\] 
	Let $F^* := \{l\in \mathbb{Z}^+: l \le N^*\}$ and recall that $\sum_{k=0}^{l} \Pr(\hat{S}_{t+1}=k|\hat{S}_{t}=l) k \le {\hat{\lambda}+l\hat{q}(1-\hat{q})}$ for any $l$. 
	It then leads to 
	\[
	\sum_{k=0}^{l} \Pr(\hat{S}_{t+1}=k|\hat{S}_{t}=l) (k - l) < -\hat{\lambda} - \epsilon^*, \quad \forall l\notin F^*.
	\]
	
	\noindent Finally, it remains to show \eqref{ineq_2: exist eps F}. We have
	\begin{align*}
		\sum_{k=l+1}^{\infty} \Pr(\hat{S}_{t+1}=k|\hat{S}_{t}=l) (k-l) &= \sum_{k=l+1}^{\infty}\sum_{m=0}^l \binom{l}{m}\hat{q}^m(1-\hat{q})^{l-m}e^{-\hat{\lambda}}\frac{\hat{\lambda}^{k-m}}{(k-m)!}(k-l)\\
		&= \sum_{m=0}^l \binom{l}{m}\hat{q}^m(1-\hat{q})^{l-m}\sum_{k=l+1}^{\infty} e^{-\hat{\lambda}}\frac{\hat{\lambda}^{k-m}}{(k-m)!}(k-m+m-l)\\
		& \le    \sum_{m=0}^l \binom{l}{m}\hat{q}^m(1-\hat{q})^{l-m}\sum_{k=l+1}^{\infty}e^{-\hat{\lambda}}\frac{\hat{\lambda}^{k-m}}{(k-m)!}(k-m)\\
		& \le \sum_{m=0}^l \binom{l}{m}\hat{q}^m(1-\hat{q})^{l-m}{\hat{\lambda}}\\
		&= {\hat{\lambda}}.
	\end{align*}
	Therefore, we have $\sum_{k=0}^{\infty} \Pr(\hat{S}_{t+1}=k|\hat{S}_{t}=l) (k- l)< -\epsilon^*$ for all $l\notin F^*$ where $F^* := \{l\in \mathbb{Z}^+: l\le N^*\}$. 
	
	The existence of $F^*$ and $\epsilon^*$ that satisfy the two conditions for the Foster's theorem implies that the modified system is stable. By definition, the expected total number of carriers in the modified system is an upper bound of that in the original system. Therefore, the expected total number of carriers in our original is bounded, which completes the proof.
	\Halmos \end{proof}

\vspace{0.1in}

\begin{proof}{Proof of Proposition \ref{prop:p = a and psi}.}
	Since our analysis does not depend on the specific lane, and with the assumption of symmetric carriers, we will drop the node and time indices $i$, $j$, $t$, and the carrier index $s$ for notation simplicity whenever the context is clear.
	
	In a state ($\mathbf{S},\mathbf{D})$ of an IC and IR mechanism, consider the net utility ${u}(C)$ of a carrier with true opportunity costs $C\in \mathbb{R}$. By the IC constraint, we have
	$$
	u(C) = p (C) - {a}( C) C \ge p ({c}) - {a}( {c}) C = u( {c}) - {a}( {c}) (C - {c})
	$$
	for any ${c}\in \mathbb{R}$. This implies that $u(C)$ is a convex function with gradient $- {a}( C)$. Recall that ${p}({C}) = {u}({C})+{a}({C}) {C} \text{ and }\boldsymbol{\psi}({C}) = {C}+ \frac{F({C})}{f({C})}$. Note that $\lim_{C\rightarrow \infty}a(C)=0$ because the platform should reject a carrier with opportunity cost  $\infty$. Then, the expected payment to the carrier is
	\begin{align*}
		\hat{\Ex}[{p}({C})] &= \hat{\Ex}[{u}({C})+{a}({C}) {C}]\\
		&=\int_{0}^{\infty}\int_{C}^{\infty}a(\zeta)f(C)d\zeta dC+\int_{0}^{\infty}a(C)Cf(C)dC\\
		&=\int_{0}^{\infty}\int_{0}^{\zeta}a(\zeta)f(C)dC d\zeta +\int_{0}^{\infty}a(C)Cf(C)dC\\
		&=\int_{0}^{\infty}a(\zeta)F(\zeta)d\zeta+\int_{0}^{\infty}a(C)Cf(C)dC\\
		&=\int_{0}^{\infty}a(C)\left(C + \frac{F(C)}{f(C)}\right)f(C)dC \\
		&=\hat{\Ex}\left[{a}({C}) \left( {C} + \frac{F({C})}{f({C})} \right) \right]\\
		&=\hat{\Ex}[{a}({C}) \boldsymbol{\psi}({C})],
	\end{align*}
	which completes the proof. 
	\Halmos 
\end{proof}

\subsection*{Proofs in Section~\ref{section: Fluid Approx}}

We first show an auxiliary result below.
\begin{lemma}\label{lemma: necessary conditions for any stationary mechanism}
	Constraints 
	\eqref{flow_FA}-\eqref{noneneagetive_FA} in \textsf{FA} are necessary conditions for any mechanism under which the system is stable. 
\end{lemma}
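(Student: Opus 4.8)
The plan is to fix an arbitrary stationary policy $\pi\in\Pi$ under which the system is stable, pass to the stationary distribution guaranteed by Proposition~\ref{Prop: stationary}, and exhibit the vector of steady-state expectations as a feasible point of \textsf{FA}. Concretely, working under the stationary distribution of the Markov chain induced by $\pi$, set $\bar\lambda_{ij} := \Ex_\pi[S_{ij}]$, $\bar y_{ij} := \Ex_\pi[Y_{ij}]$, and $d_{ij} := \Ex_\pi[D_{ij}]$ for every $(i,j)\in\mathcal{E}$. The task is then to check that $(\mathbf{d},\bar{\mathbf{y}},\bar{\boldsymbol{\lambda}})$ satisfies each of \eqref{FA linking constraint}--\eqref{FA non_negative}. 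The nonnegativity constraints \eqref{FA non_negative} are immediate, since each of these quantities is the expectation of a nonnegative random variable.

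For the flow-balance constraint \eqref{FA linking constraint}, I would take expectations in the carrier dynamics \eqref{eq: dynamics of St}, $S_{ijt+1}=Z_{ijt}+\Lambda_{ijt+1}$, under the stationary distribution. Because $\mathbf{Z}_{it}$ is multinomial with parameters $\big(\sum_{k\in\delta^-(i)}Y_{kit},\,\mathbf{q}_i\big)$, the tower property gives $\Ex_\pi[Z_{ijt}] = q_{ij}\sum_{k\in\delta^-(i)}\Ex_\pi[Y_{kit}]$, while $\Ex_\pi[\Lambda_{ijt+1}]=\lambda_{ij}$. Stationarity forces $\Ex_\pi[S_{ijt+1}]=\Ex_\pi[S_{ijt}]=\bar\lambda_{ij}$, so substituting yields precisely $\bar\lambda_{ij}=\sum_{k\in\delta^-(i)}q_{ij}\bar y_{ki}+\lambda_{ij}$, i.e., \eqref{FA linking constraint}.

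The two remaining constraints come from pathwise inequalities. Constraint \eqref{FA demand constraint}, $\bar y_{ij}\le d_{ij}$, follows by taking expectations of $Y_{ijt}\le D_{ijt}$, which holds on every sample path because a carrier can only be awarded a load that exists (this is the same inequality implicit in the penalty term $b_{ij}(D_{ijt}-Y_{ijt})$ and in the allocation rules of all the mechanisms considered). Constraint \eqref{FA inflow constraint}, $\bar y_{ij}\le\bar\lambda_{ij}$, follows by taking expectations of $Y_{ijt}\le S_{ijt}$, which is immediate from the accounting identity \eqref{eq: sum Y and V = S}, $Y_{ijt}+V_{ijt}=S_{ijt}$, together with $V_{ijt}\ge0$.

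I expect the main technical obstacle to be the existence and finiteness of these steady-state expectations and the legitimacy of the manipulations above — in particular, exchanging expectation with the long-run average and applying the tower property to the (finite but state-dependent) sums defining $Z_{ijt}$. This is where I would invoke the Lyapunov argument inside the proof of Proposition~\ref{Prop: stationary}: that argument not only establishes positive recurrence but also bounds $\Ex[\hat S_t]$ for a process dominating the total number of carriers, which yields $\Ex_\pi[S_{ij}]<\infty$ and hence $\Ex_\pi[Y_{ij}],\,\Ex_\pi[Z_{ij}]<\infty$; finiteness of $d_{ij}=\Ex_\pi[D_{ij}]$ follows from the boundedness of the demand rate function. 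Once these integrability facts are in place, all the steady-state identities and inequalities pass through, and $(\mathbf{d},\bar{\mathbf{y}},\bar{\boldsymbol{\lambda}})$ is feasible for \textsf{FA}, which is the assertion of the lemma.
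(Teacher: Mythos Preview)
Your proposal is correct and follows essentially the same approach as the paper: pass to the stationary distribution via Proposition~\ref{Prop: stationary}, take expectations of the dynamics \eqref{eq: dynamics of St} using the multinomial structure of $\mathbf{Z}_{it}$ to obtain the flow-balance constraint, and take expectations of the pathwise inequalities $Y_{ijt}\le D_{ijt}$ and $Y_{ijt}\le S_{ijt}$ (the latter via \eqref{eq: sum Y and V = S}) for the remaining constraints. If anything, you are slightly more careful than the paper in explicitly flagging the integrability issues and invoking the Lyapunov bound from Proposition~\ref{Prop: stationary} to justify finiteness of the steady-state expectations.
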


\begin{proof}{Proof of Lemma~\ref{lemma: necessary conditions for any stationary mechanism}.}
	We showed that any mechanism has a stationary distribution by Proposition \ref{Prop: stationary}. Consider any platform mechanism $\pi\in \Pi$. As the system is assumed to be in a steady state, we have
	$$
	\Ex [\mathbf{S}_{t+1}] = \Ex [\mathbf{S}_{t}],
	$$
	or alternatively,
	\rr{$$
		\Ex [S_{jt+1}] = \Ex [S_{jt}], \quad \forall j \in \mathcal{N}, 
		$$
		where the expectation is taken with respect to the stationary distribution of the Markov chain induced by a given policy. Taking expectation on both sides of Eq~\eqref{eq: sum Y and V = S} and defining $\bar{S}_{j}:=\Ex [S_{jt}]$, $\bar{Y}_{jk} := \Ex[Y_{jkt}]$, and $\bar{V}_j := \Ex[V_{jt}]$, we have 
		\begin{equation}  
			\bar{S}_{j} = \Ex \left[\sum_{k\in \delta^+(j)} Y_{jkt} + V_{jt} \right] = \sum_{k\in \delta^+(j)} \bar{Y}_{jk} + \bar{V}_j.
	\end{equation}}
	Taking expectation on both sides of the system dynamics equation Eq~\eqref{eq: dynamics of St}, we have
	\rr{\begin{align}  
			\bar{S}_{j} &= \Ex \left[ \sum_{i\in \delta^-(j)} Z_{ijt} + \Lambda_{jt+1} \right]\nonumber\\
			&= \sum_{i\in \delta^-(j)} \Ex \left[ \Ex \left[ Z_{ijt} | Y_{ijt} \right]\right] + \Ex[\Lambda_{jt+1}] \nonumber \\
			&= \sum_{i\in \delta^-(j)}  \Ex \left[ \Ex \left[ Y_{ijt}  q_{ij} | Y_{ijt} \right] \right] + \lambda_j \label{Z conditional on Y} \\
			&= \sum_{i\in \delta^-(j)} q_{ij} \Ex \left[ Y_{ijt}  \right] + \lambda_j \nonumber \\
			&= \sum_{i\in \delta^-(j)} q_{ij} \bar{Y}_{ij} + \lambda_j, \nonumber 
		\end{align}
		where Eq~\eqref{Z conditional on Y} holds because for a given value of $Y_{ijt}$, $Z_{ijt}$ follows a binomial distribution with parameters $(q_{ij},Y_{ijt})$.} Notice that $\bar{Y}_{ij}$ represents the long-run average fluid rate of carriers who actually ship a load from node $i$ to node $j$, which should be no more than the (fluid) demand rate $d_{ij}$ from node $i$ to node $j$, i.e., $\bar{Y}_{ij}\le d_{ij}$. Then, the following constraints are necessary for any mechanism under which the system is stable:
	\rr{\begin{align*}
			& \sum_{i\in\delta^{-}(j)}q_{ij}\bar{Y}_{ij}+\lambda_j = \sum_{k\in\delta^{+}(j)}\bar{Y}_{jk}+\bar{V}_{j} , \quad \forall j\in \mathcal{N}, \\
			& \bar{Y}_{ij} \leq d_{ij}, \quad \quad \forall (i,j)\in\mathcal{E}, \\
			& \bar{Y}_{ij} \ge 0, \quad \quad \forall (i,j)\in\mathcal{E}, \\
			& \bar{V}_{i} \ge 0, \quad \quad \forall i\in \mathcal{N}. 
	\end{align*}}
	Therefore, we can conclude that the constraints of \textsf{FA} are necessary conditions for any mechanism under which the system is stable.
	\Halmos 
\end{proof}

\vspace{0.1in}

We next analyze the expected total payment made to all the carriers at a given node. Consider node $i \in \mathcal{N}$ under a state ($\mathbf{S},\mathbf{D}$) in the stationary distribution, and we index the carriers at node $i$ as $s\in \{1,...,S_{i}\}$. Let $P_{i}$ denote the payment made to all the carriers who have transported a load originating from node $i$, where $P_{i} := \sum_{s = 1}^{S_i} p_{i}^s (\mathbf{C}_i^s)$.  

\begin{lemma}\label{lemma: lower bound of P}
	Under a state in the stationary distribution of the system induced by the platform's mechanism ${\pi}(\mathbf{S},\mathbf{D})=(\mathcal{M}_r,\mathcal{M}_p)$ that is IC and IR, the expected total payment to all the carriers in this period is lower bounded by
	\rr{\begin{equation}  \label{eq: total payment from all carriers}
			\Ex \left[{P}_{i} \right]  
			\ge \sum_{j\in \delta^+(i)} \left[ \ln \left( \frac{{\Ex}[{Y}_{ij}]}{{\Ex}[S_{i}]-\sum_{k\in \delta^+(i)}{\Ex}[{Y}_{ik}]}\right) +\alpha_{ij}  \right]  \frac{1}{\beta}{\Ex}[{Y}_{ij}], \quad \forall i\in \mathcal{N}\nonumber.
	\end{equation}}
\end{lemma}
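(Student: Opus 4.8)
The plan is to first establish a conditional version of the bound — conditioning on the system state $(\mathbf{S},\mathbf{D})$ under the stationary distribution — and then remove the conditioning with Jensen's inequality, exploiting the convexity of $F^{-1}_{ij}(x_1/x_2)x_1$ twice: once to aggregate across the carriers present on the lane, once to aggregate across states. So the target of the first phase is $\Ex[P_{ij}\mid\mathbf{S},\mathbf{D}]\ge F^{-1}_{ij}\!\big(\Ex[Y_{ij}\mid\mathbf{S},\mathbf{D}]/S_{ij}\big)\,\Ex[Y_{ij}\mid\mathbf{S},\mathbf{D}]$.

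First I would fix $(\mathbf{S},\mathbf{D})$ and apply the revenue-equivalence identity of Proposition~\ref{prop:p = a and psi} to write $\Ex[P_{ij}\mid\mathbf{S},\mathbf{D}]=\sum_{s=1}^{S_{ij}}\Ex\!\big[a^s_{ij}(C^s_{ij})\psi_{ij}(C^s_{ij})\mid\mathbf{S},\mathbf{D}\big]$. The heart of the argument is a single-carrier estimate: for any allocation rule $a$ taking values in $[0,1]$, with $x:=\Ex[a(C^s_{ij})\mid\mathbf{S},\mathbf{D}]$, I claim $\Ex[a(C^s_{ij})\psi_{ij}(C^s_{ij})\mid\mathbf{S},\mathbf{D}]\ge F^{-1}_{ij}(x)\,x$. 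To get this I would put $\tau:=F^{-1}_{ij}(x)$ and start from the pointwise inequality
\[
a(c)\big(\psi_{ij}(c)-\psi_{ij}(\tau)\big)\ \ge\ \mathbf{1}\{c\le\tau\}\,\big(\psi_{ij}(c)-\psi_{ij}(\tau)\big),
\]
which holds because $\psi_{ij}$ is increasing and $0\le a\le 1$ (on $\{c\le\tau\}$ the bracketed factor is nonpositive and $a\le1$; on $\{c>\tau\}$ it is nonnegative and $a\ge0$). Taking expectations, cancelling the common term $\psi_{ij}(\tau)x$ via $\Ex[a(C^s_{ij})\mid\mathbf{S},\mathbf{D}]=\Ex[\mathbf{1}\{C^s_{ij}\le\tau\}\mid\mathbf{S},\mathbf{D}]=x$, and then invoking the elementary identity $\int \psi_{ij}(c)\,\mathbf{1}\{c\le\tau\}f_{ij}(c)\,dc=\tau F_{ij}(\tau)$ — obtained by substituting $\psi_{ij}(c)=c+F_{ij}(c)/f_{ij}(c)$ and integrating by parts, exactly as in the proof of Proposition~\ref{prop:p = a and psi} — leaves precisely $\Ex[a(C^s_{ij})\psi_{ij}(C^s_{ij})\mid\mathbf{S},\mathbf{D}]\ge\tau F_{ij}(\tau)=F^{-1}_{ij}(x)x$, since $F_{ij}(\tau)=x$.

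I would then sum over $s=1,\dots,S_{ij}$. Setting $x^s:=\Ex[a^s_{ij}(C^s_{ij})\mid\mathbf{S},\mathbf{D}]$, so that $\sum_s x^s=\Ex[Y_{ij}\mid\mathbf{S},\mathbf{D}]$, and using that $g(x):=F^{-1}_{ij}(x)x$ is convex (the $x_2=1$ instance of the standing assumption that $F^{-1}_{ij}(x_1/x_2)x_1$ is convex), discrete Jensen over the $S_{ij}$ carriers gives $\sum_s g(x^s)\ge S_{ij}\,g\!\big(\tfrac1{S_{ij}}\sum_s x^s\big)=F^{-1}_{ij}\!\big(\Ex[Y_{ij}\mid\mathbf{S},\mathbf{D}]/S_{ij}\big)\,\Ex[Y_{ij}\mid\mathbf{S},\mathbf{D}]$, which is the conditional bound. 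To finish I would take expectations over the stationary distribution of $(\mathbf{S},\mathbf{D})$ and apply Jensen's inequality a second time, now to the jointly convex map $(x_1,x_2)\mapsto F^{-1}_{ij}(x_1/x_2)x_1$ at the random pair $\big(\Ex[Y_{ij}\mid\mathbf{S},\mathbf{D}],\,S_{ij}\big)$, using the tower property $\Ex\big[\Ex[Y_{ij}\mid\mathbf{S},\mathbf{D}]\big]=\Ex[Y_{ij}]$ to land on $\Ex[P_{ij}]\ge F^{-1}_{ij}\!\big(\Ex[Y_{ij}]/\Ex[S_{ij}]\big)\Ex[Y_{ij}]$.

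The step I expect to be the main obstacle is the single-carrier estimate: it must hold for \emph{every} IC and IR mechanism, so no monotonicity or threshold structure of the allocation rule can be assumed up front, and the bound has to be squeezed purely from the revenue-equivalence formula and the monotonicity of $\psi_{ij}$ — this is exactly the role of the rearrangement inequality comparing an arbitrary $a(\cdot)$ with the threshold rule $\mathbf{1}\{c\le\tau\}$. Some routine care will also be needed for degenerate cases ($x^s\in\{0,1\}$, $S_{ij}=0$, and a possibly strictly positive left endpoint of the support of $C^s_{ij}$), but these do not affect the integration-by-parts identity, since $F_{ij}$ vanishes at the left endpoint.
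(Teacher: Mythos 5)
Your proof is correct, and it follows the same overall strategy as the paper's: reduce the expected payment to the expected virtual cost via the revenue-equivalence identity of Proposition~\ref{prop:p = a and psi}, lower-bound that by the value of a threshold (posted-price-like) allocation with the same expected allocation probability, identify the threshold rule's value with $F^{-1}_{ij}(x)x$, and finish with Jensen's inequality applied to the assumed-convex map $(x_1,x_2)\mapsto F^{-1}_{ij}(x_1/x_2)x_1$. Where you differ is in how the key comparison with the threshold rule is executed. The paper relaxes the allocation via an auxiliary minimization over $\zeta^s\in[0,1]$ with a Lagrange multiplier $\iota_{ij}$, which decouples across the (i.i.d.) carriers and produces a single common threshold for all of them; it then evaluates $S_{ij}\hat{\Ex}[I_{ij}(C_{ij},\iota_{ij})\psi_{ij}(C_{ij})]$ by invoking revenue equivalence a second time against a posted-price mechanism at price $\iota_{ij}$. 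You instead prove a per-carrier estimate $\Ex[a(C)\psi(C)]\ge F^{-1}_{ij}(x)x$ directly, via the pointwise rearrangement inequality against $\mathbf{1}\{c\le\tau\}$ and an explicit integration-by-parts computation of $\int_0^\tau\psi_{ij}(c)f_{ij}(c)\,dc=\tau F_{ij}(\tau)$, and then aggregate the per-carrier bounds with a discrete Jensen step using convexity of $g(x)=F^{-1}_{ij}(x)x$. Your route is more elementary and self-contained (it avoids the Lagrangian and the second appeal to revenue equivalence, and it does not need the carriers' allocation rules to share a common threshold), at the cost of one extra convexity step; both arguments rest on exactly the same regularity assumptions, so the two proofs are interchangeable.
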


\begin{proof}{Proof of Lemma~\ref{lemma: lower bound of P}.}
	Consider a state ($\mathbf{S}$,$\mathbf{D}$) in the stationary distribution of the system. For notation simplicity, we will use 
	$\hat{\Ex}[\cdot]$ to denote the conditional expectation $\Ex [\cdot | \mathbf{S}, \mathbf{D}]$ in the proof.
	
	By Proposition \ref{prop:p = a and psi}, we have $ \hat{\Ex}[{p}_{ij}^s({C}^s_{ij})]=\hat{\Ex}[{a}_{ij}^s({C}^s_{ij}) {\psi}_{ij}(C^s_{ij})]$. \rr{In this proof, we use $\hat{x}_{ij}(\mathbf{C}^s_{i})$ to denote the probability that the carrier $s$ at node $i$ chooses lane $(i,j)$ if his opportunity cost vector is $\mathbf{C}^s_{i}$. Then the expected total payment made by the platform to carrier $s$ at node $i$ is given by
		\begin{align*}
			\hat{\Ex}[p^s_i(\mathbf{C}^s_{i})]&=\sum_{j\in\delta^+(i)}\hat{x}_{ij}(\mathbf{C}^s_{i})\hat{\Ex}[{p}_{ij}^s({C}^s_{ij})]\\
			& = \sum_{j\in\delta^+(i)}\hat{x}_{ij}(\mathbf{C}^s_{i})\hat{\Ex}[{a}_{ij}^s({C}^s_{ij}) {\psi}_{ij}(C^s_{ij})]\\
			& = \hat{\Ex}\left[\sum_{j\in\delta^+(i)} \hat{x}_{ij}(\mathbf{C}^s_{i}) {a}_{ij}^s({C}^s_{ij}) {\psi}_{ij}(C^s_{ij})\right]\\
			& = \hat{\Ex} \left[\mathbf{a}_i^s(\mathbf{C}_i^s)\cdot \boldsymbol{\psi}_i(\mathbf{C}_i^s)\right],
		\end{align*}
		where the $j^{\text{th}}$ entry of vector $\mathbf{a}_i^s(\mathbf{C}_i^s)$ is $\hat{x}_{ij}(\mathbf{C}^s_{i}){a}_{ij}^s({C}^s_{ij})$ (which is clearly less than or equal to 1), and the $j^{\text{th}}$ entry of vector $\boldsymbol{\psi}_i(\mathbf{C}_i^s)$ is ${\psi}_{ij}(C^s_{ij})$.
		Then the expected total payment made by the platform to all carriers at node $i$ is given by}
	\rr{\begin{align}
			\hat{\Ex}[{P}_{i}] &= \hat{\Ex} \left[\sum_{s=1}^{S_i} \mathbf{a}_i^s(\mathbf{C}_i^s)\cdot \boldsymbol{\psi}_i(\mathbf{C}_i^s)\right]\nonumber\\
			&\ge \hat{\Ex}\left[ \min \sum_{s=1}^{S_i} \boldsymbol{\xi}_i^s\cdot \boldsymbol{\psi}_i(\mathbf{C}_i^s), \text{ s.t. } \boldsymbol{\xi}_i^s\in \Delta_i, \sum_{s=1}^{S_i} \boldsymbol{\xi}_i^s\ge \sum_{s=1}^{S_i} \mathbf{a}_i^s(\mathbf{C}_i^s) \right]\nonumber\\
			&\ge \hat{\Ex} \left[\min \sum_{s=1}^{S_i} \boldsymbol{\xi}_i^s\cdot \boldsymbol{\psi}_i(\mathbf{C}_i^s)+\boldsymbol{\eta}_{i}\cdot \left(\sum_{s=1}^{S_i} \mathbf{a}_i^s(\mathbf{C}^s_i)-\sum_{s=1}^{S_i} \boldsymbol{\xi}_i^s \right), \text{ s.t. } \boldsymbol{\xi}_i^s\in \Delta_i\right]\nonumber
		\end{align}
		where $\Delta_i := \left\{\boldsymbol{\xi}_i\in \mathbb{R}^n_+: \sum_{j\in \delta^+(i)} \xi_{ij}\le 1 \right\}$. The last inequality follows from relaxing the constraints $\sum_{s=1}^{S_i} \boldsymbol{\xi}_i^s\ge \sum_{s=1}^{S_i} \mathbf{a}_i^s(\mathbf{C}_i^s)$ with Lagrangian multipliers $\boldsymbol{\eta}_i = (\eta_{ij}\ge0,\forall j\in \delta^+(i))$ (the value of $\boldsymbol{\eta}_i$ will be specified later). It then follows that}  
	\rr{\begin{align}
			&\hat{\Ex} \left[\min \sum_{s=1}^{S_i} \boldsymbol{\xi}_i^s\cdot \boldsymbol{\psi}_i(\mathbf{C}_i^s)+\boldsymbol{\eta}_{i}\cdot \left(\sum_{s=1}^{S_i} \mathbf{a}_i^s(\mathbf{C}^s_i)-\sum_{s=1}^{S_i} \boldsymbol{\xi}_i^s\right), \text{ s.t. } \boldsymbol{\xi}_i^s\in \Delta_i\right] \nonumber\\
			= & \, \boldsymbol{\eta}_{i} \cdot \hat{\Ex} \left[\sum_{s=1}^{S_i} \mathbf{a}_i^s(\mathbf{C}_i^s)\right] + \hat{\Ex} \left[\min \sum_{s=1}^{S_i} \boldsymbol{\xi}_i^s\cdot (\boldsymbol{\psi}_i(\mathbf{C}_i^s)-\boldsymbol{\eta}_{i}), \text{ s.t. } \boldsymbol{\xi}_i^s\in \Delta_i\right] \nonumber\\
			= & \, \boldsymbol{\eta}_{i} \cdot \hat{\Ex} \left[\sum_{s=1}^{S_i} \mathbf{a}_i^s(\mathbf{C}_i^s)\right] + S_i\hat{\Ex} \left[\min \, \boldsymbol{\xi}_i\cdot (\boldsymbol{\psi}_i(\mathbf{C}_i)-\boldsymbol{\eta}_{i}), \text{ s.t. } \boldsymbol{\xi}_i\in \Delta_i\right] \label{equation:symmetry} \\
			= & \, \boldsymbol{\eta}_{i}\cdot \hat{\Ex}[\mathbf{Y}_i] + S_i \hat{\Ex}\left[\min \, \boldsymbol{\xi}_i\cdot (\boldsymbol{\psi}_i(\mathbf{C}_i)-\boldsymbol{\eta}_{i}), \text{ s.t. } \boldsymbol{\xi}_i\in \Delta_i\right] \label{equation:allocation define Y} \\
			= & \, \boldsymbol{\eta}_{i}\cdot \hat{\Ex}[\mathbf{Y}_i] + S_i \hat{\Ex}\left[\min  \{\psi_{ij}(\mathbf{C}_i)-\eta_{ij}, \forall j=1,...,n\}\wedge 0\right] \nonumber \\
			= & \, \boldsymbol{\eta}_{i}\cdot \hat{\Ex}[\mathbf{Y}_i] + S_i \hat{\Ex}\left[\mathbf{I}_i(\mathbf{C}_i,\boldsymbol{\eta}_{i})\cdot(\boldsymbol{\psi}_i(\mathbf{C}_i)-\boldsymbol{\eta}_{i})\right]  \label{eq: def of I min among all j} \\
			= & \, \boldsymbol{\eta}_{i}\cdot \left(\hat{\Ex}[\mathbf{Y}_i] -S_i \hat{\Ex}\left[\mathbf{I}_i(\mathbf{C}_i,\boldsymbol{\eta}_{i})\right]\right) + S_i \hat{\Ex}\left[\mathbf{I}_i(\mathbf{C}_i,\boldsymbol{\eta}_{i})\cdot \boldsymbol{\psi}_i(\mathbf{C}_i)\right].\label{eq: existence of eta_Y}
	\end{align}}
	A few remarks are in order. Eq~\eqref{equation:symmetry} follows from the assumption of symmetric carriers where \rr{$\mathbf{C}_i^s$ are independent and identically distributed, and index $s$ is dropped from this equation onwards. Eq~\eqref{equation:allocation define Y} follows from the relationship between the allocation probabilities $a_{ij}^s$, the lane choice probabilities $\hat{x}_{ij}$, and the number of transported loads $Y_{ij}$ for each O-D pair, where  $\hat{\Ex} \left[\sum_{s=1}^{S_i}\hat{x}_{ij}(\mathbf{C}^s_i) a^s_{ij}(\mathbf{C}^s_i) \right] = \hat{\Ex}[Y_{ij}]$. Eq~\eqref{eq: def of I min among all j} holds by the definition of the 0-1 vector $\mathbf{I}_i(\mathbf{C}_i,\boldsymbol{\eta}_i)$, where its $j^{\mathrm{th}}$ entry is equal to one if $\psi_{ij}(\mathbf{C}_i)-\eta_{ij}$ is non-positive and is the minimum among $\{\psi_{ik}(\mathbf{C}_i)-\eta_{ik}: k \in \delta^+(i)\}$, and equal to zero otherwise.
		Note that the function $\hat{\Ex}[\mathbf{I}_i(\mathbf{C}_i,\boldsymbol{\eta}_i)]$ is continuous (since $\mathbf{C}_i$ follows a continuous distribution) and increasing in $\boldsymbol{\eta}_i$. Therefore,
		for any $\mathbf{Y}_i$ such that $\hat{\Ex}[\mathbf{Y}_i]\cdot \mathbf{1}\le S_i$, we can pick some $\boldsymbol{\eta}_{Y_i}$ such that $\hat{\Ex}[\mathbf{Y}_i]=S_i \hat{\Ex}[\mathbf{I}_i(\mathbf{C}_i,\boldsymbol{\eta}_{Y_i})]$. Then it follows from Eq~\eqref{eq: existence of eta_Y} that
		\begin{equation}
			\label{lower bound of the auction price in a single origin}
			\hat{\Ex}\left[P_i\right] \ge S_i \hat{\Ex}\left[\mathbf{I}_i(\mathbf{C}_i,\boldsymbol{\eta}_{Y_i})\cdot \boldsymbol{\psi}_i(\mathbf{C}_i)\right]. 
	\end{equation}}
	\rr{Consider a posted price mechanism with the price vector $\mathbf{p}_i$ given by
		\[
		p_{ij}=\left[\ln \left (\frac{\hat{\Ex}[{Y}_{ij}]}{S_i-\sum_{k\in \delta^+(i)} \hat{\Ex}[{Y}_{ik}]} \right)+\alpha_{ij} \right]\frac{1}{\beta}.
		\]
		By the property of the MNL choice model, the choice probabilities of carriers under this price vector  are $\hat{\Ex}[\mathbf{Y}_i]/S_i$. 
		That is, $\mathbf{p}_i$ satisfies
		\[
		\frac{\hat{\Ex}[{Y}_{ij}]}{S_i}=\frac{\exp(\beta p_{ij} -\alpha_{ij})}{\sum_{k\in \delta^+(i)} \exp(\beta p_{ik} -\alpha_{ik})+1}.
		\]
		Note that for loads on lane $(i,j)$, this posted price mechanism has the same allocation probability as a mechanism with allocation function ${I}_{ij}(\mathbf{C}_i,\boldsymbol{\eta}_{Y_i})$.
		By the revenue equivalence principle (Proposition~\ref{prop:p = a and psi}), any mechanism that results in the same allocation probability must have the same expected payment. 
		Therefore, we have
		\[
		S_{i} \hat{\Ex}\left[I_{ij}(\mathbf{C}_i,\boldsymbol{\eta}_{Y_i}) \psi_{ij}(C_{ij})\right]
		=S_{i}p_{ij}\hat{\Ex}[Y_{ij}]/S_{i}=p_{ij} \hat{\Ex}[Y_{ij}],
		\]
		and taking summation over $j\in\delta^+(i)$ on both side, we have
		\[
		S_i \hat{\Ex}\left[\mathbf{I}_i(\mathbf{C}_i,\boldsymbol{\eta}_{Y_i})\cdot \boldsymbol{\psi}_i(\mathbf{C}_i)\right]
		=\mathbf{{p}}_i\cdot \hat{\Ex}[\mathbf{Y}_i],
		\]
		where the left-hand side is a lower bound of the expected payment of the mechanism being considered (see Eq~\eqref{lower bound of the auction price in a single origin}) and the right-hand side is the expected payment of the posted price mechanism. }
	\rr{Combining the above equation with Eq \eqref{lower bound of the auction price in a single origin}, it then follows that
		\begin{align}
			{\Ex}\left[P_i\right] 
			&= \Ex\left[\hat{\Ex}\left[\sum_{s=1}^{S_i} {p}_i(\mathbf{C}_i)\right]\right]\nonumber\\
			&\ge {\Ex}[\mathbf{{p}}_i\cdot \hat{\Ex}[\mathbf{Y}_i]]\nonumber\\
			&=\sum_{j\in \delta^+(i)} \Ex\left[\left[\ln \left( \frac{\hat{\Ex}[{Y}_{ij}]}{S_i-\sum_{k\in \delta^+(i)}\hat{\Ex}[{Y}_{ik}]}\right) +\alpha_{ij} \right]  \frac{1}{\beta}\hat{\Ex}[{Y}_{ij}]\right]\nonumber\\
			&\ge \sum_{j\in \delta^+(i)} \left[\ln \left( \frac{{\Ex}[{Y}_{ij}]}{{\Ex}[S_i]-\sum_{k\in \delta^+(i)}{\Ex}[{Y}_{ik}]}\right) +\alpha_{ij} \right]  \frac{1}{\beta}{\Ex}[{Y}_{ij}]\nonumber.
		\end{align}
		The last inequality follows from the Jensen's inequality. To see this, recall that we showed in Section \ref{subsection: convex reformulation} that $\frac{1}{\beta}\left[\ln \left (\frac{\bar{y}_{ij}}{\bar{v}_{i}} \right)+\alpha_{ij} \right] \bar{y}_{ij}$ is a convex function, where $\bar{v}_i = \bar{\lambda}_i - \sum_j \bar{y}_{ij}$. As convexity is preserved under linear transformations, $\frac{1}{\beta}\left[\ln \left (\frac{\bar{y}_{ij}}{\bar{\lambda}_{i}-  \sum_j \bar{y}_{ij}} \right)+\alpha_{ij} \right] \bar{y}_{ij}$ is also a convex function.}
	\Halmos 
\end{proof}

\vspace{0.1in}

\begin{proof}{Proof of Theorem \ref{theorem: FA provides an upper bound to an acution}.}
	
	By Proposition \ref{Prop: stationary} and Lemma \ref{lemma: necessary conditions for any stationary mechanism}, any platform policy under which the system is stable should satisfy the constraints of the \textsf{FA}. The stability of the system implies that the long-run average profit is the same as the expected profit under the stationary distribution. In the remainder of the proof, we will show that the optimal solution to \textsf{FA} gives an upper bound on the expected profit of any policy ${\pi}\in \Pi$ under the stationary distribution. By the revelation principle, we can restrict our focus to IC and IR mechanisms. Consider the expected payment made by the platform ${\Ex}\left[P_{ij}\right]$ under an IC and IR mechanism ${\pi}(\mathbf{S},\mathbf{D})=(\mathcal{M}_r,\mathcal{M}_p)$. By Lemma \ref{lemma: lower bound of P}, we have
	\rr{$$
		\sum_{j\in \delta^+(i)}{\Ex}\left[P_{ij}\right] = {\Ex}\left[P_i\right] 
		\ge \sum_{j\in \delta^+(i)} \left[\ln \left( \frac{{\Ex}[{Y}_{ij}]}{{\Ex}[S_i]-\sum_{k\in \delta^+(i)}{\Ex}[{Y}_{ik}]}\right) +\alpha_{ij} \right]  \frac{1}{\beta}{\Ex}[{Y}_{ij}]\nonumber.
		$$}
	Then, it follows that the expected profit of the platform mechanism $\pi$ is upper bounded by
	\rr{\begin{align}
			&\sum_{(i,j)\in \mathcal{E}}{\Ex}[r_{ij}(d_{ij})d_{ij} - b_{ij}(D_{ij}-Y_{ij})
			- P_{ij}] \nonumber\\
			\le & \, \sum_{(i,j)\in \mathcal{E}}r_{ij}(d_{ij})d_{ij} - b_{ij}(d_{ij}-{\Ex}\left[Y_{ij}\right])
			- \left[\ln \left( \frac{{\Ex}[{Y}_{ij}]}{{\Ex}[S_i]-\sum_{k\in \delta^+(i)}{\Ex}[{Y}_{ik}]}\right) +\alpha_{ij} \right]  \frac{1}{\beta}{\Ex}[{Y}_{ij}]\nonumber\\
			\le & \,\sum_{(i,j)\in \mathcal{E}}r^*_{ij}(d^*_{ij})d^*_{ij} -b_{ij}(d^*_{ij} - \bar{y}^*_{ij}) - p_{ij}^*\bar{y}^*_{ij}\nonumber.\end{align}}
	Therefore, the optimal value of the \textsf{FA} provides an upper bound on the long-run average profit of any platform mechanism, which completes the proof. \Halmos 
\end{proof}

\subsection*{Proofs in Section \ref{section: posted price mechanism}}
In order to prove Theorem~\ref{theorem: SI_bound}, we first present some auxiliary results which will prove useful for the analysis of the system performance under \textsf{SP}.
\rr{Notice that although \textsf{SP} uses a static posted price for each O-D pair that is independent of the system state, a carrier's choice probabilities are not constants and may dynamically change within each time period. In particular, the choice probabilities depend on the remaining shipper demands upon the carrier's arrival.
	This poses challenges in analyzing the \textsf{SP} mechanism directly. Instead, we will consider an auxiliary static posted price mechanism with \emph{fixed} choice probabilities, denoted as \textsf{SP-2}, to facilitate our analysis. As we shall show later, the long-run average profit under \textsf{SP-2} is lower than that under \textsf{SP}. Moreover, we show that the \textsf{SP-2} mechanism is asymptotically optimal, which then immediately implies the asymptotic optimality of the \textsf{SP} mechanism.}

\rr{The \textsf{SP-2} mechanism also applies the optimal fluid prices $(\mathbf{{r}}^*,\mathbf{{p}}^*)$ to all system states. However, different from \textsf{SP} under which carriers only choose among the available loads, under \textsf{SP-2} each carrier chooses to book a load that maximizes his utility \emph{without} considering the system state (in particular, the remaining demands). In other words, the carrier makes his decisions only based on the offered posted price, and he may end up choosing a destination with zero remaining load, in which case the platform would reject this carrier. It is worth noting that \textsf{SP-2} is not a realistic policy to be implemented in practice and it is only considered for the purpose of theoretical analysis. Under \textsf{SP-2}, the choice probabilities $\mathbf{x}_{it}=(x_{ijt},\forall j\in\delta^+(i))$ and $w_{it}$ are given in Section \ref{subsection: mechanism}, and the posted price satisfies $p^*_{ij}=\frac{1}{\beta}\left[\ln \left (\frac{x_{ijt}}{w_{it}} \right)+\alpha_{ij} \right]$ for each $j\in \delta^+(i)$ (cf. Eq~\eqref{eq: def of p_ijt}). }

\rr{For a problem instance with scaling parameter $\theta$, let $\gamma^{\textsf{SP-2}}(\theta)$ denote the long-run average profit of the platform under the \textsf{SP-2} mechanism. We first show in Lemma \ref{lemma: gamma_SP-2 <= gamma_SP} that $\gamma^{\textsf{SP-2}}(\theta)$ is a lower bound of $\gamma^{\textsf{SP}}(\theta)$.}
\rr{\begin{lemma} \label{lemma: gamma_SP-2 <= gamma_SP}
		The long-run average profit of the platform under the \textsf{SP-2} mechanism is no more than that under the \textsf{SP} mechanism:
		$$
		\gamma^{\textsf{SP-2}}(\theta) \le \gamma^{\textsf{SP}}(\theta).
		$$
\end{lemma}}

\begin{proof}{Proof of Lemma \ref{lemma: gamma_SP-2 <= gamma_SP}.}
	\rr{First, notice that $p^*_{ij}\le b_{ij}$ for all $(i,j)\in \mathcal{E}$. To see this, suppose $p^*_{ij}> b_{ij}$, and then we have
		\[
		\sum_{(i, j)\in \mathcal{E}} r^*_{ij}d_{ij}(r^*_{ij}) -p^*_{ij} \bar{y}^*_{ij} - b_{ij}(d^*_{ij}-\bar{y}^*_{ij}) < \sum_{(i, j)\in \mathcal{E}} r^*_{ij}d_{ij}(r^*_{ij}) - b_{ij}d^*_{ij},
		\]
		which leads to a contradiction that $p^*_{ij}$ is the optimal price. Recall that the platform's long-run average profits under \textsf{SP} and \textsf{SP-2} are given by 
		\begin{align*}
			\gamma^{\textsf{SP}} &= \sum_{(i,j)\in \mathcal{E}} r^*_{ij}d_{ij}(r^*_{ij}) - b_{ij}d^*_{ij}   + \sum_{(i,j)\in \mathcal{E}}(b_{ij}- p^*_{ij})\Ex \left[Y^{\textsf{SP}}_{ij} \right] \\
			\gamma^{\textsf{SP-2}} &= \sum_{(i,j)\in \mathcal{E}} r^*_{ij}d_{ij}(r^*_{ij}) - b_{ij}d^*_{ij}   + \sum_{(i,j)\in \mathcal{E}}(b_{ij}- p^*_{ij})\Ex \left[Y^{\textsf{SP-2}}_{ij} \right],
		\end{align*}
		where $\Ex[Y^{\textsf{SP}}_{ij} ]$ and $\Ex[Y^{\textsf{SP-2}}_{ij} ]$ are the expected number of carriers who delivered a load from node $i$ to node $j$ in a state of the stationary distribution under \textsf{SP} and \textsf{SP-2}, respectively. For notation simplicity, we will omit the dependency on $\theta$ in the proof. We will next show $\Ex[Y^{\textsf{SP-2}}_{ij}]\le \Ex[Y^{\textsf{SP}}_{ij}]$, which then immediately leads to $\gamma^{\textsf{SP-2}}\le \gamma^{\textsf{SP}}$.}
	
	\rr{To show $\Ex[Y^{\textsf{SP-2}}_{ij}]\le \Ex[Y^{\textsf{SP}}_{ij}]$, consider a coupling ($\widetilde{Y}^{\textsf{SP}}_{ijt},\widetilde{Y}^{\textsf{SP-2}}_{ijt}$) for all $(i,j)\in\mathcal{E}$ and for all $t$ in the same probability space, where $\widetilde{Y}^{\textsf{SP}}_{ijt}\sim {Y}^{\textsf{SP}}_{ijt}$ and $\widetilde{Y}^{\textsf{SP-2}}_{ijt}\sim {Y}^{\textsf{SP-2}}_{ijt}$. We next show $\widetilde{Y}^{\textsf{SP-2}}_{ijt}\le \widetilde{Y}^{\textsf{SP}}_{ijt}$ for all $(i,j)\in\mathcal{E}$ and for all $t$ by induction. Without loss of generality, we can assume that ${S}^{\textsf{SP}}_{i1}={S}^{\textsf{SP-2}}_{i1}=0$ for all $i\in \mathcal{N}$ because the initial state does not affect the stationary distribution.}
	
	\rr{\textbf{\underline{Base Case}.} When $t=1$, it is clear that $\widetilde{Y}^{\textsf{SP}}_{ijt}= \widetilde{Y}^{\textsf{SP-2}}_{ijt}=0$ for all $(i,j)\in\mathcal{E}$ because our model assumes zero carrier in the marketplace at the beginning of period 1.}
	
	\rr{\textbf{\underline{Induction Step}.} Suppose $\widetilde{Y}^{\textsf{SP-2}}_{ijt}\le \widetilde{Y}^{\textsf{SP}}_{ijt}$ for all $(i,j)\in\mathcal{E}$. We set ${Z}^{\textsf{SP}}_{ijt}={Z}^{\textsf{SP-2}}_{ijt}+{Z}^{\textsf{DIFF}}_{ijt}$ and $\Lambda^{\textsf{SP}}_{jt+1}=\Lambda^{\textsf{SP-2}}_{jt+1}$, where ${Z}^{\textsf{DIFF}}_{ijt}$ follows the binomial distribution $B(\widetilde{Y}^{\textsf{SP}}_{ijt} - \widetilde{Y}^{\textsf{SP-2}}_{ijt}, q_{ij})$. Then, we have 
		\[
		{S}^{\textsf{SP}}_{jt+1}=\sum_{i\in\delta^-(j)}{Z}^{\textsf{SP-2}}_{ijt}+{\Lambda}^{\textsf{SP-2}}_{jt+1}+\sum_{i\in\delta^-(j)}{Z}^{\textsf{DIFF}}_{ijt}={S}^{\textsf{SP-2}}_{jt+1}+\sum_{i\in\delta^-(j)}{Z}^{\textsf{DIFF}}_{ijt}, \forall j\in \mathcal{N}.
		\]
		As ${Z}^{\textsf{DIFF}}_{ijt}\ge 0$, we have ${S}^{\textsf{SP}}_{jt+1}\ge {S}^{\textsf{SP-2}}_{jt+1}$ for each $j\in \mathcal{N}.$ Set $\mathbf{D}_{t+1}^{\textsf{SP}}=\mathbf{D}_{t+1}^{\textsf{SP-2}}$ and $\mathbf{C}_{it+1}^{\textsf{SP}}=(\mathbf{C}_{it+1}^{\textsf{SP-2}},\mathbf{C}_{it+1}^{\textsf{DIFF}})$ for each $i\in\mathcal{N}$, where $\mathbf{C}_{it+1}^{\textsf{DIFF}}$ is the opportunity cost vector for those $({S}^{\textsf{SP}}_{it+1}-{S}^{\textsf{SP-2}}_{it+1})$ carriers who arrive at node $i$ after $\mathbf{C}_{it+1}^{\textsf{SP-2}}$ is realized. Note that the static posted price mechanisms make load allocation decisions independently at each origin node.  
		As the decisions under the \textsf{SP} mechanism for the first ${S}^{\textsf{SP-2}}_{it+1}$ carriers who arrive at node $i$ do not depend on the carriers who arrive afterwards, we have $A^{[s],\textsf{SP}}_{ijt+1}(\mathbf{C}^{\textsf{SP-2}}_{it+1})=A^{[s],\textsf{SP}}_{ijt+1}(\mathbf{C}^{\textsf{SP}}_{it+1})$ for all $s\le S^{\textsf{SP-2}}_{it+1}$. }

	\rr{Let $\mathcal{X}^{\textsf{SP-2}}_{ijt+1}(\mathbf{C}^{\textsf{SP-2}}_{it+1})$ be the set of carriers who choose to deliver a load from node $i$ to node $j$ in period $t+1$ under the \textsf{SP-2} mechanism when carriers report their opportunity costs $\mathbf{C}^{\textsf{SP-2}}_{it+1}$. Note that we have $\widetilde{Y}^{\textsf{SP-2}}_{ijt+1}=\sum_{[s]\in\mathcal{X}^{\textsf{SP-2}}_{ijt+1}(\mathbf{C}_{it+1}^{\textsf{SP-2}})}A^{[s],\textsf{SP-2}}_{ijt+1}(\mathbf{C}^{\textsf{SP-2}}_{it+1})$, where the superscript $[s]$ is used to denote the $s^{\mbox{\scriptsize th}}$ carrier that arrives to the marketplace. Consider carrier $[s]\in\mathcal{X}^{\textsf{SP-2}}_{ijt+1}(\mathbf{C}_{it+1}^{\textsf{SP-2}})$:}
	
	\rr{\textbf{\underline{Case 1}.} If $A^{[s],\textsf{SP}}_{ijt+1}(\mathbf{C}^{\textsf{SP-2}}_{it+1})=1$, then all the carriers who arrive at node $i$ earlier than carrier $[s]$ and choose to deliver a load from node $i$ to node $j$ in period $t+1$ are accepted by the \textsf{SP} mechanism. More specifically, if $[s']\in \mathcal{X}^{\textsf{SP-2}}_{ijt+1}(\mathbf{C}^{\textsf{SP-2}}_{it+1})$ and $s'\le s$, then 
		$A^{[s'],\textsf{SP}}_{ijt+1}(\mathbf{C}^{\textsf{SP-2}}_{it+1})=A^{[s'],\textsf{SP-2}}_{ijt+1}(\mathbf{C}^{\textsf{SP-2}}_{it+1})=1$. 
		Otherwise, if $[s']\notin \mathcal{X}^{\textsf{SP-2}}_{ijt+1}(\mathbf{C}^{\textsf{SP-2}}_{it+1})$ and $s'\le s$, then $A^{[s'],\textsf{SP}}_{ijt+1}(\mathbf{C}^{\textsf{SP-2}}_{it+1})=1$ and $A^{[s'],\textsf{SP-2}}_{ijt+1}(\mathbf{C}^{\textsf{SP-2}}_{it+1})=0$ because the \textsf{SP-2} mechanism cannot accept any carrier not in $ \mathcal{X}^{\textsf{SP-2}}_{ijt+1}(\mathbf{C}^{\textsf{SP-2}}_{it+1})$. Then, we have $\sum_{s'=1}^{s}A^{[s'],\textsf{SP-2}}_{ijt+1}(\mathbf{C}^{\textsf{SP-2}}_{it+1})\le \sum_{s'=1}^{s}A^{[s'],\textsf{SP}}_{ijt+1}(\mathbf{C}^{\textsf{SP-2}}_{it+1})$. }
	
	\rr{\textbf{\underline{Case 2}.} If $A^{[s],\textsf{SP}}_{ijt+1}(\mathbf{C}^{\textsf{SP-2}}_{it+1})=0$, then it must be the case that $D^{[s],\textsf{SP}}_{ijt+1}=0$ since both \textsf{SP} and \textsf{SP-2} use the same posted price. In other words, all the $D^{\textsf{SP}}_{ijt+1}$ loads have been booked and the remaining demand when carrier $[s]$ arrives is zero, which then implies $D^{[s],\textsf{SP-2}}_{ijt+1} \ge D^{[s],\textsf{SP}}_{ijt+1}$. Therefore, we have  $\sum_{s'=1}^{s}A^{[s'],\textsf{SP-2}}_{ijt+1}(\mathbf{C}^{\textsf{SP-2}}_{it+1})\le \sum_{s'=1}^{s}A^{[s'],\textsf{SP}}_{ijt+1}(\mathbf{C}^{\textsf{SP-2}}_{it+1}) = D^{\textsf{SP}}_{ijt+1}$.}
	
	\rr{By the above two cases, we have 
		\[
		\widetilde{Y}^{\textsf{SP-2}}_{ijt+1}
		=\sum_{[s]\in\mathcal{X}^{\textsf{SP-2}}_{ijt+1}(\mathbf{C}_{it+1})}A^{[s],\textsf{SP-2}}_{ijt+1}(\mathbf{C}^{\textsf{SP-2}}_{it+1})\le \sum_{s=1}^{S^{\textsf{SP-2}}_{it+1}}A^{[s],\textsf{SP}}_{ijt+1}(\mathbf{C}^{\textsf{SP-2}}_{it+1})
		\le \sum_{s=1}^{S^{\textsf{SP}}_{it+1}}A^{[s],\textsf{SP}}_{ijt+1}(\mathbf{C}^{\textsf{SP}}_{it+1})= \widetilde{Y}_{ijt+1}^{\textsf{SP}}.
		\]
		The last inequality holds because the load allocation decisions for the first $S_{it+1}^{\textsf{SP-2}}$ carriers are not affected by the decisions for later carriers that arrive afterwards, and we have $ \sum_{s=1}^{S^{\textsf{SP}}_{it+1}}A^{[s],\textsf{SP}}_{ijt+1}(\mathbf{C}^{\textsf{SP}}_{it+1}) = \sum_{s=1}^{S^{\textsf{SP-2}}_{it+1}}A^{[s],\textsf{SP}}_{ijt+1}(\mathbf{C}^{\textsf{SP}}_{it+1}) + \sum_{s=S^{\textsf{SP-2}}_{it+1}+1}^{S^{\textsf{SP}}_{it}}A^{[s],\textsf{SP}}_{ijt+1}(\mathbf{C}^{\textsf{SP}}_{it+1})$. This completes the proof of the induction step. Therefore, we have $\widetilde{Y}^{\textsf{SP-2}}_{ijt}\le \widetilde{Y}^{\textsf{SP}}_{ijt}$, which implies
		$\Ex[{Y}^{\textsf{SP-2}}_{ij}]\le \Ex[{Y}^{\textsf{SP}}_{ij}]$ and hence $\gamma^{\textsf{SP-2}}\le\gamma^{\textsf{SP}}$.} \Halmos \end{proof}

\vspace{0.1in}

\rr{\noindent In view of Lemma \ref{lemma: gamma_SP-2 <= gamma_SP}, if the \textsf{SP-2} mechanism is asymptotically optimal, then the asymptotic optimality of the \textsf{SP} mechanism immediately follows. We next focus on establishing the asymptotic optimality of the \textsf{SP-2} mechanism. Let $X^{\textsf{SP-2}}_{ij}(\theta)$ and $Y^{\textsf{SP-2}}_{ij}(\theta)$ respectively denote the number of carriers who \emph{choose} to ship a load from node $i$ to node $j$ and who have actually delivered a load from node $i$ to node $j$ in a state of the stationary distribution under the \textsf{SP-2} mechanism, where $Y^{\textsf{SP-2}}_{ij}(\theta) = \min \{X^{\textsf{SP-2}}_{ij}(\theta), D^{\textsf{SP-2}}_{ij}(\theta) \}$. The following result provides an upper bound on $\Ex[X^{\textsf{SP-2}}_{ij}(\theta)] - \Ex[Y^{\textsf{SP-2}}_{ij}(\theta)]$, which measures the expected number of carriers in excess of the shipper demands in steady state under the \textsf{SP-2} mechanism.}

\begin{lemma} \label{(X-Y)_bound}
	Given a problem instance with scaling factor $\theta$, we have
	\begin{equation} \label{ineq: X_ij and Y_ij bound}
		\Ex[X^{\textsf{SP-2}}_{ij}(\theta)]-\Ex[Y^{\textsf{SP-2}}_{ij}(\theta)]\le O(\sqrt{\theta}), \quad \forall i \in \mathcal{N},
	\end{equation}
	where $\Ex[X^{\textsf{SP-2}}_{ij}(\theta)]$ and $\Ex[Y^{\textsf{SP-2}}_{ij}(\theta)]$ are the expected number of carriers who choose to ship a load from node $i$ to node $j$  and who actually delivered a load from node $i$ to node $j$, respectively, in a state of the stationary distribution under the \textsf{SP-2} mechanism.
\end{lemma}

\begin{proof}{Proof of Lemma~\ref{(X-Y)_bound}.}
	
	Consider a problem instance with scaling parameter $\theta$. Let $\mathbf{S}^{\textsf{SP-2}}(\theta)$ and $\mathbf{D}^{\textsf{SP-2}}(\theta)$ denote the number of carriers and the number of loads in the marketplace in a state of the stationary distribution under the \textsf{SP-2} mechanism. Let $\mathbf{X}^{\textsf{SP-2}}(\theta) = (X^{\textsf{SP-2}}_{ij}(\theta): (i,j) \in \mathcal{E})$ and $\mathbf{Y}^{\textsf{SP-2}}(\theta) = (Y^{\textsf{SP-2}}_{ij}(\theta): (i,j) \in \mathcal{E})$, where $X^{\textsf{SP-2}}_{ij}(\theta)$ and $Y^{\textsf{SP-2}}_{ij}(\theta)$ respectively denote the number of carriers who choose to ship a load and who actually shipped a load from node $i$ to node $j$ in a state under the stationary distribution. Consider an optimal solution ($\theta\mathbf{d}^*, \theta\mathbf{\bar{y}}^*,\mathbf{\theta\bar{v}}^*$) to the \textsf{FA}$(\theta)$ and the corresponding variables $(\mathbf{x}^*,\mathbf{y}^*,\boldsymbol{\bar{\lambda}}^*)$ and $(\mathbf{r}^*,\mathbf{p}^*)$ associated with this solution. We first show 
	\begin{equation} \label{ineq: S_i less than theta lambda bar}
		\Ex[S^{\textsf{SP-2}}_i(\theta)]\leq \theta\bar{\lambda}^*_i, \quad \forall i \in \mathcal{N}.
	\end{equation}
	Given the posted carrier-side prices $\mathbf{p}^*$, we have $x^*_{ij} = \Ex[X^{\textsf{SP-2}}_{ij}(\theta)]/\Ex[S^{\textsf{SP-2}}_i(\theta)]$. Let $y_{ij} = \Ex[Y^{\textsf{SP-2}}_{ij}(\theta)]/\Ex[S^{\textsf{SP-2}}_i(\theta)]$.  As $\Ex[Y^{\textsf{SP-2}}_{ij}(\theta)]\le\Ex[X^{\textsf{SP-2}}_{ij}(\theta)]$, we have $y_{ij}\le x^*_{ij}$. Then in steady state, we have
	\[
	\Ex[S^{\textsf{SP-2}}_{j}(\theta)] =  \sum_{i\in \delta^-(j)} q_{ij}\Ex[Y^{\textsf{SP-2}}_{ij}(\theta)] + \theta \lambda_{j}=  \sum_{i\in \delta^-(j)} q_{ij}y_{ij}\Ex[S^{\textsf{SP-2}}_{i}(\theta)] + \theta \lambda_{j}, \quad \forall j\in \mathcal{N}.
	\]
	As $y_{ij}\le x^*_{ij}$, it follows that $\Ex[S^{\textsf{SP-2}}_{j}(\theta)]\le \theta \bar{\lambda}_j^*$ for each $j \in \mathcal{N}$. Moreover, we have
	\begin{equation}  \label{eq: lambda bar x less than d}
		\bar{\lambda}^*_i{x}^*_{ij}\le d^*_{ij}, \quad \forall (i,j) \in \mathcal{E},
	\end{equation}
	since $\bar{\lambda}^*_i{x}^*_{ij}= \bar{x}^*_{ij}=\bar{y}^*_{ij}\le d^*_{ij}$.

	Now consider the expected number of carriers in excess of shipper demands $\Ex[X^{\textsf{SP-2}}_{ij}(\theta)] - \Ex[Y^{\textsf{SP-2}}_{ij}(\theta)]$:
	\begin{align*}
		\Ex[X^{\textsf{SP-2}}_{ij}(\theta)] - \Ex[Y^{\textsf{SP-2}}_{ij}(\theta)] &= \Ex [(X^{\textsf{SP-2}}_{ij}(\theta)-D^{\textsf{SP-2}}_{ij}(\theta))^+]\\
		&= \Ex [(X^{\textsf{SP-2}}_{ij}(\theta)-\theta d^*_{ij}+\theta d^*_{ij}-D^{\textsf{SP-2}}_{ij}(\theta))^+]\\
		&\leq \Ex [(X^{\textsf{SP-2}}_{ij}(\theta)-\Ex[X^{\textsf{SP-2}}_{ij}(\theta)]+\theta d^*_{ij}-D^{\textsf{SP-2}}_{ij}(\theta))^+]\\
		&\leq \Ex [|X^{\textsf{SP-2}}_{ij}(\theta)- \Ex[X^{\textsf{SP-2}}_{ij}(\theta)]+\theta d^*_{ij}-D^{\textsf{SP-2}}_{ij}(\theta)|]\\
		&\leq \Ex[|X^{\textsf{SP-2}}_{ij}(\theta)-\Ex[X^{\textsf{SP-2}}_{ij}(\theta)]|] +\Ex[|D^{\textsf{SP-2}}_{ij}(\theta)- \theta d^*_{ij}|] \\
		&\leq \sqrt{\Ex[(X^{\textsf{SP-2}}_{ij}(\theta)- \Ex[X^{\textsf{SP-2}}_{ij}(\theta)])^2}] + \sqrt{\Ex[(D^{\textsf{SP-2}}_{ij}(\theta)-\theta d^*_{ij})^2]}\\
		&= \sqrt{Var[X^{\textsf{SP-2}}_{ij}(\theta)]}+ \sqrt{Var[D^{\textsf{SP-2}}_{ij}(\theta)]}\\
		&= \sqrt{\Ex[S^{\textsf{SP-2}}_{i}(\theta)]x^*_{ij}(1-x^*_{ij})} +\sqrt{\theta d^*_{ij}}\\
		&\leq \sqrt{\theta \bar{\lambda}^*_i x^*_{ij}(1-x^*_{ij})} +\sqrt{\theta d^*_{ij}}.
	\end{align*}
	A few remarks are in order. The first inequality follows from $\Ex[X^{\textsf{SP-2}}_{ij}(\theta)]=\Ex[S^{\textsf{SP-2}}_i(\theta)]x^*_{ij}$, and Eq~\eqref{ineq: S_i less than theta lambda bar}  and Eq~\eqref{eq: lambda bar x less than d}: 
	\[
	\Ex[X^{\textsf{SP-2}}_{ij}(\theta)] = \Ex[S^{\textsf{SP-2}}_i(\theta)]x^*_{ij}\le \theta \bar{\lambda}^*_ix^*_{ij}\le \theta d^*_{ij}.
	\]
	The third inequality follows from the triangle inequality, and the fourth inequality holds by the Cauchy-Schwartz inequality. The last equality holds by the variance of random variable that follow the multinomial distribution and the Poisson distribution. Therefore, $\Ex[X^{\textsf{SP-2}}_{ij}(\theta)] - \Ex[Y^{\textsf{SP-2}}_{ij}(\theta)]\le O(\sqrt{\theta})$. \Halmos \end{proof}

\vspace{0.1in}

Next, we show an upper bound on the difference between the expected number of carriers in steady state under the \textsf{SP-2} mechanism and that in the fluid system.

\begin{lemma} \label{lemma: (lambda-S)_bound}
	Given a problem instance with scaling factor $\theta$, we have
	\[
	\theta \bar{\lambda}^*_i - \Ex[S^{\textsf{SP-2}}_i(\theta)]\le O(\sqrt{\theta}), \quad \forall i \in \mathcal{N},
	\]
	where $\Ex[S^{\textsf{SP-2}}_i(\theta)]$ is the expected number of carriers in node $i$ in a state of the stationary distribution under the \textsf{SP-2} mechanism.
\end{lemma}

\begin{proof}{Proof of Lemma \ref{lemma: (lambda-S)_bound}.}
	By Lemma \ref{lemma: necessary conditions for any stationary mechanism}, the \textsf{SP-2} mechanism satisfies
	\[
	\sum_{i\in\delta^{-}(j)}q_{ij}\Ex[Y^{\textsf{SP-2}}_{ij}(\theta)]+\theta\lambda_j=\sum_{k\in \delta^+(j)}\Ex[Y^{\textsf{SP-2}}_{jk}(\theta)] + \Ex[V^{\textsf{SP-2}}_{j}(\theta)]=\Ex[S^{\textsf{SP-2}}_j(\theta)], \quad \forall j \in \mathcal{N}.
	\] 
	Let $y^{\textsf{SP-2}}_{ij}(\theta) = \Ex[{Y}^{\textsf{SP-2}}_{ij}(\theta)]/\Ex[S^{\textsf{SP-2}}_i(\theta)]$ and $\bar{\lambda}^{\textsf{SP-2}}_i(\theta) = \Ex[S^{\textsf{SP-2}}_i(\theta)]/\theta$. Then, we have
	\[
	\sum_{i\in \delta^{-}(j)} q_{ij}y_{ij}^{\textsf{SP-2}}(\theta)\bar{\lambda}^{\textsf{SP-2}}_i(\theta)+\lambda_j = \bar{\lambda}^{\textsf{SP-2}}_j(\theta),  \quad \forall j \in \mathcal{N}.
	\]
	Consider a deterministic system with a $\theta$-scaled problem instance. Let $S_{it}(\theta)$ denote the number of carriers in this deterministic system at node $i$ in period $t$. Given any initial state $S_{i0}(\theta)$, the system dynamics of this deterministic system are given by
	\begin{equation} \label{eq: deterministic system dynamic}
		S_{jt+1}(\theta) = \sum_{i\in \delta^{-}(j)} q_{ij}x^*_{ij}S_{it}(\theta)+\theta\lambda_j, \quad \forall j \in \mathcal{N}.
	\end{equation}
	\noindent Recall that the optimal solution of the \textsf{FA} satisfies the following equations:
	\[
	\bar{\lambda}^*_j = \sum_{i\in \delta^{-}(j)} q_{ij}x^*_{ij}\bar{\lambda}^*_i+\lambda_j, \quad \forall j \in \mathcal{N}.
	\]
	Comparing the above two equations, we observe that with the initial state given as $S_{i0}(\theta)=\theta\bar{\lambda}^*_i$, $\lim_{t \rightarrow \infty} S_{jt}(\theta) = \theta \bar{\lambda}^*_j$. Since the initial state does not affect the steady state of the system, we have $\lim_{t \rightarrow \infty} S_{jt}(\theta) = \theta \bar{\lambda}^*_j$ for each $j \in \mathcal{N}$. 
	
	
	
	Consider the deterministic system with initial state $S_{i0}(\theta)=\theta\bar{\lambda}^{\textsf{SP-2}}_i(\theta)$. Let $\Delta_{jt}(\theta) := S_{jt}(\theta)-\theta\bar{\lambda}^{\textsf{SP-2}}_j(\theta)$ and $\eta_j(\theta) := \sum_{i\in\delta^-(j)}q_{ij}(x_{ij}^*-y^{\textsf{SP-2}}_{ij}(\theta))\theta\bar{\lambda}^{\textsf{SP-2}}_i(\theta)$. Then, by Eq~\eqref{eq: deterministic system dynamic}, $S_{j1}(\theta)$ is given by
	\begin{align*}
		S_{j1}(\theta) &= \sum_{i\in\delta^-(j)}q_{ij}x^*_{ij}\theta\bar{\lambda}^{\textsf{SP-2}}_i(\theta) + \theta\lambda_j\\
		&= \sum_{i\in\delta^-(j)}q_{ij}(x_{ij}^* + y^{\textsf{SP-2}}_{ij}(\theta)-y^{\textsf{SP-2}}_{ij}(\theta))\theta\bar{\lambda}^{\textsf{SP-2}}_i(\theta) + \theta\lambda_j\\
		&= \sum_{i\in\delta^-(j)}q_{ij}y^{\textsf{SP-2}}_{ij}(\theta)\theta\bar{\lambda}^{\textsf{SP-2}}_i(\theta) + \theta \lambda_j + \sum_{i\in\delta^-(j)}q_{ij}(x_{ij}^*-y^{\textsf{SP-2}}_{ij}(\theta))\theta\bar{\lambda}^{\textsf{SP-2}}_i(\theta) \\
		&= \theta\bar{\lambda}^{\textsf{SP-2}}_j(\theta) + \eta_j(\theta),
	\end{align*}
	where the last equation holds by $\sum_{i\in \delta^{-}(j)} q_{ij}y_{ij}^{\textsf{SP-2}}(\theta)\bar{\lambda}^{\textsf{SP-2}}_i(\theta)+\lambda_j = \bar{\lambda}^{\textsf{SP-2}}_j(\theta).$
	By the system dynamics Eq~\eqref{eq: deterministic system dynamic}, $S_{j2}(\theta)$ can be expressed as
	\begin{align*}
		S_{j2}(\theta) &= \sum_{i\in\delta^-(j)}q_{ij}(x_{ij}^*+y^{\textsf{SP-2}}_{ij}(\theta)-y^{\textsf{SP-2}}_{ij}(\theta))S_{i1}(\theta) + \theta \lambda_j\\
		&= \sum_{i\in\delta^-(j)}q_{ij}y^{\textsf{SP-2}}_{ij}(\theta)S_{i1}(\theta) + \theta \lambda_j + \sum_{i\in\delta^-(j)}q_{ij}(x_{ij}^*-y^{\textsf{SP-2}}_{ij}(\theta))S_{i1}(\theta) \\
		&= \sum_{i\in\delta^-(j)}q_{ij}y^{\textsf{SP-2}}_{ij}(\theta)(\theta\bar{\lambda}^{\textsf{SP-2}}_i(\theta) + \Delta_{i1}(\theta)) + \theta \lambda_j + \sum_{i\in\delta^-(j)}q_{ij}(x_{ij}^*-y^{\textsf{SP-2}}_{ij}(\theta))(\theta\bar{\lambda}^{\textsf{SP-2}}_i(\theta) + \Delta_{i1}(\theta)) \\
		&= \theta\bar{\lambda}^{\textsf{SP-2}}_j(\theta) + \sum_{i\in\delta^-(j)}q_{ij}y^{\textsf{SP-2}}_{ij}(\theta)\Delta_{i1}(\theta) + \sum_{i\in\delta^-(j)}q_{ij}(x_{ij}^*-y^{\textsf{SP-2}}_{ij}(\theta))(\theta\bar{\lambda}^{\textsf{SP-2}}_i(\theta) + \Delta_{i1}(\theta)) \\
		&= \theta\bar{\lambda}^{\textsf{SP-2}}_j(\theta) + \sum_{i\in\delta^-(j)}q_{ij}(y^{\textsf{SP-2}}_{ij}(\theta)+(x_{ij}^*-y^{\textsf{SP-2}}_{ij}(\theta)))\Delta_{i1}(\theta)+ \sum_{i\in\delta^-(j)}q_{ij}(x_{ij}^*-y^{\textsf{SP-2}}_{ij}(\theta))\theta\bar{\lambda}^{\textsf{SP-2}}_i(\theta)\\
		&= \theta\bar{\lambda}^{\textsf{SP-2}}_j(\theta) + \sum_{i\in\delta^-(j)}q_{ij}x^*_{ij}\Delta_{i1}(\theta)+ \eta_j(\theta),
	\end{align*}
	where the fourth equation holds by $\sum_{i\in\delta^-(j)}q_{ij}y^{\textsf{SP-2}}_{ij}(\theta)\bar{\lambda}^{\textsf{SP-2}}_i(\theta) +  \lambda_j =\bar{\lambda}^{\textsf{SP-2}}_j(\theta)$. From the last equation above, we have
	\[
	S_{j2}(\theta) - \theta\bar{\lambda}^{\textsf{SP-2}}_j(\theta) = \Delta_{j2}(\theta) = \sum_{i\in\delta^-(j)}q_{ij}x^*_{ij}\Delta_{i1}(\theta)+ \eta_j(\theta).
	\] 
	It is easy to see that the above relationship between $\Delta_{i2}(\theta)$ and $\Delta_{i1}(\theta)$ can be generalized to
	\[
	\Delta_{jt}(\theta) = \sum_{i\in\delta^-(j)}q_{ij}x^*_{ij}\Delta_{it-1}(\theta)+ \eta_j(\theta),\quad \forall t>1.
	\] 
	Recall that we have shown $\Ex[S^{\textsf{SP-2}}_j(\theta)]\le \theta \bar{\lambda}^*_j$ in the proof of Lemma \ref{(X-Y)_bound}. Then, we have 
	$\lim_{t\rightarrow \infty}\Delta_{jt}(\theta) = \theta \bar{\lambda}^*_j - \theta\bar{\lambda}^{\textsf{SP-2}}_j(\theta) = \theta \bar{\lambda}^*_j - \Ex[S^{\textsf{SP-2}}_j(\theta)] <\infty$. In addition, $\eta_j(\theta)$ is bounded by $O(\sqrt{\theta})$ since 
	\[
	\theta\bar{\lambda}^{\textsf{SP-2}}_i(\theta)(x^*_{ij} - y^{\textsf{SP-2}}_{ij}(\theta)) = \Ex[X^{\textsf{SP-2}}_{ij}(\theta)] - \Ex[Y^{\textsf{SP-2}}_{ij}(\theta)]\le O(\sqrt{\theta})
	\]
	by Lemma \ref{(X-Y)_bound}. Intuitively, $\Delta_{jt}(\theta)$ can be considered as a linear function of $\boldsymbol{\eta(\theta)}$ with non-negative coefficients, and these coefficients converge to certain real numbers because $\lim_{t\rightarrow \infty}\Delta_{jt}(\theta)$ is bounded.
	We note that these coefficients depend on $\mathbf{x}^*$ and $\mathbf{q}$, but independent of the \textsf{SP-2} mechanism and $\theta$. It then follows that
	\[
	\lim_{t\rightarrow \infty}\Delta_{jt}(\theta) = \theta \bar{\lambda}^*_j - \theta\bar{\lambda}^{\textsf{SP-2}}_j(\theta) = \theta \bar{\lambda}^*_j - \Ex[S^{\textsf{SP-2}}_j(\theta)]\le O(\sqrt{\theta}),
	\]
	which completes the proof.
	\Halmos \end{proof}

With Lemmas \ref{lemma: gamma_SP-2 <= gamma_SP}, \ref{(X-Y)_bound}, and \ref{lemma: (lambda-S)_bound}, we are now ready to complete the proof of Theorem \ref{theorem: SI_bound}.

\begin{proof}{Proof of Theorem \ref{theorem: SI_bound}.}
	Given a problem instance with scaling parameter $\theta$, the long-run average profit of the platform under the \textsf{SP-2} mechanism, $\gamma^{\textsf{SP-2}}(\theta)$, is given by
	\begin{align*}
		\gamma^{\textsf{SP-2}}(\theta) &=  \Ex \left[ \sum_{(i,j)\in \mathcal{E}} \theta r^*_{ij}d_{ij}(r^*_{ij}) - b_{ij}(D^{\textsf{SP-2}}_{ij}(\theta)-Y^{\textsf{SP-2}}_{ij}(\theta)) - p^*_{ij} Y^{\textsf{SP-2}}_{ij}(\theta)\right]\\
		&= \Ex \left[ \sum_{(i,j)\in \mathcal{E}} \theta r^*_{ij}d_{ij}(r^*_{ij}) - b_{ij}(D^{\textsf{SP-2}}_{ij}(\theta)-X^{\textsf{SP-2}}_{ij}(\theta)) - b_{ij}(X^{\textsf{SP-2}}_{ij}(\theta)-D^{\textsf{SP-2}}_{ij}(\theta))^+ - p^*_{ij} Y^{\textsf{SP-2}}_{ij}(\theta)\right]\\
		&\geq \Ex \left[ \sum_{(i,j)\in \mathcal{E}}\theta r^*_{ij}d_{ij}(r^*_{ij}) - b_{ij}(D^{\textsf{SP-2}}_{ij}(\theta)-X^{\textsf{SP-2}}_{ij}(\theta)) - b_{ij}(X^{\textsf{SP-2}}_{ij}(\theta)-D^{\textsf{SP-2}}_{ij}(\theta))^+ - p^*_{ij} X^{\textsf{SP-2}}_{ij}(\theta) \right]\nonumber\\
		&= \Ex \left [\sum_{(i,j)\in \mathcal{E}} \theta r^*_{ij}d_{ij}(r^*_{ij}) - b_{ij}D^{\textsf{SP-2}}_{ij}(\theta) + (b_{ij}-p^*_{ij})X^{\textsf{SP-2}}_{ij}(\theta)\right] -\Ex \left[ \sum_{(i,j)\in \mathcal{E}}  b_{ij}(X^{\textsf{SP-2}}_{ij}(\theta)-D^{\textsf{SP-2}}_{ij}(\theta))^+ \right],
	\end{align*}
	where the first inequality follows from the definition of $Y^{\textsf{SP-2}}_{ij}(\theta)$. The second term in the last equation is bounded by $O(\sqrt{\theta})$ as shown in Lemma \ref{(X-Y)_bound}. Consider the first term in the last equation:
	\begin{align*}
		&\Ex \left [\sum_{(i,j)\in \mathcal{E}} \theta r^*_{ij}d_{ij}(r^*_{ij}) - b_{ij}D^{\textsf{SP-2}}_{ij}(\theta) + (b_{ij}-p^*_{ij})X^{\textsf{SP-2}}_{ij}(\theta)\right]\\
		&=\sum_{(i,j)\in \mathcal{E}} \theta r^*_{ij}d_{ij}(r^*_{ij}) - \theta b_{ij} d^*_{ij} + (b_{ij}- p^*_{ij}) \Ex[S^{\textsf{SP-2}}_{i}(\theta)]x^*_{ij}\\
		&=\sum_{(i,j)\in \mathcal{E}} \theta r^*_{ij}d_{ij}(r^*_{ij}) - \theta b_{ij} d^*_{ij} + (b_{ij}- p^*_{ij}) (\theta \bar{\lambda}^*_i-\theta \bar{\lambda}^*_i+\Ex[S^{\textsf{SP-2}}_{i}(\theta)])x^*_{ij}\\
		&=\sum_{(i,j)\in \mathcal{E}} \theta r^*_{ij}d_{ij}(r^*_{ij}) 
		- p^*_{ij} \cdot \theta \bar{\lambda}^*_i x^*_{ij} - b_{ij} \cdot \theta (d^*_{ij} - \bar{\lambda}^*_i x^*_{ij}) + (b_{ij}- p^*_{ij})x^*_{ij}( -\theta \bar{\lambda}^*_i+\Ex[S^{\textsf{SP-2}}_{i}(\theta)]) \\
		&= \gamma^{\textsf{FA}}(\theta) -\sum_{(i,j)\in \mathcal{E}}(b_{ij}- p^*_{ij})x^*_{ij}(\theta \bar{\lambda}^*_i-\Ex[S^{\textsf{SP-2}}_{i}(\theta)]).
	\end{align*}
	By Lemma \ref{lemma: (lambda-S)_bound}, $\theta \bar{\lambda}^*_i-\Ex[S^{\textsf{SP-2}}_{i}(\theta)]$ is bounded from above by $O(\sqrt{\theta})$. Therefore, $\gamma^{\textsf{FA}}(\theta) - \gamma^{\textsf{SP-2}}(\theta)$ is bounded from above by $O(\sqrt{\theta})$. It then follows that
	\[
	\gamma^{\textsf{FA}}(\theta) - \gamma^{\textsf{SP}}(\theta) \le \gamma^{\textsf{FA}}(\theta) - \gamma^{\textsf{SP-2}}(\theta) \le O(\sqrt{\theta}), 
	\]
	where the first inequality follows from Lemma \ref{lemma: gamma_SP-2 <= gamma_SP}, and this completes our proof.
	\Halmos \end{proof}

\subsection*{Proofs in Section \ref{section: hybrid mechanisms}}
\rr{The \textsf{HYB} mechanism is a hybrid of \textsf{SP} and \textsf{AUC}. Before proving Lemma \ref{lemma: HYB is IC and IR}, we first show that \textsf{AUC} is IC and IR on each lane.}

\begin{lemma}\label{lemma: VCG is IC and IR}
	\rr{The uniform price auction \textsf{AUC} is IC and IR.}
\end{lemma}
\begin{proof}{Proof of Lemma \ref{lemma: VCG is IC and IR}.}
	Consider the lane $(i,j)$ in time period $t$. Since the uniform price auction \textsf{AUC} is applied to each lane separately, we will remove the indices $i$, $j$, and $t$ to simplify the notation. Let $(\mathbf{C}^{-s},{c}^s)$ be the opportunity cost vector submitted by the carriers, where we assume that all carriers other than carrier $s$ submit their bids truthfully. Now consider the payoff of the carrier $s$ under the uniform price auction \textsf{AUC}:
	\begin{align*}
		&P^s(\mathbf{C}^{-s},{c}^s) - {C}^s  {A}^{s*}(\mathbf{C}^{-s}, {c}^s) \\
		= &\, {c}^s  {A}^{s*}(\mathbf{C}^{-s}, {c}^s) + \mathcal{J}(\mathbf{C}^{-s}) - \mathcal{J}(\mathbf{C}^{-s}, {c}^s)- {C}^s  {A}^{s*}(\mathbf{C}^{-s}, {c}^s)\\
		= &\, \mathcal{J}(\mathbf{C}^{-s}) - \mathcal{J}(\mathbf{C}^{-s}, {c}^s)+( {c}^s - {C}^s)  {A}^{s*}(\mathbf{C}^{-s}, {c}^s) \\
		= &\, \mathcal{J}(\mathbf{C}^{-s}) -\sum_{s'\in \mathcal{S}\setminus\{s\}} {C}^{s'}  {A}^{s'*}(\mathbf{C}^{-s}, {c}^s) - {\xi}^*  {Y}^{0*}(\mathbf{C}^{-s}, {c}^s)\\
		&\qquad - {c}^{s}  {A}^{s*}(\mathbf{C}^{-s}, {c}^s) +( {c}^s - {C}^s)  {A}^{s*}(\mathbf{C}^{-s}, {c}^s)\\
		= &\, \mathcal{J}(\mathbf{C}^{-s}) -\sum_{s'\in \mathcal{S}} {C}^{s'}  {A}^{s'*}(\mathbf{C}^{-s}, {c}^s) - {\xi}^*  {Y}^{0*}(\mathbf{C}^{-s}, {c}^s)\\
		\le &\, \mathcal{J}(\mathbf{C}^{-s}) -\mathcal{J}( \mathbf{C})\\
		= &\, P^s( \mathbf{C}) - {C}^s  {A}^{s*}( \mathbf{C}).
	\end{align*}
	Therefore, it is optimal for the carrier $s$ to bid truthfully, so the uniform price auction is incentive compatible. In addition, we have $\mathcal{J}(\mathbf{C}^{-s}) \ge \mathcal{J}( \mathbf{C})$. This is because a feasible solution to problem $\mathcal{J}( \mathbf{C})$ can be constructed from the optimal solution to $\mathcal{J}(\mathbf{C}^{-s})$ by adding additional relevant variables associated with the carrier $s$ and restricting the value of these variables equal to zero, and therefore the optimal objective value of $\mathcal{J}( \mathbf{C})$ is no more than that of $\mathcal{J}(\mathbf{C}^{-s})$. As a result, the uniform price auction is also individual rational.
	\Halmos \end{proof}

\vspace{0.1in}

\begin{proof}{Proof of Lemma \ref{lemma: HYB is IC and IR}.}
	Suppose carriers report their opportunity costs $\mathbf{C}_{ijt}$ to the platform. For simplicity, indices $i$, $j$, and $t$ are dropped. We first show the \textsf{HYB} mechanism is IR. If $X^{\textsf{SP}}(\mathbf{C})>D$, then the payments and the load allocations follow  \textsf{SP} which is IR. Otherwise, the payments and the load allocations follow \textsf{AUC}, which is IR. Therefore, the \textsf{HYB} mechanism is also IR.
	
	We next show the \textsf{HYB} mechanism is IC. Suppose the bid vector submitted by the carriers on a lane in period $t$ is $(\mathbf{C}^{-s},{c}^s)$. We will show the following inequality holds by considering three cases:
	\begin{align*}
		P^{s,\textsf{HYB}}(\mathbf{C}^{-s},{c}^s) - {C}^s A^{s,\textsf{HYB}}(\mathbf{C}^{-s},{c}^s)\le P^{s,\textsf{HYB}}(\mathbf{C}) -{C}^s A^{s,\textsf{HYB}}(\mathbf{C}).
	\end{align*}
	
	\textbf{\underline{Case 1}.} If $A^{s,\textsf{HYB}}(\mathbf{C}^{-s},{c}^s)=0$, then we have $P^{s,\textsf{HYB}}(\mathbf{C}^{-s},{c}^s)=0$ because both \textsf{SP} and \textsf{AUC} do not make any payment to carriers who do not receive a load allocation. Then, we have
	\[ 
	P^{s,\textsf{HYB}}(\mathbf{C}^{-s},{c}^s) - {C}^s A^{s,\textsf{HYB}}(\mathbf{C}^{-s},{c}^s) = 0 \le P^{s,\textsf{HYB}}(\mathbf{C}) -{C}^s A^{s,\textsf{HYB}}(\mathbf{C}).
	\]
	The above inequality holds because the \textsf{HYB} mechanism is IR.
	
	\textbf{\underline{Case 2}.} If $A^{s,\textsf{HYB}}(\mathbf{C}^{-s},{c}^s)=1$ and $X^{\textsf{SP}}(\mathbf{C}^{-s},{c}^s)>D$, then in this case, we have $P^{s,\textsf{HYB}}(\mathbf{C}^{-s},{c}^s)=p^*$ and $A^{s,\textsf{SP}}(\mathbf{C}^{-s},{c}^s)=A^{s,\textsf{HYB}}(\mathbf{C}^{-s},{c}^s)=1$. If $C^s<p^*$, then $A^{s,\textsf{SP}}(\mathbf{C})=1$ because the outcome of \textsf{SP} is not affected by changing opportunity costs less than $p^*$. In addition, $C^s<p^*$ together with $A^{s,\textsf{SP}}(\mathbf{C})=1$ imply that $A^{s,\textsf{HYB}}(\mathbf{C})=1$ because $X^{\textsf{SP}}(\mathbf{C})= X^{\textsf{SP}}(\mathbf{C}^{-s},{c}^s)>D$. It then results in $P^{s,\textsf{HYB}}(\mathbf{C})\ge p^*$. Otherwise if $C^s \ge p^*$, then we have 
	\[
	P^{s,\textsf{HYB}}(\mathbf{C}^{-s},{c}^s) - {C}^s A^{s,\textsf{HYB}}(\mathbf{C}^{-s},{c}^s) = p^*-C^s\le 0.
	\]
	It then follows that
	\[ 
	P^{s,\textsf{HYB}}(\mathbf{C}^{-s},{c}^s) - {C}^s A^{s,\textsf{HYB}}(\mathbf{C}^{-s},{c}^s)\le P^{s,\textsf{HYB}}(\mathbf{C}) - {C}^s A^{s,\textsf{HYB}}(\mathbf{C}).
	\]

	\textbf{\underline{Case 3}.} If $A^{s,\textsf{HYB}}(\mathbf{C}^{-s},{c}^s)=1$ and $X^{\textsf{SP}}(\mathbf{C}^{-s},{c}^s)\le D$, then in this case, we have $P^{s,\textsf{HYB}}(\mathbf{C}^{-s},{c}^s)=P^{s,\textsf{AUC}}(\mathbf{C}^{-s},{c}^s)$ and $A^{s,\textsf{AUC}}(\mathbf{C}^{-s},{c}^s)=A^{s,\textsf{HYB}}(\mathbf{C}^{-s},{c}^s)=1$. If $C^s< P^{s,\textsf{AUC}}(\mathbf{C}^{-s},{c}^s)$, then $P^{s,\textsf{AUC}}(\mathbf{C}^{-s},{c}^s)=P^{s,\textsf{AUC}}(\mathbf{C})$ because the payment of \textsf{AUC} is not affected by changing an opportunity cost less than the payment. Otherwise if $C^s \ge P^{s,\textsf{AUC}}(\mathbf{C}^{-s},{c}^s)$, then we have 
	\[P^{s,\textsf{HYB}}(\mathbf{C}^{-s},{c}^s) - {C}^s A^{s,\textsf{HYB}}(\mathbf{C}^{-s},{c}^s)\le 0.\]
	It then follows that 
	\[ 
	P^{s,\textsf{HYB}}(\mathbf{C}^{-s},{c}^s) - {C}^s A^{s,\textsf{HYB}}(\mathbf{C}^{-s},{c}^s)\le P^{s,\textsf{HYB}}(\mathbf{C}) - {C}^s A^{s,\textsf{HYB}}(\mathbf{C}).
	\]

	By combining the above three cases, we conclude that the \textsf{HYB} mechanism is IC and IR.
	\Halmos
\end{proof}

To prove Theorem \ref{theorem: asymptotically optimal HYB auction}, we establish another lemma below.
Consider the platform's one-period objective function Eq~\eqref{Ex profit under policy pi} for a given state:
\begin{equation*}
	\Ex \left[\sum_{(i, j)\in \mathcal{E}} r^*_{ij}d_{ij}(r^*_{ij}) - b_{ij}(D_{ijt}-Y_{ijt})- P_{ijt} \;\middle|\; \mathbf{S}_t,\mathbf{D}_t \right].
\end{equation*}
For notation convenience, we define $\rho_{ijt}$ and $\mathcal{H}_{ijt}$ for each lane $(i,j)\in\mathcal{E}$ as follows:
\begin{align}
	\rho_{ijt} &:= r^*_{ij}d_{ij}(r^*_{ij}) - b_{ij}D_{ijt}, \label{def:rho_it}\\
	\mathcal{H}_{ijt} &:= b_{ij}Y_{ijt} - P_{ijt}. \label{def: H_it}
\end{align}
We define a single-period single-lane mechanism design problem under a given system steady state $(\mathbf{S}_t,\mathbf{D}_t)$ as follows:
\begin{subequations}\label{single-period problem}
	\begin{align}
		\max_{Y, P}\;& \sum_{(i,j)\in\mathcal{E}}{\Ex}\left[\rho_{ijt}+ \mathcal{H}_{ijt}\middle | \mathbf{S}_t,\mathbf{D}_t\right]\label{single-period problem: obj function}
		\\
		\mbox{s.t. } \;
		&\; Y_{ijt} \leq \tilde{S}_{ijt}, \label{single-period problem: state constraint}\\
		&\;Y_{ijt} \leq D_{ijt}, \label{single-period problem: demand constraint} \\
		& \;Y_{ijt} \ge 0. \nonumber 
	\end{align} \label{formulation: a single-period problem}
\end{subequations}
In the above formulation, constraint \eqref{single-period problem: state constraint} follows from Eq~\eqref{eq: sum Y and V = S} \rr{and $\sum_j \tilde{S}_{ijt}=S_{it}$}, and constraint \eqref{single-period problem: demand constraint} follows from the definition of $Y_{ijt}$. 
We show that \textsf{AUC} achieves the optimal solution to this problem.

\begin{lemma} \label{lemma: auc and decomposed problem}
	The long-run expected profit of any IC and IR mechanism is given by
	\[
	\gamma = \sum_{(i,j)\in\mathcal{E}}{\Ex}\left[{\Ex}\left[\rho_{ijt}+ \mathcal{H}_{ijt}\middle | \mathbf{S}_t,\mathbf{D}_t\right]\right],
	\]
	where the outer expectation is taken over the steady state distribution of $(\mathbf{S}_t,\mathbf{D}_t)$ under the given mechanism. 
	If $\psi_{ij}(p^*_{ij})\leq b_{ij}$, then \textsf{AUC} is an optimal solution to single-period problem \eqref{single-period problem} conditional on any given state $(\mathbf{S}_t,\mathbf{D}_t)$ \rr{and given $\tilde{S}_{ijt}$}.
\end{lemma}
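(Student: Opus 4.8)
The plan is to establish the two assertions separately. For the first, I would begin from the one-period profit \eqref{Ex profit under policy pi} and use that every mechanism under consideration uses the fixed shipper-side price $\mathbf{r}^*$, so $r^*_{ij}d_{ij}(r^*_{ij})$ is a deterministic constant. Substituting the definitions \eqref{def:rho_it}--\eqref{def: H_it} and using that $\rho_{ijt}$ is a deterministic function of the current state (because $D_{ijt}$ is part of the state), one obtains $G^{\pi}_t(\mathbf{S}_t,\mathbf{D}_t)=\sum_{(i,j)\in\mathcal{E}}\Ex[\rho_{ijt}+\mathcal{H}_{ijt}\mid\mathbf{S}_t,\mathbf{D}_t]$. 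By Proposition~\ref{Prop: stationary} the Markov chain induced by the mechanism is positive recurrent, so the long-run average profit $\gamma$ equals the expectation of $G^{\pi}_t$ under the stationary distribution of $(\mathbf{S}_t,\mathbf{D}_t)$; taking that outer expectation yields the stated formula.

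For the second assertion, I would first reduce \eqref{single-period problem} to a pure allocation problem. Since $\rho_{ijt}$ is unaffected by the carrier-side mechanism, maximizing \eqref{single-period problem: obj function} is equivalent to maximizing $\sum_{(i,j)\in\mathcal{E}}\Ex[\mathcal{H}_{ijt}\mid\mathbf{S}_t,\mathbf{D}_t]=\sum_{(i,j)\in\mathcal{E}}\Ex[b_{ij}Y_{ijt}-P_{ijt}\mid\mathbf{S}_t,\mathbf{D}_t]$. Restricting to IC and IR direct mechanisms (by the revelation principle) and applying the revenue-equivalence identity of Proposition~\ref{prop:p = a and psi} lane by lane, I get $\Ex[P_{ijt}\mid\cdot]=\Ex[\sum_{s}A^{s}_{ijt}(\mathbf{C}_{ijt})\,\psi_{ij}(C^{s}_{ijt})\mid\cdot]$ and $\Ex[Y_{ijt}\mid\cdot]=\Ex[\sum_{s}A^{s}_{ijt}(\mathbf{C}_{ijt})\mid\cdot]$, hence $\Ex[\mathcal{H}_{ijt}\mid\cdot]=\Ex[\sum_{s}A^{s}_{ijt}(\mathbf{C}_{ijt})\,(b_{ij}-\psi_{ij}(C^{s}_{ijt}))\mid\cdot]$, where $\cdot$ abbreviates the conditioning on $(\mathbf{S}_t,\mathbf{D}_t)$. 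This objective together with the feasibility constraints $A^{s}_{ijt}\in[0,1]$ and $\sum_{s}A^{s}_{ijt}\le D_{ijt}$ (the bound $\le S_{ijt}$ being automatic, as there are $S_{ijt}$ carriers each allocated at most one load) is separable across lanes, so it suffices to maximize it lane by lane, and in particular restricting to per-lane mechanisms such as \textsf{AUC} entails no loss of optimality.

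It then remains to solve each per-lane problem pointwise in the bid realization $\mathbf{C}_{ijt}$. Maximizing $\sum_{s}A^{s}(b_{ij}-\psi_{ij}(C^{s}))$ over $A^{s}\in[0,1]$ with $\sum_{s}A^{s}\le D_{ijt}$ is a fractional knapsack whose greedy optimum sets $A^{s}=1$ exactly for the carriers with $b_{ij}-\psi_{ij}(C^{s})>0$, i.e.\ $C^{s}<\psi^{-1}_{ij}(b_{ij})$ (using that $\psi_{ij}$ is strictly increasing), taken in increasing order of $C^{s}$ up to at most $D_{ijt}$ of them. Under the hypothesis $\psi_{ij}(p^{*}_{ij})\le b_{ij}$, equivalently $p^{*}_{ij}\le\psi^{-1}_{ij}(b_{ij})$, the \textsf{AUC} reserve price \eqref{eq: xi^* def in psi} reduces to $\xi^{*}_{ij}=\psi^{-1}_{ij}(b_{ij})$, and the integral optimal solution of the \textsf{AUC} assignment problem \eqref{obj: decomposed auction allocation problem} with this reserve allocates loads to exactly the cheapest $\min\{D_{ijt},\,\#\{s:C^{s}_{ijt}<\psi^{-1}_{ij}(b_{ij})\}\}$ carriers below the reserve (filling the remaining slots with dummy bidders) --- which is precisely the pointwise maximizer. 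Since \textsf{AUC} is IC and IR by Lemma~\ref{lemma: VCG is IC and IR}, the identity above applies to it, so its conditional expected objective equals the lanewise sum of these pointwise maxima, which upper-bounds the conditional expected objective of every IC, IR mechanism; hence \textsf{AUC} solves \eqref{single-period problem} whenever $\psi_{ij}(p^{*}_{ij})\le b_{ij}$ holds on every lane.

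I expect the main obstacle to be the bookkeeping in the reduction step: passing cleanly between the interim allocation probabilities $a^{s}_{ijt}(\cdot)$ appearing in Proposition~\ref{prop:p = a and psi} and the realization-wise allocation $A^{s}_{ijt}(\mathbf{C}_{ijt})$, verifying that the conditional-expectation form of \eqref{single-period problem} is the appropriate benchmark to compare against \textsf{AUC}, and confirming that the relaxed fractional LP is attained by the integral \textsf{AUC} assignment --- ties at the reserve price contribute zero to the objective, so tie-breaking is immaterial and does not affect the argument. The remaining computations are routine.
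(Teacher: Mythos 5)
Your proposal is correct and follows essentially the same route as the paper's proof: the law of total expectation for the decomposition, revenue equivalence (Proposition \ref{prop:p = a and psi}) to rewrite $\Ex[\mathcal{H}_{ijt}\mid\mathbf{S}_t,\mathbf{D}_t]$ as $\sum_s\Ex[(b_{ij}-\psi_{ij}(C^s_{ijt}))A^s_{ijt}(\mathbf{C}_{ijt})\mid\cdot]$, and then a pointwise greedy solution of the resulting per-lane linear program, which coincides with the \textsf{AUC} allocation when $\xi^*_{ij}=\psi^{-1}_{ij}(b_{ij})$. The details you flag as potential obstacles (interim vs.\ ex-post allocation, ties at the reserve) are handled the same way in the paper and pose no difficulty.
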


\begin{proof}{Proof of Lemma \ref{lemma: auc and decomposed problem}.}
	By the law of total expectation, the long-run average profit of any given mechanism is equal to
	\begin{align*}
		\gamma =& \sum_{(i, j)\in \mathcal{E}} \Ex\left[r^*_{ij}d_{ij}(r^*_{ij}) - b_{ij}(D_{ijt}-Y_{ijt})- P_{ijt} \right] \\
		=& \sum_{(i, j)\in \mathcal{E}} \Ex\left[\Ex \left[ r^*_{ij}d_{ij}(r^*_{ij}) - b_{ij}(D_{ijt}-Y_{ijt})- P_{ijt} \;\middle|\; \mathbf{S}_t,\mathbf{D}_t \right]\right],
	\end{align*}
	where the outer expectation is taken over the steady state distribution of $(\mathbf{S}_t,\mathbf{D}_t)$ under the given mechanism. 
	Following the definition of $\rho_{ijt}$ and $\mathcal{H}_{ijt}$, the first part of the lemma is proved.
	
	Next, we prove the second part of the lemma regarding \textsf{AUC}.
	Notice that given a system state $(\mathbf{S}_t,\mathbf{D}_t)$, the term $\Ex\left[ \rho_{ijt}\middle | \mathbf{S}_t,\mathbf{D}_t\right]$ in the objective function \eqref{single-period problem: obj function} is a constant. Therefore, it suffices to focus on the term $\Ex\left[\mathcal{H}_{ijt}\middle | \mathbf{S}_t,\mathbf{D}_t\right]$. For notation simplicity, let $\hat{\Ex}\left[\cdot\right]:=\Ex\left[\cdot\middle | \mathbf{S}_t,\mathbf{D}_t\right]$.
	By the definition of $A_{ijt}$ and the assumption of homogeneous carriers, we have 
	\[
	\sum_{s\in\mathcal{\tilde S}_{ijt}}\hat{\Ex} \left[ A^s_{ijt}(\mathbf{C}_{ijt} )\right] = \sum_{s\in\mathcal{\tilde S}_{ijt}}\hat{\Ex} \left[ a_{ijt}(C^s_{ijt} )\right]= \hat{\Ex}[Y_{ijt}].
	\]
	Because \textsf{AUC} is IC (Lemma~\ref{lemma: VCG is IC and IR}),
	by the definition of $P_{ijt}$ and Proposition \ref{prop:p = a and psi}, we have
	\[
	\hat{\Ex} [P_{ijt}] = \sum_{s\in\mathcal{\tilde S}_{ijt}}\hat{\Ex}\left[ p_{ijt}(C^s_{ijt}) \right] = \sum_{s\in\mathcal{\tilde S}_{ijt}}\hat{\Ex}\left[ a_{ijt}(C^s_{ijt}) \psi_{ij}(C^s_{ijt})\right].
	\]
	Then, $\hat\Ex [\mathcal{H}_{ijt}]$ can be written as 
	\begin{align}
		\hat{\Ex}\left[\mathcal{H}_{ijt}\right] 
		&= \hat\Ex \left[b_{ij}Y_{ijt}- P_{ijt}\right]\nonumber\\ 
		&= \sum_{s\in\mathcal{\tilde S}_{ijt}}b_{ij}\hat{\Ex} \left[ a_{ijt}(C^s_{ijt} )\right] - \sum_{s\in\mathcal{\tilde S}_{ijt}}\hat{\Ex}\left[ a_{ijt}(C^s_{ijt}) \psi_{ij}(C^s_{ijt})\right]\nonumber\\
		&=  \sum_{s\in\mathcal{\tilde S}_{ijt}}\hat{\Ex} \left[ (b_{ij}-\psi_{ij}(C^s_{ijt}))a_{ijt}(C^s_{ijt} )\right]\nonumber\\
		&= \sum_{s\in\mathcal{\tilde S}_{ijt}}\hat{\Ex} \left[ (b_{ij}-\psi_{ij}(C^s_{ijt}))A^s_{ijt}(\mathbf{C}_{ijt})\right]. \label{eq: H_it rewritten}
	\end{align}
	
	\noindent By Eq~\eqref{eq: H_it rewritten} and $Y_{ijt} = \sum_{s\in \mathcal{\tilde S}_{ijt}} A^s_{ijt}(\mathbf{C}_{ijt})$, it is easy to see that the single-period problem \eqref{single-period problem} can be reformulated as follows:
	\begin{subequations}
		\begin{align*}
			\max &\quad \hat{\Ex} \left[\rho_{ijt}+ \sum_{s\in\mathcal{\tilde S}_{ijt}}(b_{ij}-\psi_{ij}(C^s_{ijt}))A^s_{ijt}(\mathbf{C}_{ijt})\right]\\
			\mbox{s.t. } &\quad 0\le {A}_{ijt}^s(\mathbf{C}_{ijt}) \le 1,\qquad \forall s\in \mathcal{\tilde S}_{ijt},\\
			&\quad \sum_{s\in\mathcal{\tilde S}_{ijt}} A^s_{ijt}(\mathbf{C}_{ijt})\le D_{ijt}.
		\end{align*}
	\end{subequations}
	Denote $\mathcal{K}_{ijt}(\mathbf{C}_{ijt})$ as the optimal value of the following problem:
	\begin{align}
		\mathcal{K}_{ijt}(\mathbf{C}_{ijt}):= \max & \sum_{s\in \mathcal{\tilde S}_{ijt}} (b_{ij}-\psi_{ij}(\mathbf{C}_{ijt}^s)) A^s_{ijt}(\mathbf{C}_{ijt}) \label{eq: K_it objective} \\
		\mbox{s.t. } & 0\le {A}_{ijt}^s(\mathbf{C}_{ijt}) \le 1,\qquad \forall s\in \mathcal{\tilde S}_{ijt}, \nonumber \\
		&\sum_{s\in \mathcal{\tilde S}_{ijt}} A^s_{ijt}(\mathbf{C}_{ijt}) \le D_{ijt}. \nonumber 
	\end{align}
	Let $A^{s*}_{ijt}(\mathbf{C}_{ijt})$ be the optimal solution to the above allocation problem $\mathcal{K}_{ijt}(\mathbf{C}_{ijt})$. Recall that $\xi^*_{ij} = \psi^{-1}_{ij}(b_{ij})$ by Eq \eqref{eq: xi^* def in psi} when $\psi_{ij}(p^*_{ij})\leq b_{ij}$. By the monotonicity of $\psi_{ij}$, the optimal solution $A^{s*}_{ijt}(\mathbf{C}_{ijt})$ is to allocate loads to carriers whose opportunity costs are less than $\xi^*_{ij}$ in the increasing order of their opportunity costs. This is exactly the same allocation by \textsf{AUC}. Therefore, the allocation rule of \textsf{AUC} gives the optimal solution to problem $\mathcal{K}_{ijt}(\mathbf{C}_{ijt})$, 
	which completes the proof. \Halmos \end{proof}

\vspace{0.1in}

\begin{proof}{Proof of Theorem \ref{theorem: asymptotically optimal HYB auction}.}
	\rr{We first show} 
	\[
	{\Ex} \left[\rho_{ijt}^{\textsf{SP}}+\mathcal{H}_{ijt}^{\textsf{SP}}\right] \le {\Ex} \left[\rho_{ijt}^{\textsf{HYB}}+\mathcal{H}^{\textsf{HYB}}_{ijt}\right],\quad \forall (i,j)\in\mathcal{E},
	\] 
	where the terms $\rho_{ijt}$ and $\mathcal{H}_{ijt}$ under a given mechanism are defined in Eq~\eqref{def:rho_it} and Eq~\eqref{def: H_it}, respectively. By Lemma \ref{lemma: auc and decomposed problem}, we have
	\begin{align*}
		&\sum_{( i,j)\in\mathcal{E}}\Ex\left[ \rho^{\textsf{SP}}_{ijt}+\mathcal{H}^{\textsf{SP}}_{ijt} \right] = \sum_{(i,j)\in\mathcal{E}}\Ex\left[ r^*_{ij}d_{ij}(r^*_{ij})- b_{ij}(D_{ijt}-Y^{\textsf{SP}}_{ijt}) -P^{\textsf{SP}}_{ijt} \right] = \gamma^{\textsf{SP}}, \\
		&\sum_{( i,j)\in\mathcal{E}}\Ex \left[\rho^{\textsf{HYB}}_{ijt}+\mathcal{H}^{\textsf{HYB}}_{ijt} \right]= \sum_{(i,j)\in\mathcal{E}}\Ex \left[r^*_{ij}d_{ij}(r^*_{ij}) - b_{ij}(D_{ijt}-Y^{\textsf{HYB}}_{ijt}) -P^{\textsf{HYB}}_{ijt} \right] = \gamma^{\textsf{HYB}},
	\end{align*}
	so if we can prove the above inequality, it immediately implies Eq~\eqref{ineq Thm 4: HYB asymptotic optimal}.

	\rr{We make the comparison by coupling methods. We first show that there exists a coupling $(\widetilde{S}^{\textsf{SP}}_{ijt},\widetilde{S}^{\textsf{HYB}}_{ijt})$ that satisfies $\widetilde{S}^{\textsf{SP}}_{ijt}\le \widetilde{S}^{\textsf{HYB}}_{ijt}$. Consider a coupling ($\widetilde{S}^{\textsf{SP}}_{ijt},\widetilde{S}^{\textsf{HYB}}_{ijt}$) for all $(i,j)\in\mathcal{E}$ and for all $t$ on the same probability space. We will show that $\widetilde{S}^{\textsf{SP}}_{ijt}\le \widetilde{S}^{\textsf{HYB}}_{ijt}$ for all $( i,j)\in\mathcal{E}$ and for all $t$ by induction.} 
	
	\rr{\textbf{\underline{Base Case}.} When $t=1$, it is clear that $\widetilde{S}^{\textsf{SP}}_{ijt}= \widetilde{S}^{\textsf{HYB}}_{ijt}=0$ for all $( i,j)\in\mathcal{E}$ as we assume there are no carriers in the marketplace at the beginning of period 1.}
	
	\rr{\textbf{\underline{Induction Step}.} Suppose $\widetilde{S}^{\textsf{SP}}_{ijt}\le \widetilde{S}^{\textsf{AUC}}_{ijt}$ for all $( i,j)\in\mathcal{E}$. Set $\mathbf{D}^{\textsf{SP}}_{t}=\mathbf{D}^{\textsf{HYB}}_{t}$ and $\mathbf{C}_{ijt}^{\textsf{HYB}}=(\mathbf{C}_{ijt}^{\textsf{SP}},\mathbf{C}_{ijt}^{\textsf{DIFF}})$, where $\mathbf{C}_{ijt}^{\textsf{DIFF}}$ is the opportunity cost vector for those $(\widetilde{S}^{\textsf{HYB}}_{ijt}-\widetilde{S}^{\textsf{SP}}_{ijt})$ carriers in the marketplace under \textsf{HYB} but not in the system under \textsf{SP}. Recall that ${X}^{\textsf{HYB}}_{ijt}$ is the number of carriers whose opportunity costs are less than $\xi^*_{ij}$, where $\xi^*_{ij}\ge p^*_{ij}$. Then, we have ${X}^{\textsf{SP}}_{ijt}\le {X}^{\textsf{HYB}}_{ijt}$. It then follows that }
	\[
	{Y}^{\textsf{SP}}_{ijt} = \min\{{X}^{\textsf{SP}}_{ijt},{D}^{\textsf{SP}}_{ijt}\} \le \min\{{X}^{\textsf{HYB}}_{ijt},{D}^{\textsf{HYB}}_{ijt}\} = {Y}^{\textsf{HYB}}_{ijt}.
	\] 
	\rr{We set ${Z}^{\textsf{HYB}}_{ijt}={Z}^{\textsf{SP}}_{ijt}+{Z}^{\textsf{DIFF}}_{ijt}$ and $\Lambda ^{\textsf{SP}}_{it+1}=\Lambda^{\textsf{HYB}}_{it+1}$, where ${Z}^{\textsf{DIFF}}_{ijt}$ follows the Binomial distribution with parameters $({Y}^{\textsf{HYB}}_{ijt} - {Y}^{\textsf{SP}}_{ijt}, {q}_{ij})$.} Then, let    \rr{$${S}^{\textsf{HYB}}_{it+1}=\sum_{k\in\delta^-(i)}{Z}^{\textsf{SP}}_{kit}+{\Lambda} ^{\textsf{SP}}_{it+1}+\sum_{k\in\delta^-(i)}{Z}^{\textsf{DIFF}}_{kit}={S} ^{\textsf{SP}}_{it+1}+\sum_{k\in\delta^-(i)}{Z}^{\textsf{DIFF}}_{kit}={S} ^{\textsf{SP}}_{it+1}+S^{\textsf{DIFF}}_{it}, \forall i\in \mathcal{N},$$}
    \rr{where $S^{\textsf{DIFF}}_{it}=\sum_{k\in\delta^-(i)}{Z}^{\textsf{DIFF}}_{kit}\geq 0$. For any $i\in \mathcal{N}$, set $\mathbf{C}_{ijt+1}^{\textsf{HYB}}=(\mathbf{C}_{ijt+1}^{\textsf{SP}},\mathbf{C}_{ijt+1}^{\textsf{DIFF}})$ for all $j\in\delta^+(i)$, where $\mathbf{C}_{ijt+1}^{\textsf{DIFF}}$ is the opportunity cost vector for $S^{\textsf{DIFF}}_{it}$. Since carriers  choose a lane according to the same MNL choice model based on the same set of post prices under \textsf{HYB} and \textsf{SP}, a carrier who chooses lane $(i,j)$ under \textsf{SP} will still choose the same lane under \textsf{HYB} as long as there are remaining loads on this lane. Along with $S_{it}^{\textsf{HYB}}\geq S_{it}^{\textsf{SP}}$, we have $\widetilde{S}^{\textsf{HYB}}_{ijt+1}\ge \widetilde{S} ^{\textsf{SP}}_{ijt+1}$ for all $(i,j)\in \mathcal{E}$.} This completes the proof for the induction step.
    
    \rr{Now we} compare $\mathcal{H}^{\textsf{SP}}_{ijt}$ and $\mathcal{H}^{\textsf{HYB}}_{ijt}$. Consider a coupling $(\widetilde{\mathcal{H}}^{\textsf{SP}}_{ijt},\widetilde{\mathcal{H}}^{\textsf{HYB}}_{ijt})$ in the same probability space, where $\widetilde{\mathcal{H}}^{\textsf{SP}}_{ijt}\sim {\mathcal{H}}^{\textsf{SP}}_{ijt}$ and $\widetilde{\mathcal{H}}^{\textsf{HYB}}_{ijt}\sim {\mathcal{H}}^{\textsf{HYB}}_{ijt}$. \rr{We have shown that there exists a coupling $(\widetilde{S}^{\textsf{SP}}_{ijt},\widetilde{S}^{\textsf{HYB}}_{ijt})$ that satisfies $\widetilde{S}^{\textsf{SP}}_{ijt}\le \widetilde{S}^{\textsf{HYB}}_{ijt}$.} In this coupling, we set $\mathbf{D}^{\textsf{SP}}_{t}=\mathbf{D}^{{\textsf{HYB}}}_{t}$ and $\mathbf{C}_{ijt}^{\textsf{HYB}}=(\mathbf{C}_{ijt}^{\textsf{SP}},\mathbf{C}_{ijt}^{\textsf{DIFF}})$, where $\mathbf{C}_{ijt}^{\textsf{DIFF}}$ is the opportunity cost vector for those $(\widetilde{S}^{\textsf{HYB}}_{ijt}-\widetilde{S}^{\textsf{SP}}_{ijt})$ carriers in the marketplace under mechanism \textsf{HYB} but not in the marketplace under mechanism \textsf{SP}. If $X^{\textsf{SP}}_{ijt}(\mathbf{C}^{\textsf{HYB}}_{ijt}) > D^{\textsf{HYB}}_{ijt}$, then the \textsf{HYB} operates the same as the \textsf{SP} mechanism. In this case, we have $\widetilde{\mathcal{H}}^{\textsf{SP}}_{ijt}\le \widetilde{\mathcal{H}}^{\textsf{HYB}}_{ijt}$ because $\sum_{s\in\mathcal{S}^{\textsf{SP}}_{ijt}}A^{s,\textsf{SP}}_{ijt}(\mathbf{C}^{\textsf{SP}}_{ijt}) \le \sum_{s\in\mathcal{S}^{\textsf{HYB}}_{ijt}}A^{s,\textsf{HYB}}_{ijt}(\mathbf{C}^{\textsf{HYB}}_{ijt})=D^{\textsf{SP}}_{ijt}=D^{\textsf{HYB}}_{ijt}$ and $p^*_{ij}\le b_{ij}$. Otherwise, the \textsf{HYB} operates the same as the \textsf{AUC} mechanism. By the definition of problem \eqref{eq: K_it objective}, we can construct a feasible solution to $\mathcal{K}_{ijt}(\mathbf{C}^{\textsf{HYB}}_{ijt})$ from the optimal solution to $\mathcal{K}_{ijt}(\mathbf{C}^{\textsf{SP}}_{ijt})$ by adding additional $(\widetilde{S}^{\textsf{HYB}}_{ijt}-\widetilde{S}^{\textsf{SP}}_{ijt})$ variables and restricting the value of these added variables equal to zero. In addition, the objective value under this feasible solution to $\mathcal{K}_{ijt}(\mathbf{C}^{\textsf{HYB}}_{ijt})$ is the same as the optimal objective value of $\mathcal{K}_{ijt}(\mathbf{C}^{\textsf{SP}}_{ijt})$. As a result, the optimal objective value of $\mathcal{K}_{ijt}(\mathbf{C}^{\textsf{SP}}_{ijt})$ is always less than or equal to that of $\mathcal{K}_{ijt}(\mathbf{C}^{\textsf{HYB}}_{ijt})$ under this coupling. It then follows that $\hat{\Ex}[\widetilde{\mathcal{H}}^{\textsf{SP}}_{ijt}]\le \hat{\Ex}[\widetilde{\mathcal{H}}^{\textsf{HYB}}_{ijt}]$.
	Since $\widetilde{\mathcal{H}}^{\textsf{SP}}_{ijt}\sim {\mathcal{H}}^{\textsf{SP}}_{ijt}$ and $\widetilde{\mathcal{H}}^{\textsf{HYB}}_{ijt}\sim {\mathcal{H}}^{\textsf{HYB}}_{ijt}$, it then follows that $\Ex[\mathcal{H}^{\textsf{SP}}_{ijt}]\le \Ex[\mathcal{H}^{\textsf{HYB}}_{ijt}]$ in both cases.
	
	Because the expectation of $\rho_{ijt}$ is not affected by carrier side mechanisms, we have $\Ex[\rho^{\textsf{SP}}_{ijt}] = \Ex[\rho^{\textsf{HYB}}_{ijt}]$.
	By Lemma~\ref{lemma: auc and decomposed problem}, it then follows that 
	\[
	\gamma^{\textsf{SP}} = \sum_{( i,j)\in\mathcal{E}}{\Ex} \left[\rho^{\textsf{SP}}_{ijt}+\mathcal{H}_{ijt}^{\textsf{SP}}\right] \leq \sum_{( i,j)\in\mathcal{E}}{\Ex} \left[\rho^{\textsf{HYB}}_{ijt}+\mathcal{H}^{\textsf{HYB}}_{ijt}\right] = \gamma^{\textsf{HYB}},
	\]
	and this completes the proof.
	\Halmos \end{proof}

	\rr{Next we prove Theorem \ref{theorem: gap between posted price and auction}. To prove the bounds between \textsf{HYB} and \textsf{SP}, we need to analyze the asymptotic property of the number of carriers who haul a load under \textsf{HYB} but would not haul a load under \textsf{SP}. To that end, we first show the following two auxiliary results.}
	\rr{
	\begin{lemma}\label{lem: tail distribution approximation}
		Suppose $U_1,U_2$, and $U_3$ are three independent random variables following Poisson distributions with rate $\theta,\theta$, and $\theta/2$, respectively. Then for any $x>0$, we have
		\begin{align*}
			\left|\Pr\{U_1-U_2\geq x \} -\bar{\Phi}\left(\frac{x}{\sqrt{2\theta}}\right)\right|& \leq  \frac{c_0}{\sqrt{2\theta}} ,\\
			\left|\Pr\{U_3\geq x +\theta/2\}-\bar{\Phi}\left(\frac{\sqrt{2}x}{\sqrt{\theta}}\right) \right|& \leq  \frac{c_0\sqrt{2}}{\sqrt{\theta}},
		\end{align*}
		where $\bar{\Phi}(z)=1-\Phi(z)$ and $\Phi(z)$ is the cumulative distribution function of a standard normal distribution $N(0,1)$, and $c_0$ is a constant.
	\end{lemma}
    }
\begin{proof}{Proof of Lemma \ref{lem: tail distribution approximation}.}
		\rr{
			Let $U_1^{(i)},i\in\{1,2,\cdots,N\}$ be $N$ i.i.d random variables following Poisson distributions with rate $\theta/N$. The additive property of Poisson distribution implies that $U_1=\sum_{i=1}^{N}U_1^{(i)}$. Similarly, we can decompose $U_2$ and $U_3$ by $U_2=\sum_{i=1}^{N}U_2^{(i)}$, $U_3=\sum_{i=1}^{N}U_3^{(i)}$, where $U_2^{(i)}$ and $U_3^{(i)}$ are independent and $U_2^{(i)}\sim \text{Poisson}(\theta/N)$ and $U_3^{(i)}\sim \text{Poisson}(\theta/(2N))$. Therefore, we have
		\begin{align}
			\label{eq:poissonDif}\Pr\{U_1-U_2\geq x \}=&\Pr\left\{\sum_{i=1}^{N}(U_1^{(i)}-U_2^{(i)})\geq x \right\},\\
			\label{eq:poissonest}\Pr\{U_3\geq x +\theta/2\}=&\Pr\left\{\sum_{i=1}^{N}\left[U_3^{(i)}-\frac{\theta}{2N}\right]\geq x  \right\}.
		\end{align}
		We use the Berry-Ess\'een inequality (see \cite{Berry_1941}, \cite{ChenEtal2011}) to facilitate our analysis. For $N$ i.i.d. random variables $W_1,W_2,\cdots, W_N$, if $\Ex[W_i]=0$ and $\Ex[|W_i|^3]<\infty$, then by the Berry-Ess\'een inequality, we have 
		\begin{equation}\label{eq: berry-esseen}
			\left|\Pr\left\{\sum_{i=1}^N W_i\geq z\cdot\sqrt{N\Ex[W_i^2]}\right\}-\bar{\Phi}(z)\right|\leq c_0 \frac{\Ex[|W_i|^3]}{\sqrt{N\Ex[W_i^2]^{3}}}, \quad \forall z \mbox{ and } N
		\end{equation}
		where $c_0$ is a constant.	
	}
	
	\rr{By the property of Poisson distribution's second and third moments, it is easy to verify that
		\begin{align*}
			\Ex \left[U_1^{(i)}-U_2^{(i)} \right]&=0,\quad \Ex\left[ \left(U_1^{(i)}-U_2^{(i)} \right)^2\right]=\frac{2\theta}{N},\\
			\Ex\left[|U_1^{(i)}-U_2^{(i)}|^3 \right]&\leq\Ex \left[ \left(U_1^{(i)}+U_2^{(i)} \right)^3\right]=\left(\frac{2\theta}{N}\right)^3+3\left(\frac{2\theta}{N}\right)^2 +\frac{2\theta}{N}.
		\end{align*}
		Then by letting $W_i=U_1^{(i)}-U_2^{(i)}$ and $z=x/\sqrt{2\theta}$ in Eq~\eqref{eq: berry-esseen}, we have
		\begin{align*}
			&\left|\Pr \left\{\sum_{i=1}^N (U_1^{(i)}-U_2^{(i)})\geq x\right\}-\bar{\Phi}\left(\frac{x}{\sqrt{2\theta}}\right)\right|\leq c_0 \frac{\Ex[|U_1^{(i)}-U_2^{(i)}|^3]}{\sqrt{N(2\theta/N)^{3}}}\\
			&\quad\leq c_0 \frac{\left(\frac{2\theta}{N}\right)^3+3\left(\frac{2\theta}{N}\right)^2 +\frac{2\theta}{N}}{\sqrt{N(2\theta/N)^{3}}}=c_0 \frac{\frac{(2\theta)^2}{N^2}+\frac{6\theta}{N} +1}{\sqrt{2\theta}}
		\end{align*}
		for all $x>0$ and $N\in\mathbb{N^+}$. Taking the limit at $N\rightarrow\infty$ and combined with Eq~\eqref{eq:poissonDif}, we have
		$$\left|\Pr\{U_1-U_2\geq x \} -\bar{\Phi}\left(\frac{x}{\sqrt{2\theta}}\right)\right| \leq  \frac{c_0}{\sqrt{2\theta}},$$
		which proves the first inequality in Lemma \ref{lem: tail distribution approximation}.
	}
	
	\rr{
		Similarly, it's easy to verify that
		\begin{align*}
			\Ex[U_3^{(i)}-\theta/(2N)]&=0,\quad \Ex[(U_3^{(i)}-\theta/(2N))^2]=\frac{\theta}{2N},\\
			\Ex[|U_3^{(i)}-\theta/(2N)|^3]&\leq\Ex[(U_3^{(i)}+\theta/(2N))^3]=8\left(\frac{\theta}{2N}\right)^3+6\left(\frac{\theta}{2N}\right)^2 +\frac{\theta}{2N}.
		\end{align*}
		Then let $W_i=U_3^{(i)}-\theta/(2N)$ and $z=\sqrt{2}x/\sqrt{\theta}$ in Eq~\eqref{eq: berry-esseen}, we have
		\begin{align*}
			&\left|\Pr \left\{\sum_{i=1}^N \left[U_3^{(i)}-\frac{\theta}{2N}\right]\geq x\right\}-\bar{\Phi}\left(\frac{\sqrt{2}x}{\sqrt{\theta}}\right)\right|\leq c_0 \frac{\Ex[|U_3^{(i)}-\theta/(2N)|^3]}{\sqrt{N(\theta/(2N))^{3}}}\\
			&\quad\leq c_0 \frac{8\left(\frac{\theta}{2N}\right)^3+6\left(\frac{\theta}{2N}\right)^2 +\frac{\theta}{2N}}{\sqrt{N(\theta/(2N))^{3}}}=c_0 \frac{\frac{2\theta^2}{N^2}+\frac{3\theta}{N} +1}{\sqrt{\theta/2}}
		\end{align*}
		holds for all $x>0$ and $N\in\mathbb{N^+}$. Taking the limit at $N\rightarrow\infty$ and combined with Eq~\eqref{eq:poissonest}, we have
		$$\left|\Pr\{U_3\geq x +\theta/2\}-\bar{\Phi}\left(\frac{\sqrt{2}x}{\sqrt{\theta}}\right) \right| \leq  \frac{c_0\sqrt{2}}{\sqrt{\theta}},$$
		which proves the second inequality in Lemma \ref{lem: tail distribution approximation}.
	}
	\Halmos \end{proof}
\rr{
	\begin{lemma}\label{lem: expected num of AUC carrier}
		Suppose $U_1,U_2$, and $U_3$ are three independent random variables following Poisson distributions with rate $\theta,\theta$, and $\theta/2$, respectively. We have
		\begin{align*}
			\Ex[\min\{[U_1-U_2]^+,U_3\}] \geq \Omega(\sqrt{\theta}),
		\end{align*}
		where $[x]^+$ equals $x$ if $x>0$ and equals $0$ otherwise.
	\end{lemma}
	\begin{proof}{Proof of Lemma \ref{lem: expected num of AUC carrier}.}
		Let $\lceil x \rceil$ denote the smallest integer such that $\lceil x \rceil \geq x$. By the definition of expectation,
		\begin{align*}
			\Ex&[\min\{[U_1-U_2]^+,U_3\}]  =\sum_{i=1}^{\infty} i \Pr\{\min\{[U_1-U_2]^+,U_3\}=i\}\\
			 =& \sum_{i=1}^{\left\lceil\sqrt{\theta}\right\rceil-1} i \Pr\{\min\{[U_1-U_2]^+,U_3\}=i\}+ \sum_{i=\left\lceil\sqrt{\theta}\right\rceil}^{\infty} i \Pr\{\min\{[U_1-U_2]^+,U_3\}=i\}\\
			\geq & \sum_{i=\left\lceil\sqrt{\theta}\right\rceil}^{\infty} i \Pr\{\min\{[U_1-U_2]^+,U_3\}=i\}
			\geq  \sum_{i=\left\lceil\sqrt{\theta}\right\rceil}^{\infty} \left\lceil\sqrt{\theta}\right\rceil \Pr\{\min\{[U_1-U_2]^+,U_3\}=i\}\\
			= & \left\lceil\sqrt{\theta}\right\rceil \Pr\{\min\{[U_1-U_2]^+,U_3\}\geq \left\lceil\sqrt{\theta}\right\rceil \}
			 = \left\lceil\sqrt{\theta}\right\rceil \Pr\{\min\{[U_1-U_2]^+,U_3\}\geq \sqrt{\theta} \}\\
			 \geq & \sqrt{\theta}\Pr\{\min\{[U_1-U_2]^+,U_3\}\geq \sqrt{\theta} \}
			 = \sqrt{\theta}\Pr\{U_1-U_2 \geq \sqrt{\theta}\} \Pr\{U_3\geq \sqrt{\theta}\}\\
			 \geq& \sqrt{\theta}\Pr\{U_1-U_2 \geq \sqrt{\theta}\} \Pr\{U_3\geq \theta+\sqrt{\theta}\}.
		\end{align*}
	By letting $x=\sqrt{\theta}$ in Lemma \ref{lem: tail distribution approximation}, we have
	\begin{align*}
		\Pr\{U_1-U_2\geq \sqrt{\theta} \} &=\bar{\Phi}\left({1}/{\sqrt{2}}\right)+O(1/\sqrt{\theta}),\\
		\Pr\{U_3\geq \theta+\sqrt{\theta} \} &=\bar{\Phi}\left({\sqrt{2}}\right)+O(1/\sqrt{\theta}).
	\end{align*}
	It then follows that
	\begin{align*}
		\Ex&[\min\{[U_1-U_2]^+,U_3\}] \geq \sqrt{\theta} \bar{\Phi}\left({1}/{\sqrt{2}}\right) \bar{\Phi}\left({\sqrt{2}}\right) + O(1)=\Omega(\sqrt{\theta})
	\end{align*}
	and this completes the proof.
	\Halmos \end{proof}
}

	\begin{proof}{Proof of Theorem \ref{theorem: gap between posted price and auction}.}
		\rr{
			We prove the theorem using the following problem instance. Suppose every node is connected to $k$ destinations, i.e., $|\delta^+(i)|=k$ for all $i\in\mathcal{N}$. Shipper demand $D_{ijt}$ follows a Poisson distribution with mean $\theta$ under the optimal shipper rates. Carriers arrivals $\Lambda_{it}$ follows i.i.d. Poisson distributions with mean $2k\theta$. The price sensitivity parameter for carriers $\beta=1$, and the average costs for carriers $\alpha_{ij}=\alpha$. The probability that carriers stay in the next period $q_{ij}=0$. Suppose the optimal shipper rates are $r^*_{ij}=1$ and the penalty costs are $b_{ij}=b=\alpha-\ln k +4+\ln3$. Firstly, we solve the optimal posted price in the \textsf{FA} model. Under the aforementioned problem instance, the objective function and constraints of  \textsf{FA} in Eq~(\ref{FA linking constraint},\ref{FA demand constraint}) can be rewritten as follows:
			\begin{align}
				(\textsf{FA}): \quad \max_{\bar{\mathbf y},\bar{\mathbf v}}  & \quad 
				\sum_{(i, j)\in \mathcal{E}}\theta- \left[\ln \left (\frac{\bar{y}_{ij}}{\bar{v}_{i}} \right)+\alpha \right] \bar{y}_{ij} - b(\theta-\bar{y}_{ij})\nonumber
				\\
				\mbox{s.t. } \; & 2k\theta = \sum_{j\in \delta^+(i)}\bar{y}_{ij}+\bar{v}_{i},\quad \forall i\in \mathcal{N},\label{eq: balance_append_cons}\\
				&\;\bar{y}_{ij} \leq \theta, \quad  \forall (i,j)\in \mathcal{E},\nonumber \\
				& \;\bar{y}_{ij} \ge 0, \quad  \forall (i,j)\in \mathcal{E} \mbox{ and } \bar{v}_{i} \ge 0, \quad  \forall i\in \mathcal{N}. \nonumber 
			\end{align}
			For any feasible $\bar{v}_i$, Eq~\eqref{eq: balance_append_cons} implies that $\sum_{j\in \delta^+(i)}\bar{y}_{ij}=2k\theta-\bar{v}_i$. Applying the Jensen's inequality to the objective function, the optimal $\bar{y}_{ij}$ at a given $\bar{v}_i$ is given by
			\begin{align}
				\bar{y}^*_{ij}(\bar{v}_i)=\frac{2k\theta-\bar{v}_i}{k}.\label{eq:optimal y on v}
			\end{align}
			Substituting Eq~\eqref{eq:optimal y on v} into the \textsf{FA} problem, we have
				\begin{align}
				(\textsf{FA}): \quad \max_{\bar{\mathbf v}}  & \quad 
				\sum_{i\in \mathcal{N}}k\theta-kb\theta+ \left[b-\ln \left (\frac{2k\theta-\bar{v}_i}{k\bar{v}_{i}} \right)-\alpha \right] ({2k\theta-\bar{v}_i}) \nonumber
				\\
				\mbox{s.t. } \; & k\theta \leq \bar{v}_{i}\leq 2k\theta,\quad \forall i\in \mathcal{N}.\label{eq:optimalVCons}
			\end{align}
			Notice that Eq~\eqref{eq:optimalVCons} implies $1/\bar{v}_i\leq 1/(k\theta)$. Taking the first order derivative of the objective function with respect to $\bar{v}_i$, we have
			\begin{align*}
				{\alpha}-b+\ln\left(\frac{2\theta}{\bar{v}_i}-\frac{1}{k}\right)+\frac{2k\theta}{\bar{v}_i}&\leq {\alpha}-b+\ln\left(\frac{2\theta}{k\theta}-\frac{1}{k}\right)+\frac{2k\theta}{k\theta}\\
				&={\alpha}-b-\ln k+2\\
				&={\alpha}-(\alpha-\ln k +4+\ln3)-\ln k+2\\
				&=-2-\ln 3 <0,
			\end{align*}
			which implies that the objective function is strictly decreasing in $\bar{v}_i$. Therefore, $\bar{v}^*_i=k\theta$,  $\bar{y}^*_{ij}=\theta$ and
			\begin{equation}
				p^*_{ij}=\ln\left(\frac{\bar{y}^*_{ij}}{\bar{v}^*_i}\right)+\alpha=\alpha-\ln k.\label{eq:optimal post price instance}
			\end{equation}
			We assume $k<e^\alpha$ in this instance so that $p^*_{ij}>0$.
		}
		
		\rr{
			Next we calculate the reserve price $\xi^*_{ij}=\max\{p^*_{ij},\psi^{(-1)}_{ij}(b)\}$. By Eq~\eqref{eq: psi-result} and \eqref{eq:optimal post price instance}, we have
			\begin{equation}
				{\psi}_{ij} (C) = C + 1 + ke^{C-\alpha}.\label{eq:psi function instance}
			\end{equation}
			Notice that 
			\begin{align*}
				b &= \alpha-\ln k+4+\ln 3=\alpha-\ln k+\ln 3+1+3\\
				&= \alpha-\ln k+\ln 3+1+k\cdot\frac{3}{k} \\
                    & =\alpha-\ln k+\ln 3+1+k\cdot e^{\ln{3}-\ln{k}}\\
				&= (\alpha-\ln k+\ln 3)+1+k\cdot e^{(\alpha-\ln{k}+\ln{3})-\alpha}.
			\end{align*}
			Therefore $b=\psi_{ij}(C)$ is equivalent to
			$$(\alpha-\ln k+\ln 3)+1+k\cdot e^{(\alpha-\ln{k}+\ln{3})-\alpha}=C + 1 + ke^{C-\alpha},$$
			and it is easy to see that $C=\psi_{ij}^{-1}(b)=\alpha-\ln k+\ln 3>p^*_{ij}$. Therefore, the reserve price is given by
			\begin{equation}
				\xi^*_{ij} = \alpha-\ln k + \ln 3.\label{eq:reserve price instance}
			\end{equation}
		}
		\rr{
			Now we are ready to estimate the bound between \textsf{HYB} and \textsf{SP}. By the proof of Theorem \ref{theorem: asymptotically optimal HYB auction}, we have
			\begin{align}				\gamma^{\textsf{HYB}}-\gamma^{\textsf{SP}} =  \sum_{( i,j)\in\mathcal{E}}{\Ex} \left[\mathcal{H}^{\textsf{HYB}}_{ijt}\right] -{\Ex} \left[\mathcal{H}_{ijt}^{\textsf{SP}}\right]=\sum_{( i,j)\in\mathcal{E}}{\Ex}\left[\mathcal{K}_{ijt}(\mathbf{C}^{\textsf{SP}}_{ijt},\mathbf{C}^{\textsf{DIFF}}_{ijt})-\mathcal{K}_{ijt}(\mathbf{C}^{\textsf{SP}}_{ijt})\right].\label{eq:hyb-sp tight bound}
			\end{align}
			Notice that the probability that carriers stay in the next period $q_{ij}=0$ in this problem instance, which implies that all carriers are new arrivals. Therefore, the coupling $(\mathbf{C}^{\textsf{SP}}_{ijt},\mathbf{C}^{\textsf{DIFF}}_{ijt})\sim \mathbf{C}^{\textsf{SP}}_{ijt}$ implies that all elements in $\mathbf{C}^{\textsf{DIFF}}_{ijt}$ must exceed the posted price (because all the newly arrived carriers with opportunity costs below the posted price must be in $\mathbf{C}^{\textsf{SP}}_{ijt}$). Let $\mathbf{C}^{\textsf{DIFF}}_{ijt}=\left(\mathbf{C}^{\textsf{BELOW}}_{ijt},\mathbf{C}^{\textsf{ABOVE}}_{ijt}\right)$ where each element in $\mathbf{C}^{\textsf{BELOW}}_{ijt}$ is in $(p_{ij}^*,\xi_{ij}^*]$ and each element in $\mathbf{C}^{\textsf{ABOVE}}_{ijt}$ is larger than $\xi_{ij}^*$. In other words, $\mathbf{C}^{\textsf{BELOW}}_{ijt}$ includes the carriers whose opportunity costs are larger than the posted price but smaller than the reserve price (and hence they will choose to participate in the auction). Let $U^{\textsf{SP}}_{ijt}$, $U^{\textsf{ABOVE}}_{ijt}$ and $U^{\textsf{BELOW}}_{ijt}$ denote the number of elements in the vectors $\mathbf{C}^{\textsf{SP}}_{ijt}$, $\mathbf{C}^{\textsf{ABOVE}}_{ijt}$ and $\mathbf{C}^{\textsf{BELOW}}_{ijt}$, respectively. By the definition of $\mathcal{K}$ in problem \eqref{eq: K_it objective}, we know that
			$$
            \mathcal{K}_{ijt}(\mathbf{C}^{\textsf{SP}}_{ijt},\mathbf{C}^{\textsf{BELOW}}_{ijt},\mathbf{C}^{\textsf{ABOVE}}_{ijt})-\mathcal{K}_{ijt}(\mathbf{C}^{\textsf{SP}}_{ijt})>0
            $$
			if $U^{\textsf{SP}}_{ijt} < D_{ijt}$ and $U^{\textsf{BELOW}}_{ijt} > 0$ (that is, when the number of  carriers whose opportunity costs are less than the posted price is not enough to cover the demand while there exists at least one  carrier who would join \textsf{AUC}), and 
			$$
            \mathcal{K}_{ijt}(\mathbf{C}^{\textsf{SP}}_{ijt},\mathbf{C}^{\textsf{BELOW}}_{ijt},\mathbf{C}^{\textsf{ABOVE}}_{ijt})-\mathcal{K}_{ijt}(\mathbf{C}^{\textsf{SP}}_{ijt})=0
            $$
			otherwise. Moreover, if $U^{\textsf{SP}}_{ijt} < D_{ijt}$ and $U^{\textsf{BELOW}}_{ijt} > 0$, we can construct a feasible solution to $\mathcal{K}_{ijt}(\mathbf{C}^{\textsf{SP}}_{ijt},\mathbf{C}^{\textsf{BELOW}}_{ijt},\mathbf{C}^{\textsf{ABOVE}}_{ijt})$ by adding additional $(U^{\textsf{ABOVE}}_{ijt}+U^{\textsf{BELOW}}_{ijt})$ variables and restricting  $\min\{D_{ijt}-U^{\textsf{SP}}_{ijt},U^{\textsf{BELOW}}_{ijt}\}$ variables in $\mathbf{C}^{\textsf{BELOW}}_{ijt}$ to be equal to 1 and restricting the remaining variables to be equal to 0. Therefore, we have
			$$\mathcal{K}_{ijt}(\mathbf{C}^{\textsf{SP}}_{ijt},\mathbf{C}^{\textsf{BELOW}}_{ijt},\mathbf{C}^{\textsf{ABOVE}}_{ijt})-\mathcal{K}_{ijt}(\mathbf{C}^{\textsf{SP}}_{ijt})\geq\sum_{s\in\mathcal{S}^{\textsf{BELOW}}}(b-\psi_{ij}(C^s_{ijt}))$$
				if $\min\{D_{ijt}-U^{\textsf{SP}}_{ijt},U^{\textsf{BELOW}}_{ijt}\} > 0$, where $\mathcal{S}^{\textsf{BELOW}}$ is a set of $\min\{D_{ijt}-U^{\textsf{SP}}_{ijt},U^{\textsf{BELOW}}_{ijt}\}$ carriers that are randomly chosen from $\mathbf{C}^{\textsf{BELOW}}_{ijt}$. Together with Eq~\eqref{eq:hyb-sp tight bound}, we have
				\begin{align}
					\gamma^{\textsf{HYB}}-\gamma^{\textsf{SP}}&\geq
					\begin{cases}
						\Ex[|\mathcal{S}^{\textsf{ABOVE}}|]\left(b-\Ex[\psi_{ij}(C^s_{ijt})|p^*_{ij}<C^s_{ijt}\leq\xi^*_{ij}]\right),&\text{if }\min\{D_{ijt}-U^{\textsf{SP}}_{ijt},U^{\textsf{BELOW}}_{ijt}\} > 0\\				0,&\text{otherwise}
					\end{cases} \nonumber\\
					&=\Ex\left[\min\{[D_{ijt}-U^{\textsf{SP}}_{ijt}]^+,U^{\textsf{BELOW}}_{ijt}\}\right] \left(b-\Ex[\psi_{ij}(C^s_{ijt})|p^*_{ij}<C^s_{ijt}\leq\xi^*_{ij}]\right)\label{eq:HYB-SP gap decompo}.
				\end{align}
			}
			\rr{It is easy to verify that 
				$$F_{ij}(c)=\frac{ke^{p^*_{ij}-\alpha}}{e^{p^*_{ij}-c}+ke^{p^*_{ij}-\alpha}}.$$ 
				By Eq~\eqref{eq:optimal post price instance} and \eqref{eq:reserve price instance}, we have
				\begin{align*}
					\Pr\{p^*_{ij}<C^s_{ijt}\leq\xi^*_{ij}\}&=F_{ij}(\xi^*_{ij})-F_{ij}(p^*_{ij})\\
					&=\frac{ke^{p^*_{ij}-\alpha}}{e^{p^*_{ij}-\xi^*_{ij}}+ke^{p^*_{ij}-\alpha}}-\frac{ke^{p^*_{ij}-\alpha}}{1+ke^{p^*_{ij}-\alpha}}\\
					&=\frac{1}{1/3+1}-\frac{1}{2}=\frac{1}{4}.
				\end{align*}
				Then by Eq~\eqref{eq:optimal post price instance}, \eqref{eq:psi function instance} and \eqref{eq:reserve price instance}, we can calculate that
				\begin{align}
					\Ex[\psi_{ij}(C^s_{ijt})|p^*_{ij}<C^s_{ijt}\leq\xi^*_{ij}]&=4\int_{P^*_{ij}}^{\xi^*_{ij}}\psi_{ij}(C)f_{ij}(C)dC\nonumber\\
					&=\alpha-\ln k +\ln 27.\label{eq:gap-expectation}
				\end{align}
			}			
			\rr{
				Next we give an estimation of $\min\{[D_{ijt}-U^{\textsf{SP}}_{ijt}]^+,U^{\textsf{BELOW}}_{ijt}\}$. Notice that $\Lambda_{it}$ follows a Poisson distribution with rate $2k\theta$, and $D_{ijt}$ follows a Poisson distribution with rate $\theta$. It is well known that Poisson random variables can be decomposed into summations of independent Poisson random variables. For example, if $U\sim\text{Poisson}(\lambda)$, $U_1\sim\text{Poisson}(p\lambda)$, and $U_2\sim\text{Poisson}((1-p)\lambda)$, and $U_1$ and $U_2$ are independent, we have $U=U_1+U_2$. It is easy to verify that $\Pr\{C_{ijt}^s\leq p^*_{ij}\}=F_{ij}(p^*_{ij})=1/2$, and therefore all \textsf{SP} arrivals $\sum_{j}U^{\textsf{SP}}_{ijt}$ (i.e., the newly arrived carriers whose opportunity costs are below the posted price) follows a Poisson distribution with rate $2k\theta/2=k\theta$. In this instance, each carrier chooses one of the $k$ lanes with identical probability $1/k$, and therefore we have $U_{ijt}^{\textsf{SP}}\sim\text{Poisson}(\theta)$.
			}
			
			\rr{
				Similarly, by $\Pr\{p^*_{ij}<C^s_{ijt}\leq\xi^*_{ij}\}=1/4$ we have $U_{ijt}^{\textsf{BELOW}}\sim \text{Poisson}(\theta/2)$. Along with $D_{ijt}\sim\text{Poisson}(\theta)$, we can apply Lemma \ref{lem: expected num of AUC carrier} by letting $U_1=D_{ijt}$, $U_2=U_{ijt}^{\textsf{SP}}$ and $U_3=U_{ijt}^{\textsf{BELOW}}$. It then follows that
				$$
                \Ex\left[\min\{[D_{ijt}-U^{\textsf{SP}}_{ijt}]^+,U^{\textsf{BELOW}}_{ijt}\}\right]\geq\Omega(\sqrt{\theta}).
                $$
				Together with Eq~\eqref{eq:HYB-SP gap decompo} and \eqref{eq:gap-expectation}, we have
				$$\gamma^{\textsf{HYB}}-\gamma^{\textsf{SP}}\geq\Omega(\sqrt{\theta})$$
				which completes the proof.
			}
		\Halmos \end{proof}

\end{document}